\pgfplotsset{compat=1.17} 
\newcommand*\N{\mathbb{N}}
\newcommand*\Z{\mathbb{Z}}
\newcommand*\Q{\mathbb{Q}}
\newcommand*\R{\mathbb{R}}
\newcommand*\F{\mathbb{F}}
\DeclarePairedDelimiterX\abs[1]\lvert\rvert{%
  \ifblank{#1}{\:\cdot\:}{#1}
}
\DeclarePairedDelimiterX\norm[1]\lVert\rVert{%
  \ifblank{#1}{\:\cdot\:}{#1}
}
\DeclarePairedDelimiterX{\inner}[2]{\langle}{\rangle}{%
  \ifblank{#1}{\:\cdot\:}{#1},\ifblank{#2}{\:\cdot\:}{#2}
}
\providecommand\given{}
\DeclarePairedDelimiterX\set[1]{\lbrace}{\rbrace}{
  \renewcommand*\given{\setSymbol[\delimsize]}
  #1
}
\DeclarePairedDelimiterX\free[1]{\langle}{\rangle}{
  \renewcommand\given{\nonscript\:\delimsize\vert\nonscript\:
    \mathopen{}}
  #1}
\DeclarePairedDelimiterX\open[2](){#1,#2}
\DeclarePairedDelimiterX\lopen[2](]{#1,#2}
\DeclarePairedDelimiterX\ropen[2][){#1,#2}
\DeclarePairedDelimiterX\closed[2][]{#1,#2}
\DeclareMathOperator\Span{span}
\DeclarePairedDelimiterX\gen[1]\langle\rangle{
  \ifblank{#1}{\:\cdot\:}{#1}
}
\DeclareMathOperator{\kernel}{ker} 
\DeclareMathOperator{\image}{im}
\DeclareMathOperator{\coker}{coker}
\DeclareMathOperator{\GL}{GL} 
\DeclareMathOperator{\SL}{SL} 
\DeclareMathOperator{\diag}{diag} 
\DeclareMathOperator{\Tr}{Tr} 
\DeclareMathOperator{\Hom}{Hom} 
\DeclareMathOperator{\Gal}{Gal} 
\DeclareMathOperator{\Lie}{Lie} 
\DeclareMathOperator{\Ht}{ht} 
\DeclareMathOperator{\gr}{gr}
\DeclareMathOperator{\red}{red}
\DeclareMathOperator{\sign}{sign}
\DeclareMathOperator{\Nrd}{Nrd}
\newcommand*{\lie}[1]{\mathfrak{#1}} 
\newcommand*\act{\,.\,} 
\newcommand*\edot{\:\cdot\:} 
\newcommand*\iso{\cong} 
\newcommand*\gs[1]{\mathcal{#1}}
\newcommand*\pmat[1]{\begingroup\linespread{1}\selectfont\begin{pNiceMatrix} #1 \end{pNiceMatrix}\endgroup}
\newcommand*\sO{\mathcal{O}}
\newcommand*\idm{\mathfrak{m}}
\newcommand*\basis{\mathcal{B}}
\def\paragraph{\@startsection{paragraph}{4}%
  \z@\z@{-\fontdimen2\font}%
  {\normalfont\bfseries}}
\theoremstyle{plain}
\newtheorem{theorem}{Theorem}[section]
\newtheorem{conjecture}{Conjecture}
\newtheorem{proposition}[theorem]{Proposition}
\theoremstyle{remark}
\newtheorem{remark}[theorem]{Remark}
\title{On the Mod $p$ Cohomology of Pro-$p$ Iwahori Subgroups of $\SL_{n}(\Q_{p})$}
\date{\today}
\author{Daniel Kongsgaard}
\thanks{UC San Diego, \texttt{dkongsga@ucsd.edu}}
\thanks{I would like to thank Claus Sørensen for giving me interesting number theory research topics, and for his input and ideas during my research.}
\begin{document}

\begin{abstract}
  This paper can be seen as an update to part of the author's dissertation. We study the mod $p$ cohomology of the pro-$p$ Iwahori subgroups $I$ of $\SL_{n}(\Q_{p})$ (and $\GL_{n}(\Q_{p})$) for $n=2$ and $n=3$. Here we use the spectral sequence $E_{1}^{s,t} = H^{s,t}(\lie{g},\F_{p}) \Longrightarrow H^{s+t}(I,\F_{p})$ due to Sørensen, and we do explicit calculations with an ordered basis of $I$, which gives us a basis of $\lie{g} = \F_{p} \otimes_{\F_{p}[\pi]} \gr I$ that we use to calculate $H^{s,t}(\lie{g},\F_{p})$. We note that the multiplicative spectral sequence $E_{1}^{s,t} = H^{s,t}(\lie{g},\F_{p})$ collapses on the first page by noticing that all maps on each page are necessarily trivial, and this allows us to describe the above group cohomology groups and all cup products. Finally we note some connections to cohomology of central division algebras over $\Q_{p}$ and point out some future research directions.
\end{abstract}

\maketitle

\tableofcontents

\section{Introduction}%
\label{sec:intro}

The cohomology of Lie groups has a long history. In the late forties Chevalley and Eilenberg found that $H^{*}(G,\R) \iso H^{*}(\lie{g},\R)$ for a connected compact Lie group $G$ with Lie algebra $\lie{g}$ (cf.\ \cite{Chev}), and since then there has been much research into different types of Lie group cohomology.  In particular, the mod $p$ cohomology of a connected compact real Lie group has been well understood by Kac since the eighties (cf.\ \cite{Kac}), and the continuous mod $p$ cohomology $H^*(G,\F_p)$ of an equi-$p$-valued compact $p$-adic Lie group $G$ was already described by Lazard in the sixties (cf.\ \cite{Laz}). We note here that (except for Lazard's work) $H^{*}(G,\R)$ and $H^{*}(G,\F_{p})$ indicate the cohomology of $G$ as a topological space, and not continuous group cohomology, which can be thought of as the cohomology of the classifying space $BG$.

This paper's main interest is the continuous mod $p$ cohomology $H^{*}(G,\F_{p})$ of compact $p$-adic Lie groups $G$ for some specific $G$. Since $p$-adic Lie groups are totally disconnected, working with them requires very different methods than what Chevalley and Eilenberg or Kac used for real Lie groups, and we have to follow the ideas of Lazard (see \cite{Laz}) and Serre. In particular we need a $p$-valuation on $G$ (and on the completed group algebras associated with $G$), and we work with the graded \emph{Lazard} Lie algebra $\lie{g} = \F_{p} \otimes_{\F_{p}[\pi]} \gr G$ attached to $G$. We will repeatedly use that Sørensen (in \cite{Sor}) showed that $H^{*}(\lie{g},\F_{p})$ determines $H^{*}(G,\F_{p})$ via a multiplicative spectral sequence \[ E_{1}^{s,t} = H^{s,t}(\lie{g},\F_{p}) \Longrightarrow H^{s+t}(G,\F_{p}). \] When $G$ is equi-$p$-valuable, we get that $\lie{g}$ is concentrated in a single degree, and Lazard showed that $H^{*}(G,\F_{p}) \iso \bigwedge H^{1}(\lie{g},\F_{p})$, while Sørensen showed that this also follows from the above spectral sequence. We are interested in cases where $G$ is \emph{not} equi-$p$-valuable, and we note that the spectral sequence of Sørensen allows us to work purely with $G$ and $\lie{g}$ without having to worry about the completed group algebras $\Lambda(G) = \Z_{p}\llbracket G \rrbracket$ and $\Omega(G) = \F_{p}\llbracket G \rrbracket$ (though these do show up in Sørensen's arguments).

Before describing our particular results in the following paragraph, we emphasize the following remark of Sørensen from \cite{Sor}: It is known (due to Lazard) that any compact $p$-adic Lie group contains an open equi-$p$-valuable subgroup (see \cite[Chap.~V~2.2.7.1]{Laz}), which gives the impression that the distinction between $p$-valued and equi-$p$-valued groups is somewhat nuanced, which is true for some questions. But there are many examples of naturally occurring $p$-valuable groups $G$ which are not equi-$p$-valuable, where detailed information about $H^{*}(G,\F_{p})$ is important. For example unipotent groups (i.e., the $\Z_{p}$-points of the unipotent radical of a Borel in a split reductive group), Serre's standard groups with $e>1$ as in \cite[Lem.~2.2.2]{Laz-complements}, pro-$p$ Iwahori subgroups for large enough $p$, and $1 + \idm_{D}$ where $D$ is the quaternion division algebra over $\Q_{p}$ for $p > 3$ (or more generally a central division algebra over $\Q_{p}$). Sørensen explicitly calculates $H^{*}\bigl( (1+\idm_{D})^{\Nrd = 1} , \F_{p} \bigr)$ for $p>3$ and uses it to describe $H^{*}(1+\idm_{D},\F_{p})$, and he notes that $1+\idm_{D}$ plays an important role both in number theory (in the Jacquet-Langlands correspondence for instance, see \cite{JL}) and algebraic topology, where $1+\idm_{D}$ is known as the (strict) Morava stabilizer in stable homotopy theory, and $H^{*}(1+\idm_{D},\F_{p})$ somehow controls certain localization functors with respect to Morava $K$-theory (see e.g.\ \cite{Henn}).

In this paper we focus on the case of pro-$p$ Iwahori subgroups of $\SL_{n}$ (and to a lesser extend $\GL_{n}$) over $\Q_{p}$ for $n=2$ or $n=3$, and our work will build on ideas of Lazard and Serre from their more general (but not yet finished) description of the case when $G$ is not equi-$p$-valued. We especially rely on the refinement of these ideas as described by Sørensen and Schneider in \cite{Sor} and \cite{Sch-notes}. (We note that the cases of $n=4$ and the case over quadratic extensions $F/\Q_{p}$ for $n=2$ are also investigated in \cite{thesis}.) We explicitly calculate the algebra structure of $H^{*}(I,\F_{p})$ for the pro-$p$ Iwahori subgroups $I_{\SL_{2}(\Q_{p})} \subseteq \SL_{2}(\Z_{p})$ and $I_{\SL_{3}(\Q_{p})} \subseteq \SL_{3}(\Z_{p})$, and we note that the first of these is isomorphic as algebras to $H^{*}\bigl( (1+\idm_{D})^{\Nrd = 1},\F_{p} \bigr)$ while the second is at least isomorphic as graded vector spaces (and we conjecture also as algebras). Here $D/\Q_{p}$ is the central division algebra of dimension $n^{2}$ for $n=2$ and $n=3$, respectively. We finish the paper by mentioning some future research directions and a conjecture on the connection between the mod $p$ cohomology of $(1+\idm_{D})^{\Nrd = 1}$ (resp.\ $1+\idm_{D}$) for central division algebras and $I_{\SL_{n}(\Q_{p})}$ (resp.\ $I_{\GL_{n}(\Q_{p})}$).

\subsection{Background and layout of the paper}%
\label{subsec:background-iwa}

This work can be seen as a continuation of the recent work on the mod $p$ cohomology of pro-$p$ Iwahori subgroups. E.g.\ the work by Schneider and Olivier (see \cite{SchOll-modular,SchOll-torsion,Sch-smooth}) working with pro-$p$ Iwahori-Hecke modules and the work by Koziol (see \cite{Koziol}) computing $H^{1}(I,\pi)$ as a $\mathcal{H}$-algebra (where $\mathcal{H}$ is the pro-$p$ Iwahori-Hecke algebra and $\pi$ is a mod $p$ principal series representation of $\GL_{n}(F)$ for some $p$-adic field $F$). Work by Cornut and Ray (cf.\ \cite{Generators}) finding a minimal set of topological generators of the pro-$p$ Iwahori subgroup of a split reductive group over $\Z_{p}$ is also relevant, since the number of generators can be used to find the cohomological dimension of $H^{1}(I,\F_{p})$. Overall all of this work can be seen as part of the search for a mod $p$ and $p$-adic local Langlands program.

We start the paper by introducing the setup and notation in the next subsection, after which we briefly note that $H^{*}(I_{\GL_{n}(\Q_{p})}, \F_{p})$ can be fully described if we know $H^{*}(I_{\SL_{n}(\Q_{p})}, \F_{p})$. We give an overview of the techniques we use throughout the paper in the next section. We then explicitly calculate the algebra structure of $H^{*}(I,\F_{p})$ for the pro-$p$ Iwahori subgroups $I = I_{\SL_{2}(\Q_{p})} \subseteq \SL_{2}(\Z_{p})$ and (using computer calculations) $I = I_{\SL_{3}(\Q_{p})} \subseteq \SL_{3}(\Z_{p})$, and we note that the first of these is isomorphic as algebras to $H^{*}\bigl( (1+\idm_{D})^{\Nrd = 1},\F_{p} \bigr)$ for $n=2$ and the second at least as graded vector spaces for $n=3$. We note that these calculations heavily rely on results of Sørensen and Fuks (see \cite{Sor} and \cite{Fuks}). In particular the multiplicative spectral sequence \[ E_{1}^{s,t} = H^{s,t}(\lie{g},\F_{p}) \Longrightarrow H^{s+t}(I,\F_{p}) \] of Sørensen (see \cite{Sor}) collapses on the first page in both cases, which allows us to calculate the continuous group cohomology via the Lie algebra cohomology of $\lie{g}$. Here $\lie{g} = \F_{p} \otimes_{\F_{p}[\pi]} \gr I$ is the graded Lazard Lie algebra associated with $I$.

We note that this paper in large parts are repeating material from the authors dissertation (see \cite{thesis}), but some parts have been refined since then. In particular we give a small correction (not to a calculation, but to a comment) and add new calculations in the $\SL_{3}$-case, which fully describes the cup proudcts of the cohomology now, where before only the dimensions where calculated.

\subsection{Setup and notation}%
\label{subsec:setup-iwa}

Let $p$ be an odd prime (further restricted later).

\paragraph{Field extension of \texorpdfstring{$\Q_{p}$}{Qp}.} We fix a finite extension of $F/\Q_{p}$ of degree $\ell$ with valuation ring $\sO_{F}$ and maximal ideal $\idm_{F} = (\varpi_{F}) \subseteq \sO_{F}$. Let $e = e(F/\Q_{p})$ be the ramification index and $f = f(F/\Q_{p})$ the inertia degre eof the extension $F/\Q_{p}$. Let furthermore $v$ be the valuation on $F$ for which $v(p) = 1$, and thus $v(\varpi_{F}) = \frac{1}{e}$.

\paragraph{\texorpdfstring{$\exp$}{exp} and \texorpdfstring{$\log$}{log}.} Given a finite field extension $F/\Q_{p}$ with valuation ring $\sO_{F}$ and maximal ideal $\idm_{F}$ with $p\sO_{F} = \idm_{F}^{e}$, we get by \cite[Prop.~(5.5)]{Neukirch} (noting that we will ensure that $1 > \frac{e}{p-1}$ later) that the power series
\begin{equation*}
  \exp(x) = 1 + x + \frac{x^{2}}{2!} + \frac{x^{3}}{3!} + \dotsb \quad \text{ and } \quad \log(1+z) = z - \frac{z^{2}}{2} + \frac{z^{3}}{3} - \dotsb,
\end{equation*}
are two mutually inverse isomorphisms (and homeomorphisms)
\[
  \begin{tikzcd}
    \idm_{F} \ar[r, yshift=0.7ex, "\exp"] & U_{F}^{(1)}. \ar[l, yshift=-0.7ex, "\log"]
  \end{tikzcd}
\]
Note that this implies that a $\Z_{p}$-basis of $\idm_{F}$ translates to a $\Z_{p}$-basis of $U_{F}^{(1)} = 1+\idm_{F}$ via $\exp$.

\paragraph{Big-\texorpdfstring{$O$}{O} notation.} For elements of $\sO_{F}$ we write $x = y + O(p^{r})$ if and only if $x-y \in p^{r}\sO_{F}$.

\paragraph{Matrices.} Let $E_{ij}$ denote the matrix with $1$ in the $(i,j)$ entry, and zeroes in all other entries, and write $1_{n}$ for the identity matrix in $M_{n}(F)$. Let $A = (a_{ij})$. We write $A = \diag(a_{1},\dotsc,a_{n})$ for the diagonal matrix in $M_{n}(F)$ with entries $a_{ii}=a_{i}$ in the diagonal, and $A = \diag_{i_{1},\dotsc,i_{k}}(a_{1},\dotsc,a_{k})$ for the diagonal matrix in $M_{n}(F)$ with entries $a_{i_{\ell}i_{\ell}} = a_{\ell}$ for $\ell = 1,\dotsc,k$, ones in the other diagonal entries and zeroes in all other entries. Finally, we write $A^{\top}$ for the transpose matrix of $A$.

\paragraph{Dual basis.} Let $V$ be a $\F_{p}$-vector space with basis $\basis = (e_{1},\dotsc,e_{d})$. Then we let $\basis^{*} = (e_{1}^{*},\dotsc,e_{d}^{*})$ be the dual basis of $\Hom_{\F_{p}}(V,\F_{p})$ defined by $e_{i}^{*}(e_{i}) = \delta_{ij}$, where $\delta_{ij}$ is the Kronecker delta function. Now consider two vector spaces $V$ and $W$ with bases $\basis_{V}$ and $\basis_{W}$. Given a linear map $d \colon V \to W$ with matrix $A$ when described in these bases, it is a well known fact from linear algebra that the dual map $d^{*} \colon \Hom_{\F_{p}}(W,\F_{p}) \to \Hom_{\F_{p}}(V,\F_{p})$ has matrix $A^{\top}$ when described in the dual bases $\basis_{V}^{*}$ and $\basis_{W}^{*}$. We will often use this without mention and abuse notation writing $d$ and $d^{\top}$ for these matrices.

\paragraph{Lazard theory.} For an introduction to Lazard theory see \cite{Sch}. We will let $\lie{g} = \F_{p} \otimes_{\F_p[\pi]} \gr I$ be the Lazard Lie algebra corresponding to the pro-$p$ Iwahori subgroup $I$. Furthermore, recall that a sequence of elements $(g_1,\dotsc,g_r)$ in $G$ is called an \emph{ordered basis} of $(G,\omega)$ if the map $\Z_{p}^{r} \to G$ given by $(x_{1},\dotsc,x_{r}) \mapsto g_{1}^{x_{1}} \dotsb g_{r}^{x_{r}}$ is a bijection (and hence, by compactness, a homeomorphism) and
\begin{equation*}
  \omega(g_1^{x_1}\dotsb g_r^{x_r}) = \min_{1 \leq i \leq r}(\omega(g_i)+v(x_i)) \qquad \text{for any } x_1,\dotsc,x_r\in\Z_p.
\end{equation*}

\paragraph{Algebraic groups.} We work with schemes using the functorial approach and notation described in \cite{Jan}, but we generally brush over the details. For more in depth introduction to these concepts, we refer to \cite{Con-book} and \cite{Jan}. Note that we only think of more general algebraic groups for this introductory section, and we will immediately switch to concrete groups like $G = \SL_{n}(\Q_{p})$ in the next section.

\paragraph{Fixed groups and roots.} We fix a split and connected reductive algebraic $F$-group $\gs{G}$, and consider the locally profinite group $G = \gs{G}(F)$. We then fix split maximal torus $\gs{T} \subseteq \gs{G}$ and let $T = \gs{T}(F)$. In $T$ we have a maximal compact subgroup $T^{0}$ and its Sylow pro-$p$ subgroup $T^{1}$.

Let $\Phi = \Phi(\gs{G},\gs{T})$ be the root system of $\gs{G}$ with respect to $\gs{T}$, and let $(X^{*}(T),\Phi,X_{*}(T),\Phi^{\vee})$ be the associated root datum. Fix a system of positive roots $\Phi^{+}$ and let $\Delta \subseteq \Phi^{+}$ be the simple roots. For any $\alpha \in \Phi$ we have the root subgroup $\gs{U}_{\alpha} \subseteq \gs{G}$ with Lie algebra $\Lie \gs{U}_{\alpha} =  (\Lie \gs{G})_{\alpha}$. We let $U_{\alpha} = \gs{U}_{\alpha}(F)$ and choose an isomorphism $x_{\alpha} \colon F \xrightarrow{\iso} U_{\alpha}$ such that $tx_{\alpha}(x)t^{-1} = x_{\alpha}(\alpha(t)x)$ for $t \in T$ and $x \in F$. For $r \in \Z_{\geq 0}$ we let $U_{\alpha,r} = x_{\alpha}(\idm_{F}^{r})$.

\paragraph{Coxeter number and \texorpdfstring{$p$}{p}.} Let $h$ be the Coxeter number of $\gs{G}$ and assume from now on that $p-1 > eh$. We note that this translates to $p > n+1$ for $\gs{G} = \SL_{n}$.

\paragraph{Pro-\texorpdfstring{$p$}{p} Iwahori subgroups.} We follow the definitions of \cite{SchOll-modular} with $\gs{G}, \gs{T}$ and $\gs{U}_{\alpha}$ as above. Let $I$ be the pro-$p$ Iwahori subgroup of $G$ (associated with a positive chamber as in \cite{SchOll-modular}, but we do not need the exact definition). We note by \cite[Lem.~2.1(i)]{SchOll-modular} and the proof of \cite[Lem.~2.3]{SchOll-modular} that $I$ has the following factorization: Multiplication defines a homeomorphism
\begin{equation}\label{eq:Iwahori-factor}
  \prod_{\alpha \in \Phi^{-}} U_{\alpha,1} \times T^{1} \times \prod_{\alpha \in \Phi^{+}} U_{\alpha,0} \xrightarrow{\iso} I,
\end{equation}
where the products are ordered in an arbitrarily chosen way. For a more detailed introduction to these pro-$p$ groups we refer to \cite{SchOll-modular}.

\paragraph{Pro-\texorpdfstring{$p$}{p} Iwahori subgroups of \texorpdfstring{$\GL_{n}(F)$}{GLn(F)} and \texorpdfstring{$\SL_{n}(F)$}{SLn(F)}.} In this paper, we will only work with pro-$p$ Iwahori subgroups of $\SL_{n}(F)$ (or $\GL_{n}(F)$), which simplifies the definitions. When $\gs{G} = \SL_{n}$ (or $\gs{G} = \GL_{n}$), we can always take $\gs{T}$ the diagonal maximal torus, and we can take $I$ to be the subgroup of $\gs{G}(\sO_{F})$ which is upper triangular and unipotent modulo $\varpi_{F}$. In this case we have that $U_{\alpha,1}$ for $\alpha \in \Phi^{-}$ correspond to entries below the diagonal and $U_{\alpha,0}$ for $\alpha \in \Phi^{+}$ corresponds to the entries above the diagonal.

\paragraph{\texorpdfstring{$p$}{p}-valuation on \texorpdfstring{$I$}{I}.} By a recent preprint by Lahiri and Sørensen (cf.\ \cite[Prop.~3.4]{IwaBasis}), we know (since $p-1 > eh$) that $I$ admits a $p$-valuation $\omega$ satisfying the properties:
\begin{enumerate}[(a)]
  \item $\omega$ is compatible with Iwahori factorization \eqref{eq:Iwahori-factor} of $I$ (cf.\ \cite[Def.~3.3]{IwaBasis}).
  \item $\omega(x_{\alpha}(x)) = v(x) + \frac{\Ht(\alpha)}{eh}$ where $\begin{dcases}
    x \in \idm_{F} & \text{if } \alpha \in \Phi^{-}, \\
    x \in \sO_{F} & \text{if } \alpha \in \Phi^{+}.
  \end{dcases}$
  \item $\omega(t) = \frac{1}{e} \cdot \sup\set{n \in \N : t \in T^{n}}$ for $t \in T^{1}$.
\end{enumerate}

\paragraph{Ordered basis of \texorpdfstring{$I$}{I}.} Let $\set{b_{1},\dotsc,b_{\ell}}$ be a $\Z_{p}$-basis of $\sO_{F}$, 
where $\ell = [F:\Q_{p}]$. Then
$\bigl( x_{\alpha}(b_{1}), \dotsc, x_{\alpha}(b_{\ell}) \bigr)$ is an ordered basis for $U_{\alpha,0}$ when $\alpha \in \Phi^{+}$, and $\bigl( x_{\alpha}(\varpi_{F}b_{1}), \dotsc, x_{\alpha}(\varpi_{F}b_{\ell}) \bigr)$ is an ordered basis for $U_{\alpha,1}$ when $\alpha \in \Phi^{-}$. Furthermore, when $G$ is semisimple and simply connected, we have that the simple coroots $\set{ \alpha^{\vee} : \alpha \in \Delta }$ form a $\Z$-basis of $X_{*}(T)$, and thus $\bigl( \alpha^{\vee}(\exp(\varpi_{F}b_{1})), \dotsc, \alpha^{\vee}(\exp(\varpi_{F}b_{\ell})) \bigr)_{\alpha \in \Delta}$ form an ordered basis of $T^{1}$. By \cite[Cor.~3.6]{IwaBasis}, given orderings of $\Phi^{+}$ and $\Phi^{-}$, and assuming that $G$ is semisimple and simply connected, we now get: the sequence of elements
\begin{enumerate}[$\bullet$]
  \item $\bigl( x_{\alpha}(\varpi_{F}b_{1}), \dotsc, x_{\alpha}(\varpi_{F}b_{\ell}) \bigr)_{\alpha \in \Phi^{-}}$,
  \item $\bigl( \alpha^{\vee}(\exp(\varpi_{F}b_{1})), \dotsc, \alpha^{\vee}(\exp(\varpi_{F}b_{\ell})) \bigr)_{\alpha \in \Delta}$,
  \item $\bigl( x_{\alpha}(b_{1}), \dotsc, x_{\alpha}(b_{\ell}) \bigr)_{\alpha \in \Phi^{+}}$
\end{enumerate}
forms an ordered basis of $(I,\omega)$ (with $\omega$ from the previous paragraph) which is a saturated $p$-valued group. Here, \cite{IwaBasis} notes that the $p$-valuation from the previous paragraph on this basis is given by (cf.\ \cite[Prop.~3.4]{IwaBasis})
\begin{equation}
  \label{eq:Iwa-p-val-basis}
  \begin{dcases}
    \omega\bigl( x_{\alpha}(\varpi_{F}b_{i}) \bigr) = \frac{1}{e} + \frac{\Ht(\alpha)}{eh}  & \alpha \in \Phi^{-} \\
    \omega\bigl( \alpha^{\vee}(\exp(\varpi_{F}b_{i})) \bigr) = \frac{1}{e} & \alpha \in \Delta \\
    \omega\bigl( x_{\alpha}(b_{i}) \bigr) = \frac{\Ht(\alpha)}{eh} & \alpha \in \Phi^{+}.
  \end{dcases}
\end{equation}

We note that the above argument uses that $\exp \colon \idm_{F} = (\varpi_{F}) \to U_{F}^{(1)} = 1 + \idm_{F}$ takes a basis to a basis and that $\set{ \varpi_{F}b_{1}, \dotsc, \varpi_{F}b_{\ell} }$ is a $\Z_{p}$-basis of $\idm_{F} = \varpi_{F}\sO_{F}$. 

When $\gs{G} = \SL_{n}$, we have that $\Phi = \set{ \varepsilon_{i}-\varepsilon_{j} \given 1 \leq i,j \leq n, i\neq j }$ and can take
\begin{equation*}
  \Delta = \set{\alpha_{1}=\varepsilon_{1}-\varepsilon_{2}, \alpha_{2}=\varepsilon_{2}-\varepsilon_{3}, \dotsc, \alpha_{n-1}=\varepsilon_{n-1}-\varepsilon_{n}},
\end{equation*}
where $\varepsilon_{i}$ is the map that takes a diagonal matrix to its $i$-th diagonal entry. In this case $\alpha_{i}^{\vee}(u) = \diag(1,\dotsc,1,u,u^{-1},1,\dotsc,1) = \diag_{i,i+1}(u,u^{-1})$, where the non-trivial entries are the $i$-th and $(i+1)$-th entries. Since the second non-trivial entry of these matrices are always just the inverse of the first entry, we will abuse notation and write $\diag_{i,i+1}(u) = \diag_{i,i+1}(u,u^{-1})$. This together with the above gives us the following ordered basis (in the listed order and with a chosen ordering of $\set{ (i,j) : 1 \leq i,j \leq n }$) in the case $\gs{G} = \SL_{n}$:
\begin{enumerate}[$\bullet$]
  \item $\bigl( 1_{n}+\varpi_{F}b_{1}E_{ij}, \dotsc, 1_{n}+\varpi_{F}b_{\ell}E_{ij} \bigr)_{1 \leq j < i \leq n}$,
  \item $\bigl( \diag_{i,i+1}(\exp(\varpi_{F}b_{1})), \dotsc, \diag_{i,i+1}(\exp(\varpi_{F}b_{\ell})) \bigr)_{i=1,\dotsc,n-1}$,
  \item $\bigl( 1_{n}+b_{1}E_{ij}, \dotsc, 1_{n}+b_{\ell}E_{ij} \bigr)_{1 \leq i < j \leq n}$.
\end{enumerate}
Here the $p$-valuation described in \eqref{eq:Iwa-p-val-basis} is given by
\begin{equation}\label{eq:Iwa-p-val-basis-SLn}
  \begin{dcases}
    \omega\bigl( 1_{n} + \varpi_{F}b_{m}E_{ij} \bigr) = \frac{1}{e} + \frac{j-i}{eh} & j < i, \\
    \omega\bigl( \diag_{i,i+1}(\exp(\varpi_{F}b_{m})) \bigr) = \frac{1}{e} & i = 1,\dotsc,n-1, \\
    \omega\bigl( 1_{n}+b_{m}E_{ij} \bigr) = \frac{j-i}{eh} & i < j
  \end{dcases}
\end{equation}
on the above ordered basis.

Finally note that an ordered basis of $\GL_{n}$ can be obtained from an ordered basis of $\SL_{n}$ by adding non-trivial elements of the center, which in the above corresponds to adding $\bigl(  \exp(\varpi_{F}b_{1})1_{n}, \dotsc, \exp(\varpi_{F}b_{\ell})1_{n} \bigr)$ to the middle item above (adding the root $\varepsilon_{1} + \dotsb + \varepsilon_{n}$), and the $p$-valuation on these is still $\frac{1}{e}$. In particular, $I_{\GL_{n}(\Q_{p})} = (1+p\Z_{p}) \times I_{\SL_{n}(\Q_{p})}$ (which will be discussed in more detail later).

\paragraph{Cohomology.} We denote (using the Chevalley-Eilenberg complex) the mod $p$ Lie algebra cohomology of any $\F_{p}$-Lie algebra $\lie{g}$ by $H^{\bullet}(\lie{g}, \F_{p})$, while we write $H^{\bullet}(G,\F_{p})$ for the mod $p$ continuous group cohomology of a topological group $G$. We introduce filtrations and then gradings on the cohomology and use the notation $H^{s,t} = \gr^{s}H^{s+t}$ for any type of cohomology $H$.

\paragraph{Spectral sequences.} A cohomological spectral sequence in this paper is a choice of $r_0 \in \N$ and a collection of
\begin{enumerate}[$\bullet$]
  \item $\F_{p}$-modules $E_r^{s,t}$ for each $s,t \in \Z$ and all integers $r \geq r_0$
  \item differentials $d_r^{s,t} \colon E_r^{s,t} \to E_r^{s+r,t+1-r}$ such that $d_r^2 = 0$ and $E_{r+1}$ is isomorphic to the homology of $(E_r,d_r)$, i.e.,
  \[
    E_{r+1}^{s,t} = \frac{\kernel(d_r^{s,t} \colon E_r^{s,t} \to E_r^{s+r,t+1-r})}{\image(d_r^{s-r,t+r-1} \colon E_r^{s-r,t+r-1} \to E_r^{s,t})}.
  \]
\end{enumerate}
For a given $r$, the collection $(E_r^{s,t},d_r^{s,t})_{s,t\in\Z}$ is called the $r$-th page. A spectral sequence \emph{converges} if $d_r$ vanishes on $E_r^{s,t}$ for any $s,t$ when $r\gg0$. In this case $E_r^{s,t}$ is independent of $r$ for sufficiently large $r$, we denote it by $E_{\infty}^{s,t}$ and write
  \[
    E_{r}^{s,t} \Longrightarrow E_\infty^{s+t}.
  \]
  Also, we say that the spectral sequence collapses at the $r'$-th page if $E_{r} = E_{\infty}$ for all $r \geq r'$, but not for $r < r'$. Finally, when we have terms $E_\infty^{n}$  with a natural filtration $F^\bullet E_\infty^n$ (but no natural double grading), we set $E_\infty^{s,t} = \gr^{s} E_\infty^{s+t}= F^{s}E_\infty^{s+t}/F^{s+1}E_\infty^{s+t}$.


\subsection{A note on the \texorpdfstring{$\GL_{n}$}{GLn}-case}%
\label{subsec:note-on-GLn}

Before continuing we will give a short note on this paper's connection with the mod $p$ continuous group cohomology of the pro-$p$ Iwahori subgroup of $\GL_{n}(\Q_{p})$. We do this by comparing with a similar result for subgroups of central division algebras, so let's briefly recall the setup from \cite{Sor}.

Let $D$ be the central division algebra over $\Q_{p}$ of dimension $n^{2}$ and invariant $\frac{1}{n}$. Recall the following setup from \cite[Sect.~6.3]{Sor}: The valuation $v_{p}$ on $\Q_{p}$ extends uniquely to a valuation $\tilde{v} \colon D^{\times} \to \frac{1}{n}\Z$ by the formula $\tilde{v}(x) = \frac{1}{n}v\bigl(\Nrd_{D/\Q_{p}}(x)\bigr)$, and the valuation ring $\sO_{D} = \set{ x : \tilde{v}(x) >0 }$ is the maximal compact subring of $D$. It is local with maximal ideal $\idm_{D} = \set{ x : \tilde{v}(x) > 0 }$ and residue field $\F_{D} \iso \F_{p^{n}}$. Furthermore, we can pick $\varpi_{D}$ such that $\tilde{v}(\varpi_{D}) = \frac{1}{n}$, $\idm_{D} = \varpi_{D}\sO_{D} = \sO_{D}\varpi_{D}$ and $p = \varpi_{D}^{n}$.

\begin{proposition}\label{prop:GLn-connection}
  Assume that $p>n+1$ and let $I_{\SL_{n}(\Q_{p})}$ and $I_{\GL_{n}(\Q_{p})}$ be the pro-$p$ Iwahori subgroup of $\SL_{n}(\Q_{p})$ and $\GL_{n}(\Q_{p})$, respectively. Then
  \begin{equation*}
    H^{*}\bigl( I_{\GL_{n}(\Q_{p})}, \F_{p} \bigr) \iso H^{*}\bigl( I_{\SL_{n}(\Q_{p})}, \F_{p} \bigr) \otimes \F_{p}[\varepsilon],
  \end{equation*}
  where $\F_{p}[\varepsilon]$ denotes the dual numbers ($\varepsilon^{2} = 0$). Similarly
  \begin{equation*}
    H^{*}\bigl( 1+\idm_{D} , \F_{p} \bigr) \iso H^{*}\bigl( (1+\idm_{D})^{\Nrd=1} , \F_{p} \bigr) \otimes \F_{p}[\varepsilon].
  \end{equation*}
\end{proposition}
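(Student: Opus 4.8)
The plan is to deduce both isomorphisms from a single mechanism: each left-hand group splits as a direct product of topological groups in which one factor is $1 + p\Z_{p} \iso \Z_{p}$, after which a Künneth decomposition together with the computation $H^{*}(1+p\Z_{p},\F_{p}) \iso \F_{p}[\varepsilon]$ gives the result. I would first record this one-factor computation. The group $1+p\Z_{p}$ is procyclic and, since $p$ is odd, topologically isomorphic to $\Z_{p}$ via $\log$; it is therefore equi-$p$-valued of dimension one. Lazard's theorem (or directly the facts $\operatorname{cd}_{p}\Z_{p}=1$, $H^{0}=\F_{p}$, and $H^{1}(\Z_{p},\F_{p})=\Hom_{\mathrm{cts}}(\Z_{p},\F_{p})\iso\F_{p}$, with $H^{i}=0$ for $i\geq 2$) shows that as a graded $\F_{p}$-algebra this is precisely the dual numbers $\F_{p}[\varepsilon]$, the degree-one class squaring to zero.

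For the $\GL_{n}$ statement I would invoke the decomposition $I_{\GL_{n}(\Q_{p})} = (1+p\Z_{p}) \times I_{\SL_{n}(\Q_{p})}$ already recorded in the setup, where the central factor is the copy of $1+p\Z_{p}$ obtained by adjoining the root $\varepsilon_{1}+\dotsb+\varepsilon_{n}$ (of $p$-valuation $1/e$). Because $\F_{p}$ is a field and every cohomology group in sight is finite-dimensional (guaranteed by the spectral-sequence computations carried out later in the paper), the Lyndon--Hochschild--Serre spectral sequence for the product degenerates at $E_{2}$ and yields the Künneth ring isomorphism $H^{*}(I_{\GL_{n}(\Q_{p})},\F_{p}) \iso H^{*}(I_{\SL_{n}(\Q_{p})},\F_{p}) \otimes H^{*}(1+p\Z_{p},\F_{p})$, which is the claim.

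For the division-algebra statement the only additional work is producing the analogous splitting. The reduced norm restricts to a continuous homomorphism $\Nrd \colon 1+\idm_{D} \to 1+p\Z_{p}$: an element congruent to $1$ modulo $\idm_{D}$ reduces to $1$ in $\F_{p^{n}}^{\times}$, so its reduced norm is congruent to $N_{\F_{p^{n}}/\F_{p}}(1)=1$ modulo $p$. The center $\Q_{p}^{\times}$ meets $1+\idm_{D}$ exactly in $1+p\Z_{p}$, and on a central element $z$ one has $\Nrd(z)=z^{n}$; hence the composite $1+p\Z_{p} \hookrightarrow 1+\idm_{D} \xrightarrow{\Nrd} 1+p\Z_{p}$ is the $n$-th power map. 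Since $p>n+1$ forces $\gcd(n,p)=1$, this map is an automorphism of $1+p\Z_{p}\iso\Z_{p}$ (it is multiplication by the unit $n$ after passing to $\log$), so composing its inverse with the central embedding gives a central section of $\Nrd$. Therefore $\Nrd$ is surjective and the extension $1 \to (1+\idm_{D})^{\Nrd=1} \to 1+\idm_{D} \xrightarrow{\Nrd} 1+p\Z_{p} \to 1$ splits with central image, giving $1+\idm_{D} \iso (1+\idm_{D})^{\Nrd=1} \times (1+p\Z_{p})$; the same Künneth argument then finishes the proof.

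The main obstacle, and the only place where the hypothesis $p>n+1$ is genuinely used, is constructing this splitting in the division-algebra case: one must check both that $\Nrd$ takes values in $1+p\Z_{p}$ and that the central copy of $1+p\Z_{p}$ splits it, which rests on $\Nrd$ restricting to the $n$-th power map on the center and on $n$ being a $p$-adic unit. By contrast the Künneth step is routine once finite-dimensionality is in hand, so that the tensor product is clean and no higher \emph{Tor} terms intervene.
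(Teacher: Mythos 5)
Your proposal is correct and takes essentially the same route as the paper: both proofs split the exact sequence $1 \to (1+\idm_{D})^{\Nrd=1} \to 1+\idm_{D} \xrightarrow{\Nrd} 1+p\Z_{p} \to 1$ (respectively the determinant sequence $1 \to I_{\SL_{n}(\Q_{p})} \to I_{\GL_{n}(\Q_{p})} \xrightarrow{\det} 1+p\Z_{p} \to 1$) using the central copy of $1+p\Z_{p}$ together with the fact that $p>n+1$ makes $n$ a $p$-adic unit, obtain the direct product decomposition, and conclude by K\"unneth together with $H^{*}(1+p\Z_{p},\F_{p}) \iso \F_{p}[\varepsilon]$ (Lazard). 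Your explicit verification that the composite $1+p\Z_{p} \hookrightarrow 1+\idm_{D} \xrightarrow{\Nrd} 1+p\Z_{p}$ is the $n$-th power map, hence an automorphism giving a central section, is simply an unpacking of the paper's terser statement that $(\:\cdot\:)^{1/n}$ makes sense on the central subgroup $1+p\Z_{p}$.
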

\begin{proof}
  We note that \cite[Sect.~6.3]{Sor} proves the second part of the proposition as follows.

  Consider the short exact sequence
  \[
    \begin{tikzcd}
      1 \ar[r] & (1+\idm_{D})^{\Nrd=1} \ar[r] & 1+\idm_{D} \ar[r,"\Nrd"] & 1+p\Z_{p} \ar[r] & 1,
    \end{tikzcd}
  \]
  and note that $p>n+1$ implies that $\frac{1}{n} \in \Z_{p}$, so $(\edot)^{\frac{1}{n}}$ makes sense on $1+p\Z_{p}$ viewed as a central subgroup of $1+\idm_{D}$. Thus we can write $1+\idm_{D} \iso (1+\idm_{D})^{\Nrd=1} \times (1+p\Z_{p})$, and by Künneth
  \begin{align*}
    H^{*}\bigl( 1+\idm_{D},\F_{p} \bigr) &\iso H^{*}\bigl( (1+\idm_{D})^{\Nrd=1},\F_{p} \bigr) \otimes H^{*}\bigl( 1+p\Z_{p},\F_{p} \bigr) \\
    &\iso H^{*}\bigl( (1+\idm_{D})^{\Nrd=1},\F_{p} \bigr) \otimes \F_{p}[\varepsilon].
  \end{align*}
  Here the identification $H^{*}(1+p\Z_{p},\F_{p}) \iso \F_{p}[\varepsilon]$ can be viewed as a very special case of Lazard's isomorphism.

  A similar argument applies to the first part of the proposition:

  Consider the short exact sequence
  \[
    \begin{tikzcd}
      1 \ar[r] & I_{\SL_{n}(\Q_{p})} \ar[r] & I_{\GL_{n}(\Q_{p})} \ar[r,"\det"] & 1+p\Z_{p} \ar[r] & 1,
    \end{tikzcd}
  \]
  and note that $p>n+1$ implies that $\frac{1}{n} \in \Z_{p}$, so $(\edot)^{\frac{1}{n}}$ makes sense on $1+p\Z_{p}$ viewed as a central subgroup of $I_{\GL_{n}(\Q_{p})}$. Thus we can write $I_{\GL_{n}(\Q_{p})} \iso I_{\SL_{n}(\Q_{p})} \times (1+p\Z_{p})$, and by Künneth
  \begin{align*}
    H^{*}\bigl( I_{\GL_{n}(\Q_{p})},\F_{p} \bigr) &\iso H^{*}\bigl( I_{\SL_{n}(\Q_{p})},\F_{p} \bigr) \otimes H^{*}\bigl( 1+p\Z_{p},\F_{p} \bigr) \\
    &\iso H^{*}\bigl( I_{\SL_{n}(\Q_{p})},\F_{p} \bigr) \otimes \F_{p}[\varepsilon].
  \end{align*}
  We note that here $1+p\Z_{p}$ is the center of $I_{\GL_{n}(\Q_{p})}$ since $I_{\GL_{n}(\Q_{p})} \subseteq \GL_{n}(\Z_{p})$ and $\Z_{p}^{\times} = \mu_{p-1} \times (1+p\Z_{p})$, so when discussing pro-$p$ groups we can ignore the $\mu_{p-1}$ term.
\end{proof}


\section{Techniques}%
\label{sec:tech-iwa}

In this section we will describe how to calculate information about the cohomology of a $p$-valuable group by using its Lazard Lie algebra.

Let $(G, \omega)$ be a $p$-valuable group and let $k$ be a perfect field of characteristic $p$ (later we will assume that $k = \F_{p}$). In this section we will describe how the spectral sequence
\begin{equation}\label{eq:spec-sec-tech}
  E_{1}^{s,t} = H^{s,t}(\lie{g}, k) \Longrightarrow H^{s+t}(G, k)
\end{equation}
from \cite[§6.1]{Sor} can be used to calculate information about the dimensions of $H^{n}(G,k)$ for varying $n$ and information about the cup product on $H^{*}(G,k)$. After this, we will then discuss how this applies to pro-$p$ Iwahori subgroups $I$ of $\SL_{n}(\Q_{p})$ for $n=2$ and $n=3$.

Recall that $\lie{g}$ in the above spectral sequence is given by $\lie{g} = k \otimes_{\F_{p}[\pi]} \gr G$, so to describe $\lie{g}$, we first need a good description of the $\F_{p}[\pi]$-Lie algebra $\gr G$. To get this description, suppose that we have an ordered basis $(g_{1},\dotsc,g_{d})$ of $G$, so that $\omega(g) = \min_{i = 1,\dotsc,d} \bigl( \omega(g_{i}) + v_{p}(x_{i}) \bigr)$ for $g = g_{1}^{x_{1}} \dotsb g_{d}^{x_{d}}$, and recall that $\bigl( \sigma(g_{1}),\dotsc,\sigma(g_{d}) \bigr)$ is a basis of $\gr G$, where $\sigma(g) = gG_{\omega(g)+} \in \gr G$ for $g \neq 1$.

To understand the $\F_{p}[\pi]$-Lie algebra, we need to find $[\sigma(g_{i}),\sigma(g_{j})] = \sigma\bigl( [g_{i},g_{j}] \bigr)$ for all $i,j = 1,\dotsc,d$. We recall from \cite{Sch} that $\sigma(g^{x}) = \overline{x}\pi^{v_{p}(x)} \act \sigma(g)$ for $g \in G \setminus\set{1}$ and $x \in \Z_{p}\setminus\set{0}$, where $\overline{x} = p^{-v_{p}(x)}x \pmod{p}$. Now, calculating $[g_{i},g_{j}]$ for all $i,j = 1,\dotsc,d$, we can find $x_{1},\dotsc,x_{d} \in \Z_{p}$ such that
\begin{equation*}
  [g_{i},g_{j}] = g_{1}^{x_{1}} \dotsb g_{d}^{x_{d}},
\end{equation*}
and thus
\begin{equation*}
  \bigl[ \sigma(g_{i}),\sigma(g_{j}) \bigr] = \sigma\bigl( [g_{i},g_{j}] \bigr) = \sum_{\ell=1}^{d} \overline{x}_{\ell}\pi^{v_{p}(x_{\ell})} \act \sigma(g_{\ell}).
\end{equation*}
See the proofs of \cite[Lem.~26.4 and Prop.~26.5]{Sch} for more details, and note that it is enough to only consider the $i<j$ cases.

Let $\set{\ell_{1},\dotsc,\ell_{r}}$ be the subset of $\set{1,\dotsc,d}$ such that $v_{p}(x_{\ell_{s}}) = 0$ and $v_{p}(x_{\ell}) > 0$ for $\ell \notin \set{\ell_{1},\dotsc,\ell_{r}}$, and recall that $\lie{g} = k \otimes_{\F_{p}[\pi]} \gr G$ has basis $\xi_{i} = 1 \otimes \sigma(g_{i})$. Since $\pi$ acts trivially on $k$ here, we see that
\begin{equation*}
  [\xi_{i},\xi_{j}] = \bigl[ 1\otimes\sigma(g_{i}),1\otimes\sigma(g_{j}) \bigr] = \sum_{s=1}^{r} \overline{x}_{\ell_{s}} \xi_{\ell_{s}}.
\end{equation*}
Now we have a basis $(\xi_{1},\dotsc,\xi_{d})$ of $\lie{g} = k \otimes_{\F_{p}[\pi]} \gr G$, and we know all the structure constants.

\begin{remark}\label{rem:struc-consts-lift}
  Note that the structure constants are in $\F_{p} \subseteq k$ by the above, so we can lift them to structure constants in $\set{0,1,\dotsc,p-1} \subseteq \Z$, which will be useful later. Also note that we will often (but not always) be able to lift $\lie{g}$ to a $\Z$-Lie algebra $\lie{g}_{\Z}$ with $\lie{g} = \lie{g}_{\Z} \otimes k$.
\end{remark}

Assume from now on that the Lie algebra $\lie{g}$ is unitary, i.e., that $[\xi_{i},\xi_{j}] = \sum_{\ell=1}^{d} c_{ij\ell} \xi_{\ell}$ has $\sum_{j=1}^{d} c_{ijj} = 0$. This will be the case for all Lie algebras, we will work with in this paper. Suppose furthermore that $\lie{g}$ is a graded Lie algebra, graded by finitely many positive integers, $\lie{g} = \lie{g}^{1} \oplus \lie{g}^{2} \oplus \dotsb \oplus \lie{g}^{m}$, which will also be the case for all Lie algebras we work with in this paper.

\begin{remark}\label{rem:g-Z-grading}
  Note that any $p$-valuable group $G$ admits a $p$-valuation $\omega$ with values in $\frac{1}{m}\Z$ for some $m \in \N$, cf.\ \cite[Cor.~33.3]{Sch}. Thus we can reindex the filtration of $G$ by letting $G^{i} = G_{\frac{i}{m}}$ for $i=0,1,\dotsc$, and this translates to $\gr^{i} G = \gr_{\frac{i}{m}} G$ and $\lie{g}^{i} = \lie{g}_{\frac{i}{m}}$ in general. In the cases we care about there will be no zero graded part, which allows us to make the above assumption.
\end{remark}

Then $\bigwedge^{n} \lie{g}$ is graded as well by letting
\begin{equation*}
  \gr^{j}\Bigl( \bigwedge^{n}\lie{g} \Bigr) = \bigoplus_{j_{1} + \dotsb + j_{n} = j} \lie{g}^{j_{1}} \wedge \dotsb \wedge \lie{g}^{j_{n}}.
\end{equation*}
We note that, since $\lie{g}$ is finite dimensional, there are only finitely many non-zero $\gr^{j}\bigl( \bigwedge^{n} \lie{g} \bigr)$ we are interested in, and we can find a basis with elements of the form $\xi_{i_{1}} \wedge \xi_{i_{2}} \wedge \dotsb \wedge \xi_{i_{n}}$ of each of these using our basis $(\xi_{1},\dotsc,\xi_{d})$ of $\lie{g}$.

\begin{remark}
  When ordering the basis of $\gr^{j} \bigl( \bigwedge^{n}\lie{g} \bigr) = \bigoplus_{j_{1} + \dotsb + j_{n} = j} \lie{g}^{j_{1}} \wedge \dotsb \wedge \lie{g}^{j_{n}}$, we will do it as follows. First we order the $\lie{g}^{j_{1}} \wedge \dotsb \wedge \lie{g}^{j_{n}}$ by the lexicographical order on $(j_{1},\dotsc,j_{n})$. Then we order the basis of each $\lie{g}^{j_{1}} \wedge \dotsb \wedge \lie{g}^{j_{n}}$ by the lexicographical order on equal $j_{\ell}$'s, e.g., if $\lie{g}^{1} = \Span_{k}(\xi_{1},\xi_{3})$ and $\lie{g}^{2} = \Span_{k}(\xi_{2},\xi_{4})$, then $\lie{g}^{1} \wedge \lie{g}^{1} \wedge \lie{g}^{2}$ has basis $\xi_{1} \wedge \xi_{3} \wedge \xi_{2}, \xi_{1} \wedge \xi_{3} \wedge \xi_{4}$.
\end{remark}

Assuming furthermore that $k$ is $\Z$-graded (concentrated in degree $0$), the space $\Hom_{k}\bigl( \bigwedge^{n}\lie{g}, k \bigr)$ inherits the $\Z$-grading
\[
  \Hom_{k}\Bigl( \bigwedge^n \lie{g}, k \Bigr) = \bigoplus_{s \in \Z} \Hom_{k}^s\Bigl( \bigwedge^n\lie{g}, k \Bigr),
\]
where $\Hom_{k}^s$ denotes the homogeneous $k$-linear maps of degree $s$, cf.\ \cite[Lem.~4.2]{Fossum}. We note, by \cite[Chap.~1~§3.7]{Fuks}, that these gradings on the chain and cochain complexes transfer to gradings on the homology and cohomology. We write
\begin{equation*}
  H^{s,t} = H^{s,t}(\lie{g}, k) = H^{s+t}\Bigl( \gr^s \Hom_{k}\bigl(\bigwedge^{\bullet} \lie{g}, k\bigr) \Bigr),
\end{equation*}%
and we let $e_{i_{1},i_{2},\dotsc,i_{n}}$ denote the dual of $\xi_{i_{1}}\wedge\xi_{i_{2}}\wedge\dotsb\wedge\xi_{i_{n}}$ and note that we get a dual basis of $\Hom_{k}^{s}(\bigwedge^{n}\lie{g},k)$ consisting of such elements.

\begin{remark}
  We do not spend effort to describe the homology for a few reasons. First, we need the cohomology, not the homology, in our spectral sequence. Second, by \cite[Chap.~1~§3.6]{Fuks}, we have a version of Poincaré duality for Lie algebra cohomology, i.e., $H^{n}(\lie{g},k) \iso H_{n-d}(\lie{g},k)$, so we can easily describe the homology using the cohomology. Third, we will care about the cup product later, and we do not get a nice product in homology, cf.\ \cite[Chap.~1~§3.2]{Fuks}.
\end{remark}

Now we have bases of all $\gr^{j}\bigl( \bigwedge^{n}\lie{g} \bigr)$, and by \cite[Chap.~1~§3.7]{Fuks} we get graded chain complexes that we can use to find the homology of $\lie{g}$. Here the boundary maps of
\[
  \begin{tikzcd}
    \cdots \ar[r,"d_{4}"] & \bigwedge^{3}\lie{g} \ar[r,"d_{3}"] & \bigwedge^{2}\lie{g} \ar[r,"d_{2}"] & \lie{g} \ar[r,"d_{1}"] & k \ar[r] &  0,
  \end{tikzcd}
\]
are given by
\begin{equation*}
  d_{n}(x_{1} \wedge \dotsb \wedge x_{n}) = \sum_{i<j} (-1)^{i+j}[x_{i},x_{j}]\wedge x_{1} \wedge \dotsb \wedge \widehat{x}_{i} \wedge \dotsb \wedge \widehat{x}_{j} \wedge \dotsb \wedge x_{n},
\end{equation*}
and the coboundary maps
\[
  \begin{tikzcd}
    \cdots & \ar[l,"\partial_{3}"'] \Hom_{k}\Bigl( \bigwedge^{2}\lie{g}, k \Bigr) & \ar[l,"\partial_{2}"'] \Hom_{k}(\lie{g},k) & \ar[l,"\partial_{1}"'] \Hom_{k}(k, k) = k &  \ar[l] 0,
  \end{tikzcd}
\]
are the dual maps of the boundary maps (see \cite[Chap.~1~§3.1]{Fuks} for more details). Thus, if we use the dual basis, of our basis of $\bigwedge^{n}\lie{g}$, in $\Hom_{k}\bigl( \bigwedge^{n} \lie{g},k \bigr)$, we get that $\partial_{n} = d_{n}^{\top}$ as matrices, where $(\edot)^{\top}$ is the transpose. Since we know bases and linear maps explicitly, and we know that the linear maps restrict to graded linear maps, we can now find matrices describing all graded linear maps
\begin{equation*}
  \gr^{j} \Bigl( \bigwedge^{n}\lie{g} \Bigr) \to \gr^{j} \Bigl( \bigwedge^{n-1}\lie{g} \Bigr),
\end{equation*}
and thus we can find matrices describing all graded linear maps
\begin{equation*}
  \Hom_{k}^{s}\Bigl( \bigwedge^{n-1}\lie{g},k \Bigr) \to \Hom_{k}^{s}\Bigl( \bigwedge^{n}\lie{g}, k \Bigr),
\end{equation*}
which allows us to exactly calculate the cohomology and find a basis for each cohomology space.

\begin{remark}
  We note here that when the structure constants can be lifted to $\Z$, it clear that the above matrices describing the (co)boundary maps can be lifted to $\Z$ by looking at the formula for the boundary maps. This allows us to do calculations over $\Z$ and just reduce mod $p$ in the end.
\end{remark}

Suppose now that we have found the cohomology $H^{s,t} = H^{s,t}(\lie{g},k)$ for all $s,t$. To get information about the (continuous) group cohomology $H^{n}(G,k)$, we need to use the multiplicative spectral sequence \eqref{eq:spec-sec-tech}, i.e.,
\begin{equation*}
  E_{1}^{s,t} = H^{s,t}(\lie{g}, k) \Longrightarrow H^{s+t}(G,k)
\end{equation*}
and information about spectral sequences in general. We already know from \cite{Sor} that this spectral sequence collapses at a finite page, and one can hope that it will actually collapse at the first page. One way we can verify that the spectral sequence (in certain cases) collapses at the first page, is by considering the exact bidegrees of the differentials. We know that the differentials $d_{r}^{s,t} \colon E_{r}^{s,t} \to E_{r}^{s+r,t+1-r}$ have bidegree $(r,1-r)$, so if the non-zero modules on the first pages is distributed in such a way that all differentials $d_{r}^{s,t}$ are trivial for all $s,t$ and $r\geq1$, then we can be sure that the spectral sequence collapses on the first page. This will become clearer when we look at examples in the next few sections.

\begin{remark}
  We note here that the spectral sequence will collapse on the first page for the pro-$p$ Iwahori subgroups of $\SL_{2}(\Q_{p})$ and $\SL_{3}(\Q_{p})$ by the above argument, but in \cite{thesis} it is shown that this argument does not work for the pro-$p$ Iwahori subgroup of $\SL_{4}(\Q_{p})$.
\end{remark}

Now note, by \cite[Chap.~1~§3.7]{Fuks}, that the cup product is compatible with the gradings on the Lie algebra cohomology, in particular
\begin{equation}
H^{s,t} \cup H^{s',t'} \subseteq H^{s+s',t+t'},\label{eq:Hst-Hst-subset}
\end{equation}
where $H^{s,t} = H^{s,t}(\lie{g},k)$. Thus, since the spectral sequence is multiplicative, we can describe the cup product on $H^{*}(G,k)$ when the spectral sequence collapses on the first page. Some cup products will be trivially zero by \eqref{eq:Hst-Hst-subset}, and for the rest of the cup products, we can calculate them with an explicit basis using the following.

For $f \in \Hom_{k}\bigl( \bigwedge^{p}\lie{g}, k \bigr)$ and $g \in \Hom_{k}\bigl( \bigwedge^{q}\lie{g}, k \bigr)$, we know from \cite[Chap.~XIII, Sect.~8]{CartanHomAlg}, that the cup product in cohomology is induced by: $f \cup g \in \Hom_{k}\bigl( \bigwedge^{p+q}\lie{g}, k \bigr)$ defined by
\begin{equation}
  \label{eq:cup-prod-def}
  (f \cup g)(x_{1} \wedge \dotsb \wedge x_{p+q})  = \sum_{\mathclap{\substack{ \sigma \in S_{p+q} \\ \sigma(1) < \dotsb < \sigma(p) \\ \sigma(p+1) < \dotsb < \sigma(p+q) }}} \sign(\sigma) f(x_{\sigma(1)} \wedge \dotsb \wedge x_{\sigma(p)}) g(x_{\sigma(p+1)} \wedge \dotsb \wedge x_{\sigma(p+q)}).
\end{equation}
With our basis and dual basis this clearly reduces to
\begin{equation}
  \label{eq:cup-prod-basis}
  (e_{i_{1},\dotsc,i_{p}} \cup e_{j_{1},\dotsc,j_{q}})(\xi_{k_{1}} \wedge \dotsb \wedge \xi_{k_{p+q}}) = \sign(\sigma)\bm{1}_{\set{i_{1},\dotsc,i_{p}}\cup\set{j_{1},\dotsc,j_{q}} = \set{k_{1},\dotsc,k_{p+q}}},
\end{equation}
where $\bm{1}_{B}$ is the indicator function that is $1$ if $B$ is true and $0$ otherwise and $\sigma \in S_{p+q}$ is the permutation with $\sigma(k_{\ell}) = i_{\ell}$ and $\sigma(k_{p+\ell}) = j_{\ell}$. (Here we assume that the wedge product $\xi_{k_{1}} \wedge \dotsc \wedge \xi_{k_{p+q}}$ are already ordered in the lexicographical order we are working with.) This equation clearly simplifies calculations a lot, and as long as we work with explicit bases for everything, it makes it quite possible to calculate cup products explicitly. In particular, we will by the above have an explicit basis $( v_{1}^{(s,t)},\dotsc, v_{m}^{(s,t)} )$ of each $H^{s,t}$, where each $v_{m}^{(s,t)}$ can be lifted to a linear combination of the $e_{i_{1},\dotsc, i_{n}}$'s, which makes the calculations easier.

Finally note that even when the spectral sequence does not (necessarily) collapse on the first page, we can still get some bounds on the dimensions of $H^{n}(G,k)$ that will allow us to draw some conclusions about the structure of $H^{*}(G,k)$. This is investigated in more detail in \cite{thesis}, but we will not explore it in this paper.

In the following two sections of this paper we will focus on using the techniques described in this section to get as much as possible information about the cohomology of $H^{*}(I,k)$, where $I$ is the pro-$p$ Iwahori subgroup of $\SL_{n}(\Q_{p})$ for $n=2$ or $n=3$.


\section{\texorpdfstring{$H^{*}(I,\F_{p})$}{H*(I,Fp)} for the pro-\texorpdfstring{$p$}{p} Iwahori \texorpdfstring{$I \subseteq \SL_{2}(\Z_{p})$}{I in SL2(Zp)}}%
\label{sec:Iwa-SL2}

In this section we will describe the continuous group cohomology of the pro-$p$ Iwahori subgroup $I$ of $\SL_{2}(\Q_{p})$.

When $I$ is the pro-$p$ Iwahori subgroup in $\SL_{2}(\Q_{p})$, we know by \Cref{subsec:setup-iwa} that we can take it to be of the form
\begin{equation*}
  I = \pmat{1+p\Z_{p} & \Z_{p} \\ p\Z_{p} & 1+p\Z_{p}}^{\!\!\det = 1} \subseteq \SL_{2}(\Z_{p}).
\end{equation*}
In this case, an obvious guess for an ordered basis (using that $(1+p)^{\Z_{p}} = 1+p\Z_{p}$) is
\begin{align*}
  g_{1}' &= \pmat{1 & 0 \\ p & 1}, & g_{2}' &= \pmat{1+p & 0 \\ 0 & (1+p)^{-1}}, & g_{3}' &= \pmat{1 & 1 \\ 0 & 1}.
\end{align*}
Because we want to be able to describe the commutators using this ordered basis, we will at one point need to solve for $x$ in equation of the form $(1+p)^{x} = y$. For this reason a better choice of ordered basis is (as described in \Cref{subsec:setup-iwa})
\begin{align}
  \label{eq:gis-SL2}
  g_{1} &= \pmat{1 & 0 \\ p & 1}, & g_{2} &= \pmat{\exp(p) & 0 \\ 0 & \exp(-p)}, & g_{3} &= \pmat{1 & 1 \\ 0 & 1}.
\end{align}
In this case the above equations to solve translate to solving for $x$ in $\exp(px) = y$, which we can easily do, as $x = \frac{1}{p}\log(y)$.

\subsection{Finding the commutators \texorpdfstring{$[\xi_{i},\xi_{j}]$}{[xi-i,xi-j]}}%
\label{subsec:non-id-xi_ij-SL2}

Now write
\begin{equation}
  \label{eq:gixi-SL2}
  g_{1}^{x_{1}}g_{2}^{x_{2}}g_{3}^{x_{3}} = \pmat{\exp(px_{2}) & x_{3}\exp(px_{2}) \\ px_{1}\exp(px_{2}) & px_{1}x_{3}\exp(px_{2}) + \exp(-px_{2})} = \pmat{a_{11} & a_{12} \\ a_{21} & a_{22}}.
\end{equation}
Furthermore, write $g_{ij} = [g_{i},g_{j}]$ and $\xi_{ij} = [\xi_{i},\xi_{j}]$. Then we are ready to find $x_{1},x_{2},x_{3}$ such that $g_{ij} = g_{1}^{x_{1}}g_{2}^{x_{2}}g_{3}^{x_{3}}$ for different $i<j$. (In the following we use that $\frac{1}{1-p} = 1 + p + p^{2} + \dotsb$ and $\log(1-p) = -p - \frac{p^{2}}{2} - \frac{p^{3}}{3} - \dotsb$.)

We now list all non-identitiy commutators $g_{ij} = [g_{i},g_{j}]$ and find $\xi_{ij} = [\xi_{i},\xi_{j}]$ based on these. (For $g_{ij} = 1_{2}$ it is clear that $x_{1} = x_{2} = x_{3} = 0$, and thus $\xi_{ij} = 0$.)

\begin{description}
  \item[$g_{12} = \pmat{ 1 & 0 \\ p\bigl( 1 - \exp(-2p) \bigr) & 1 }$] Comparing $g_{12}$ with \eqref{eq:gixi-SL2}, we see that $x_{2} = x_{3} = 0$. This leaves $a_{21} = px_{1} = p\bigl( 1 - \exp(-2p) \bigr) = 2p^{2} + O(p^{3})$, which implies that $x_{1} = 2p + O(p^{2})$. Hence $\sigma(g_{12}) = 2\pi \act \sigma(g_{1})$, which implies that $\xi_{12} = 0$.

  \item[$g_{13} = \pmat{ 1-p & p \\ -p^{2} & 1+p+p^{2} }$] Comparing $g_{13}$ with \eqref{eq:gixi-SL2}, we see that
        \begin{align*}
          a_{11} &= \exp(px_{2}) = 1-p, \\
          a_{12} &= x_{3}\exp(px_{2}) = x_{3}(1-p) = p, \\
          a_{21} &= px_{1}\exp(px_{2}) = px_{1}(1-p) = -p^{2},
        \end{align*}
        and thus
        \begin{align*}
          x_{2} &= \dfrac{1}{p}\log(1-p) = \dfrac{1}{p}\bigl( (-p) + O(p^{2}) \bigr) = -1 + O(p), \\
          x_{3} &= \dfrac{p}{1-p} = p + O(p^{2}), \\
          x_{1} &= \dfrac{-p^{2}}{p(1-p)} = -p + O(p^{2}).
        \end{align*}
        Hence $\sigma(g_{13}) = -\pi \act \sigma(g_{1}) - \sigma(g_{2}) - \pi \act \sigma(g_{3})$, which implies that $\xi_{13} = -\xi_{2}$.

  \item[$g_{23} = \pmat{ 1 & \exp(2p)-1 \\ 0 & 1 }$] Comparing $g_{23}$ with \eqref{eq:gixi-SL2}, we see that $x_{1} = x_{2} = 0$. This leaves $a_{12} = x_{3} = \exp(2p)-1 = 2p + O(p^{2})$. Hence $\sigma(g_{23}) = 2\pi \act \sigma(g_{3})$, which implies that $\xi_{23} = 0$.
\end{description}

To clarify, we found that
\begin{align*}
  \sigma(g_{12}) &= 2\pi \act \sigma(g_{1}), \\
  \sigma(g_{13}) &= -\pi \act \sigma(g_{1}) - \sigma(g_{2}) - \pi \act \sigma(g_{3}), \\
  \sigma(g_{23}) &= 2\pi \act \sigma(g_{3}),
\end{align*}
and recalling that $\xi_{i} = 1 \otimes \sigma(g_{i})$ in $\F_{p} \otimes_{\F_{p}[\pi]} \gr G$, where $\pi$ acts trivially on $\F_{p}$, we get that
\begin{align}\label{eq:xi_ij-SL2}
  \xi_{12} &= 0, & \xi_{13} &= -\xi_{2}, & \xi_{23} &= 0,
\end{align}
where $\xi_{ij} = [\xi_{i},\xi_{j}]$.

\subsection{Describing the graded chain complex, \texorpdfstring{$\gr^{j}\bigl(\bigwedge^{n}\lie{g}\bigr)$}{grj(wedge-n g)}}%
\label{subsec:graded-complex-SL2}

Looking at \eqref{eq:Iwa-p-val-basis-SLn} (with $e=1$ and $h=2$), we see that
\begin{align*}
  \omega(g_{1}) &= 1-\frac{1}{2} = \frac{1}{2}, & \omega(g_{2}) &= 1, & \omega(g_{3}) &= \frac{1}{2}.
\end{align*}
Hence $\lie{g}^{1} = \lie{g}_{\frac{1}{2}} = \Span_{\F_{p}}(\xi_{1},\xi_{3})$ and $\lie{g}^{2} = \lie{g}_{1} = \Span_{\F_{p}}(\xi_{2})$, cf.\ \Cref{rem:g-Z-grading}.


Now we are ready to describe the graded chain complex
\begin{equation*}
  \gr^{j}\Bigl( \bigwedge^{n}\lie{g} \Bigr) = \bigoplus_{j_{1} + \dotsb + j_{n} = j} \lie{g}^{j_{1}} \wedge \dotsb \wedge \lie{g}^{j_{n}}
\end{equation*}
and its bases. We list the grading of $\bigwedge^{n}\lie{g}$ for all $n$.

\begin{description}
  \item[$n=0$]
        \begin{equation*}
          \gr^{j}(\F_{p}) =
          \begin{dcases}
            \F_{p} & j=0, \\
            0 & \text{otherwise.}
          \end{dcases}
        \end{equation*}
        Bases:
        \begin{align*}
          \F_{p}: \quad & 1.
        \end{align*}

   \item[$n=1$]
        \begin{equation*}
          \gr^{j}(\lie{g}) =
          \begin{dcases}
            \lie{g}^{2} & j=2, \\
            \lie{g}^{1} & j=1, \\
            0          & \text{otherwise.}
          \end{dcases}
        \end{equation*}
        Bases:
        \begin{align*}
          \lie{g}^{2}: \quad & \xi_{2}, \\
          \lie{g}^{1}: \quad & \xi_{1}, \xi_{3}.
        \end{align*}

   \item[$n=2$]
        \begin{equation*}
          \gr^{j}\Bigl( \bigwedge^{2}\lie{g} \Bigr) =
          \begin{dcases}
            \lie{g}^{1} \wedge \lie{g}^{2} & j=3, \\
            \lie{g}^{1} \wedge \lie{g}^{1} & j=2, \\
            0                             & \text{otherwise.}
          \end{dcases}
        \end{equation*}
        Bases:
        \begin{align*}
          \lie{g}^{1} \wedge \lie{g}^{2}: \quad & \xi_{1} \wedge \xi_{2}, \xi_{3} \wedge \xi_{2}, \\
          \lie{g}^{1} \wedge \lie{g}^{1}: \quad & \xi_{1} \wedge \xi_{3}.
        \end{align*}

  \item[$n=3$]
        \begin{equation*}
          \gr^{j}\Bigl( \bigwedge^{3}\lie{g} \Bigr) =
          \begin{dcases}
            \lie{g}^{1} \wedge \lie{g}^{1} \wedge \lie{g}^{2} & j=4, \\ 0                                                & \text{otherwise.}
          \end{dcases}
        \end{equation*}
        Bases:
        \begin{align*}
          \lie{g}^{1} \wedge \lie{g}^{1} \wedge \lie{g}^{2}: \quad & \xi_{1} \wedge \xi_{3} \wedge \xi_{2}.
        \end{align*}

   \item[$n\geq4$]
        \begin{equation*}
          \gr^{j}\Bigl( \bigwedge^{n}\lie{g} \Bigr) = 0 \text{ for all } j.
        \end{equation*}
\end{description}

\begin{table}[ht]
  \centering
  \caption{Dimensions of \texorpdfstring{$\gr^{j}\bigl( \bigwedge^{n} \lie{g} \bigr)$}{grade j of n wedges of g}  for the \texorpdfstring{$I \subseteq \SL_{2}(\Z_{p})$}{I in SL2(Zp)} case.}
  \label{tab:graded-dims-SL2}
  $\begin{NiceArray}{*{6}{c}}[hvlines]
    \diagbox{n}{j} & 0 & 1 & 2 & 3 & 4 \\
    0 & 1 \\
    1 & & 2 & 1 \\
    2 & & & 1 & 2 \\
    3 & & & & & 1
  \end{NiceArray}$
\end{table}

We collect the above information about the dimensions of the chain complex of $\lie{g}$ in \Cref{tab:graded-dims-SL2}, and note that we only need to consider non-zero (non-empty) entries of the table, when we calculate  $H^{s,t} = H^{s,n-s}$ (where $H^{s,t} = H^{s,t}(\lie{g},\F_{p})$). Also, recalling that
\begin{equation*}
  \Hom_{\F_{p}}\Bigl( \bigwedge^{n}\lie{g}, \F_{p} \Bigr) = \bigoplus_{s \in \Z} \Hom_{\F_{p}}^{s}\Bigl( \bigwedge^{n}\lie{g}, \F_{p} \Bigr),
\end{equation*}
we see that, with $j=-s$, we get the same table for dimensions of the graded hom-spaces in the cochain complex.

\subsection{Finding the graded Lie algebra cohomology, \texorpdfstring{$H^{s,t}(\lie{g},\F_{p})$}{H(s,t)(g,\F_{p})}}%
\label{subsec:graded-coh-SL2}

We will now go through all different graded chain complexes one by one, using that $\gr^{j}$ in the chain complex corresponds to $\gr^{s}$ with $s = -j$ in the cochain complex. We note that the graded chain complex corresponds to vertical downwards arrows in \Cref{tab:graded-dims-SL2}, while the cochain complex corresponds to vertical upwards arrows. Finally, we reiterate that $H^{n} = H^{n}(\lie{g},\F_{p})$ and $H^{s,t} = H^{s,t}(\lie{g},\F_{p})$ in the following.

In grade $0$ we have the chain complex
\[
  \begin{tikzcd}
    0 \ar[r] & \F_{p} \ar[r] & 0,
  \end{tikzcd}
\]
which gives us the grade $0$ cochain complex
\[
  \begin{tikzcd}
    0 & \ar[l] \Hom_{\F_{p}}^{0}(\F_{p},\F_{p}) & \ar[l] 0.
  \end{tikzcd}
\]
So $H^{0} = H^{0,0}$ with $\dim H^{0,0} = 1$.

In grade $1$ we have the chain complex
\[
  \begin{tikzcd}
    0 \ar[r] & \lie{g}^{1} \ar[r] & 0,
  \end{tikzcd}
\]
which gives us the grade $-1$ cochain complex
\[
  \begin{tikzcd}
    0 & \ar[l] \Hom_{\F_{p}}^{-1}(\lie{g},\F_{p}) & \ar[l] 0.
  \end{tikzcd}
\]
So $\dim H^{-1,2} = 2$ by \Cref{tab:graded-dims-SL2}.

In grade $2$ we have the chain complex
\[
  \begin{tikzcd}[ampersand replacement=\&]
    0 \ar[r] \& \lie{g}^{1} \wedge \lie{g}^{1} \ar[r, "{(1)}" {yshift=2pt}] \& \lie{g}^{2} \ar[r] \& 0,
  \end{tikzcd}
\]
since
\begin{align*}
  \lie{g}^{1} \wedge \lie{g}^{1} &\to \lie{g}^{2} \\
  \xi_{1} \wedge \xi_{3} &\mapsto -[\xi_{1},\xi_{3}] = \xi_{2}.
\end{align*}
This gives us the grade $-2$ cochain complex
\[
  \begin{tikzcd}[ampersand replacement=\&]
    0 \& \ar[l] \Hom_{\F_{p}}^{-2}\bigl( \bigwedge^{2} \lie{g}, \F_{p} \bigr) \& \ar[l, "{(1)}"' {yshift=2pt}] \Hom_{\F_{p}}^{-2}(\lie{g},\F_{p}) \& \ar[l] 0.
  \end{tikzcd}
\]
So with $d = (1)$, and comparing with \Cref{tab:graded-dims-SL2},
\begin{align*}
  \dim H^{-2,3} &= \dim \kernel(d) = 0, \\
  \dim H^{-2,4} &= \dim \coker(d) = 0.
\end{align*}

In grade $3$ we have the chain complex
\[
  \begin{tikzcd}
    0 \ar[r] & \lie{g}^{1} \wedge \lie{g}^{2} \ar[r] & 0,
  \end{tikzcd}
\]
which gives us the grade $-3$ cochain complex
\[
  \begin{tikzcd}
    0 & \ar[l] \Hom_{\F_{p}}^{-3}\bigl( \bigwedge^{2}\lie{g}, \F_{p} \bigr) & \ar[l] 0.
  \end{tikzcd}
\]
So $\dim H^{-3,5} = 2$ by \Cref{tab:graded-dims-SL2}.

In grade $4$ we have the chain complex
\[
  \begin{tikzcd}
    0 \ar[r] & \lie{g}^{1} \wedge \lie{g}^{1} \wedge \lie{g}^{2}  \ar[r] & 0,
  \end{tikzcd}
\]
which gives us the grade $-4$ cochain complex
\[
  \begin{tikzcd}
    0 & \ar[l] \Hom_{\F_{p}}^{-4}\bigl( \bigwedge^{3}\lie{g}, \F_{p} \bigr) & \ar[l] 0.
  \end{tikzcd}
\]
So $\dim H^{-4,7} = 1$ by \Cref{tab:graded-dims-SL2}.

\begin{table}[ht]
  \centering
  \caption{Dimensions of \texorpdfstring{$E_{1}^{s,t} = H^{s,t}(\lie{g},\F_{p})$}{E1(s,t) = H(s,t)(g,Fp)} for the \texorpdfstring{$I \subseteq \SL_{2}(\Z_{p})$}{I in SL2(Zp)} case.}
  \label{tab:graded-coh-dims-SL2}
  \renewcommand{\arraystretch}{1.5}
  $\begin{NiceArray}{*{6}{c}}[hvlines, columns-width=auto]
    \diagbox{t}{s} & 0 & -1 & -2 & -3 & -4 \\
    0 & 1 \\
    1 & \\
    2 & & 2 \\
    3 & \\
    4 & \\
    5 & & & & 2 \\
    6 & \\
    7 & & & & & 1
  \end{NiceArray}$
  \renewcommand{\arraystretch}{1}
\end{table}

Altogether, we see that
\begin{equation}
  \label{eq:Hn-to-Hst-SL2}
  \begin{aligned}
    H^{0} &= H^{0,0}, \\
    H^{1} &= H^{-1,2}, \\
    H^{2} &= H^{-3,5}, \\
    H^{3} &= H^{-4,7},
  \end{aligned}
\end{equation}
with dimension as described in \Cref{tab:graded-coh-dims-SL2}.

\subsection{Describing the group cohomology, \texorpdfstring{$H^{n}(I,\F_{p})$}{Hn(I,\F_{p})}}%
\label{subsec:group-coh-SL2}

We note that all differentials $d_{r}^{s,t} \colon E_{r}^{s,t} \to E_{r}^{s+r,t+1-r}$ in \Cref{tab:graded-coh-dims-SL2} has bidegree $(r,1-r)$, i.e., they are all below the $(r,-r)$ arrow going $r$ to the left and $r$ up in the table, where $r \geq 1$. Looking at \Cref{tab:graded-coh-dims-SL2}, this clearly means that all differentials for $r \geq 1$ are trivial, and thus the spectral sequence collapses on the first page. Hence $H^{s,t}(\lie{g},\F_{p}) = E_{1}^{s,t} \iso E_{\infty}^{s,t} = \gr^{s} H^{s+t}(I,\F_{p})$, and by \eqref{eq:Hn-to-Hst-SL2} and \Cref{tab:graded-coh-dims-SL2} we get that
\begin{equation}
  \label{eq:dim-HnI-SL2}
  \dim H^{n}(I,\F_{p}) =
  \begin{dcases}
    1 & n=0, \\
    2 & n=1, \\
    2 & n=2, \\
    1 & n=3.
  \end{dcases}
\end{equation}

Recalling that the spectral sequence is multiplicative, we also note, by \Cref{tab:graded-coh-dims-SL2}, that $H^{s,t} \cup H^{s',t'} \subseteq H^{s+s',t+t'}$ implies that the cup products
\begin{equation*}
  \gr^{s} H^{n}(I,\F_{p}) \otimes \gr^{s'} H^{n'}(I,\F_{p}) \to \gr^{s+s'} H^{n+n'}(I,\F_{p})
\end{equation*}
are trivial, except for the obvious ones with $H^{0}(I,\F_{p})$ and $H^{1} \otimes H^{2} \to H^{3}$. We now want to describe the cup product $H^{1} \otimes H^{2} \to H^{3}$.

Let $e_{i_{1},\dotsc,i_{m}} = (\xi_{i_{1}} \wedge \dotsb \wedge \xi_{i_{m}})^{*}$ be the element of the dual basis of $\Hom_{\F_{p}}\bigl( \bigwedge^{m}\lie{g},\F_{p} \bigr)$ corresponding to $\xi_{i_{1}} \wedge \dotsb \wedge \xi_{i_{m}}$ in the basis of $\bigwedge^{m}\lie{g}$. Looking at the cochain complexes and descriptions of the maps above together with the known bases of the graded chain complexes, we get the following precise descriptions of the of the graded cohomology spaces $H^{s,t} = H^{s,t}(\lie{g},\F_{p})$
\begin{equation}\label{eq:Hst-spaces-SL2}
  \begin{aligned}
    H^{1} = H^{-1,2} &= \Span_{\F_{p}}\bigl( v_{1}^{(-1,2)}, v_{2}^{(-1,2)} \bigr), \\
    H^{2} = H^{-3,5} &= \Span_{\F_{p}}\bigl( v_{1}^{(-3,5)}, v_{2}^{(-3,5)} \bigr), \\
    H^{3} = H^{-4,7} &= \Span_{\F_{p}}\bigl( v_{1}^{(-4,7)} \bigr),
  \end{aligned}
\end{equation}
where $v_{1}^{(-1,2)}$ lifts to $e_{1}$ in $\Hom_{\F_{p}}^{-1}(\lie{g},\F_{p})$ and $v_{2}^{(-1,2)}$ lifts to $e_{3}$ in $\Hom_{\F_{p}}^{-1}(\lie{g},\F_{p})$, $v_{1}^{(-3,5)}$ lifts to $e_{1,2}$ in $\Hom_{\F_{p}}^{-3}(\bigwedge^{2}\lie{g},\F_{p})$ and $v_{2}^{(-3,5)}$ lifts to $e_{3,2}$ in $\Hom_{\F_{p}}^{-3}(\bigwedge^{2}\lie{g},\F_{p})$, and $v_{1}^{(-4,7)}$ lifts to $e_{1,3,2}$ in $\Hom_{\F_{p}}^{-4}(\bigwedge^{3}\lie{g},\F_{p})$.

\begin{remark}
  Generally the cohomology spaces will be much harder to describe and we would get $v_{1}^{(s,t)},\dotsc,v_{m}^{(s,t)}$ for a basis of $H^{s,t}$, where each $v_{j}^{(s,t)}$ lifts to a linear combination of elements of the form $e_{i_{1},\dotsc,i_{k}}$. This will be the case in \Cref{sec:Iwa-SL3}.
\end{remark}

For $f \in \Hom_{\F_{p}}\bigl( \bigwedge^{p}\lie{g}, \F_{p} \bigr)$ and $g \in \Hom_{\F_{p}}\bigl( \bigwedge^{q}\lie{g}, \F_{p} \bigr)$, we recall from \eqref{eq:cup-prod-def} that the cup product in cohomology is induced by: $f \cup g \in \Hom_{\F_{p}}\bigl( \bigwedge^{p+q}\lie{g}, \F_{p} \bigr)$ defined by
\begin{equation*}
  (f \cup g)(x_{1} \wedge \dotsb \wedge x_{p+q})  = \sum_{\mathclap{\substack{ \sigma \in S_{p+q} \\ \sigma(1) < \dotsb < \sigma(p) \\ \sigma(p+1) < \dotsb < \sigma(p+q) }}} \sign(\sigma) f(x_{\sigma(1)} \wedge \dotsb \wedge x_{\sigma(p)}) g(x_{\sigma(p+1)} \wedge \dotsb \wedge x_{\sigma(p+q)}).
\end{equation*}
\begin{remark}
  We could just use \eqref{eq:cup-prod-basis} here, but we instead use the below as an example to clarify how we get \eqref{eq:cup-prod-basis} from \eqref{eq:cup-prod-def}.
\end{remark}
So, when finding
\[
  \begin{tikzcd}
    H^{-1,2} \otimes H^{-3,5} \ar[r,"\cup"] & H^{-4,7},
  \end{tikzcd}
\]
we need to calculate $e_{1} \cup e_{1,2}$, $e_{1} \cup e_{3,2}$, $e_{3} \cup e_{1,2}$ and $e_{3} \cup e_{3,2}$ on the basis $\basis = (\xi_{1} \wedge \xi_{3} \wedge \xi_{2})$ of $\gr^{4} \bigwedge^{3} \lie{g}$.

We first note that \eqref{eq:cup-prod-def} simplifies to
\begin{equation*}
  (e_{i} \cup e_{j,k})(x_{1} \wedge x_{2} \wedge x_{3}) = \sum_{\substack{\sigma \in S_{3} \\ \sigma(2) < \sigma(3)}} \sign(\sigma) e_{i}(x_{\sigma(1)}) e_{j,k}(x_{\sigma(2)} \wedge x_{\sigma(3)})
\end{equation*}
in these cases. Here the terms of the sum on the right is only non-zero if $x_{\sigma(1)} = \xi_{i}$ and $x_{\sigma(2)} \wedge x_{\sigma(3)} = \xi_{j} \wedge \xi_{k}$ (up to constants). In the case $e_{1} \cup e_{1,2}$ (resp.\ $e_{3} \cup e_{3,2}$), we can only have this if $x_{1} \wedge x_{2} \wedge x_{3}$ contains two copies of $\xi_{1}$ (resp.\ $\xi_{3}$), which implies that $x_{1} \wedge x_{2} \wedge x_{3} = 0$. So $e_{1} \cup e_{1,2} = 0$ and $e_{3} \cup e_{3,2} = 0$. Alternatively, one can see this by plugging in $x_{1} \wedge x_{2} \wedge x_{3} = \xi_{1} \wedge \xi_{3} \wedge \xi_{2}$ and simply calculating the right side.

In the case $e_{1} \cup e_{3,2}$, \eqref{eq:cup-prod-def} simplifies to
\begin{equation*}
  (e_{1} \cup e_{3,2})(x_{1} \wedge x_{2} \wedge x_{3}) = \sum_{\substack{\sigma \in S_{3} \\ \sigma(2) < \sigma(3)}} \sign(\sigma) e_{1}(x_{\sigma(1)}) e_{3,2}(x_{\sigma(2)} \wedge x_{\sigma(3)}).
\end{equation*}
When $x_{1} \wedge x_{2} \wedge x_{3} = \xi_{1} \wedge \xi_{3} \wedge \xi_{2}$, we see that the terms on the right side are only non-zero if $x_{\sigma(1)} = \xi_{1}$, i.e., $\sigma(1)=1$, and thus $\sigma = (1)$ since $\sigma(2) < \sigma(3)$. So $x_{\sigma(1)} = \xi_{1}$, $x_{\sigma(2)} \wedge x_{\sigma(3)} = \xi_{3} \wedge \xi_{2}$ and $\sign(\sigma) = 1$, which gives us $(e_{1} \cup e_{3,2})(\xi_{1} \wedge \xi_{3} \wedge \xi_{2}) = 1$. Hence $e_{1} \cup e_{3,2} = e_{1,3,2}$, and therefore \[ v_{1}^{(-1,2)} \cup v_{2}^{(-3,5)} = v_{1}^{(-4,7)}. \]

In the case $e_{3} \cup e_{1,2}$, \eqref{eq:cup-prod-def} simplifies to
\begin{equation*}
  (e_{3} \cup e_{1,2})(x_{1} \wedge x_{2} \wedge x_{3}) = \sum_{\substack{\sigma \in S_{3} \\ \sigma(2) < \sigma(3)}} \sign(\sigma) e_{3}(x_{\sigma(1)}) e_{1,2}(x_{\sigma(2)} \wedge x_{\sigma(3)}).
\end{equation*}
When $x_{1} \wedge x_{2} \wedge x_{3} = \xi_{1} \wedge \xi_{3} \wedge \xi_{2}$, we see that the terms on the right side are only non-zero if $x_{\sigma(1)} = \xi_{3}$, i.e., $\sigma(1)=2$, and thus $\sigma = (1,2)$ since $\sigma(2) < \sigma(3)$. So $x_{\sigma(1)} = \xi_{3}$, $x_{\sigma(2)} \wedge x_{\sigma(3)} = \xi_{1} \wedge \xi_{2}$ and $\sign(\sigma) = -1$, which gives us $(e_{3} \cup e_{1,2})(\xi_{1} \wedge \xi_{3} \wedge \xi_{2}) = -1$. Hence $e_{3} \cup e_{1,2} = -e_{1,3,2}$, and therefore \[ v_{2}^{(-1,2)} \cup v_{1}^{(-3,5)} = -v_{1}^{(-4,7)}. \]

In conclusion, all the non-trivial and non-zero cup products (up to graded commutativity) are:
\begin{equation}
  \label{eq:cup-products-SL2}
  \begin{aligned}
    v_{1}^{(-1,2)} \cup v_{2}^{(-3,5)} &= v_{1}^{(-4,7)}, \\
    v_{2}^{(-1,2)} \cup v_{1}^{(-3,5)} &= -v_{1}^{(-4,7)}.
  \end{aligned}
\end{equation}

Now, since the spectral sequence collapses on the first page, all of the above work on the cup product of the Lie algebra cohomology transfers to the cup product on $H^{*}(I,\F_{p})$ as described above. In particular, since all $H^{n}(I,\F_{p})$ only have one graded component, this is a clear description of the cup product on $H^{*}(I,\F_{p})$, and not just a graded cup product. Thus we have now shown the following.

\begin{theorem}
  Let $I$ be the pro-$p$ Iwahori subgroup of $\SL_{2}(\Q_{p})$. Then
  \begin{equation*}
    \dim H^{n}(I,\F_{p}) =
    \begin{dcases}
      1 & n=0, \\
      2 & n=1, \\
      2 & n=2, \\
      1 & n=3,
    \end{dcases}
  \end{equation*}
  and the only interesting cup product that is non-trivial is $H^{1}(I,\F_{p}) \times H^{2}(I,\F_{p}) \to H^{3}(I,\F_{p})$, which can be described by considering $H^{2}(I,\F_{p})$ the dual space of $H^{1}(I,\F_{p})$.
\end{theorem}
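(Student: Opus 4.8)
The plan is to deduce the statement directly from the Lie-algebra computation carried out in Sections~\ref{subsec:non-id-xi_ij-SL2}--\ref{subsec:graded-coh-SL2}, combined with the collapse of Sørensen's spectral sequence \eqref{eq:spec-sec-tech}. First I would record the outcome of those sections: the Lazard Lie algebra $\lie{g}$ has basis $(\xi_1,\xi_2,\xi_3)$ with the single nontrivial bracket $[\xi_1,\xi_3]=-\xi_2$ coming from \eqref{eq:xi_ij-SL2}, the grading $\lie{g}^1=\Span_{\F_p}(\xi_1,\xi_3)$ and $\lie{g}^2=\Span_{\F_p}(\xi_2)$, and the graded cohomology $H^{s,t}(\lie{g},\F_p)$ whose dimensions are displayed in \Cref{tab:graded-coh-dims-SL2}. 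The only nonzero bidegrees on the $E_1$-page are $(0,0)$, $(-1,2)$, $(-3,5)$ and $(-4,7)$.

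The crucial step is to argue that the spectral sequence collapses at $E_1$. Since every differential $d_r^{s,t}$ has bidegree $(r,1-r)$ with $r\ge 1$, it strictly increases the $s$-coordinate while shifting $t$ accordingly; a short inspection of the four nonzero entries shows that no such arrow connects any two of $(0,0)$, $(-1,2)$, $(-3,5)$, $(-4,7)$ (the $t$-coordinates never match up). Hence every differential vanishes for purely bidegree reasons, so $E_\infty=E_1$. Convergence then gives $\gr^s H^{s+t}(I,\F_p)\cong H^{s,t}(\lie{g},\F_p)$, and because each total degree $n=s+t$ is realized by a \emph{unique} surviving bidegree, reading off \Cref{tab:graded-coh-dims-SL2} yields the dimensions $1,2,2,1$ in degrees $0,1,2,3$.

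For the cup product I would invoke multiplicativity of the spectral sequence together with the grading constraint \eqref{eq:Hst-Hst-subset}. Given the four surviving bidegrees, the only product of positive-degree classes whose target lands in a nonzero bidegree is $H^{-1,2}\otimes H^{-3,5}\to H^{-4,7}$, that is $H^1\otimes H^2\to H^3$; all other products among positive-degree classes vanish by bidegree. I would then evaluate this single pairing on the explicit dual bases of \eqref{eq:Hst-spaces-SL2} using \eqref{eq:cup-prod-def} (equivalently \eqref{eq:cup-prod-basis}), obtaining $v_1^{(-1,2)}\cup v_2^{(-3,5)}=v_1^{(-4,7)}$ and $v_2^{(-1,2)}\cup v_1^{(-3,5)}=-v_1^{(-4,7)}$, with the two remaining products zero. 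The resulting pairing matrix is nondegenerate, which identifies $H^2$ with the dual of $H^1$ under cupping into the one-dimensional $H^3$.

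The only genuine subtlety is the collapse at $E_1$, but the sparseness of \Cref{tab:graded-coh-dims-SL2} makes the bidegree argument immediate, so no real obstacle arises. As a conceptual sanity check, the perfect pairing $H^1\times H^2\to H^3\cong\F_p$ is precisely the Poincaré duality one expects for the $3$-dimensional compact $p$-adic Lie group $I$, which both confirms the nondegeneracy and explains the self-duality in the final clause of the statement.
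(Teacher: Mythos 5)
Your proposal is correct and follows essentially the same route as the paper: collapse of the multiplicative spectral sequence at $E_1$ by the bidegree argument on the sparse \Cref{tab:graded-coh-dims-SL2}, reading off the dimensions since each total degree has a unique surviving bidegree, and then computing the pairing $H^{-1,2}\otimes H^{-3,5}\to H^{-4,7}$ on the explicit dual bases via \eqref{eq:cup-prod-def}, arriving at the same nonzero products $v_1^{(-1,2)}\cup v_2^{(-3,5)}=v_1^{(-4,7)}$ and $v_2^{(-1,2)}\cup v_1^{(-3,5)}=-v_1^{(-4,7)}$. The closing remark on Poincaré duality is a nice additional sanity check not made explicit in the paper, but it does not change the argument.
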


\begin{remark}\label{rem:quaternion}
  Let $D$ be the division quaternion algebra over $\Q_{p}$ for a prime $p>3$ and let $G = (1+\idm_{D})^{\Nrd = 1}$, where $\Nrd = \Nrd_{D/\Q_{p}}$ is the norm form. From \cite[Sect.~6.3]{Sor} (or from \cite[Prop.~7]{Henn}) we know that there is an isomorphism
  \begin{equation*}
    H^{*}(G,\F_{p}) \iso \F_{p} \oplus \F_{D} \oplus \F_{D} \oplus \F_{p}
  \end{equation*}
  of graded $\F_{p}$-algebras (where $\F_{D} \iso \F_{p^{2}}$ is viewed simply as a $\F_{p}$-vector space). I.e., $H^{n}(G,\F_{p})$ has the same dimensions as described in \eqref{eq:dim-HnI-SL2}. \cite{Sor} also shows that the only non-trivial and non-zero cup product is $H^{1}(G,\F_{p}) \times H^{2}(G,\F_{p}) \to H^{3}(G,\F_{p})$, which corresponds to the trace pairing $\F_{D} \times \F_{D} \to \F_{p}, (x,y) \mapsto \Tr(xy)$ (where $\Tr = \Tr_{\F_{D}/\F_{p}}$ from \cite[Def.~2.5]{Neukirch}). To be more explicit, let's assume that $p \equiv 3 \pmod{4}$. Then $x^{2}+1$ is irreducible over $\F_{p}$, so we can write $\F_{D} = \F_{p}[\alpha]$ with $\alpha^{2} = -1$, where $\F_{p}[\alpha]$ has $\F_{p}$-basis $(1,\alpha)$. Now, considering the maps $1\colon a+b\alpha \mapsto a+b\alpha$, $\alpha\colon a+b\alpha \mapsto -b + a\alpha$ and $\alpha^{2}=-1 \colon a+b\alpha \mapsto -a-b\alpha$, we see that the trace pairing is given by
  \begin{align*}
    \F_{D} \times \F_{D} &\to \F_{p} \\
    (1,1) &\mapsto \Tr(1) = 2, \\
    (1,\alpha) &\mapsto \Tr(\alpha) = 0, \\
    (\alpha,1) &\mapsto \Tr(\alpha) = 0, \\
    (\alpha,\alpha) &\mapsto \Tr(\alpha^{2}) = -2.
  \end{align*}
  This is (up to a multiple of $2$) the same as the description of the cup product on $H^{*}(I,\F_{p})$ above for $I \subseteq \SL_{2}(\Z_{p})$, so an interesting question is: Is there a nice relation between mod $p$ representations of $G = (1+\idm_{D})^{\Nrd = 1}$ and $I$? We already have bijections between \emph{certain} mod $p$ representations of $D^{\times}$ and $\GL_{2}(\Q_{p})$ from the Jacquet-Langlands correspondence for $\GL_{2}$ (cf.\ \cite{JL}), but by \cite[Rem.~4.5]{JL-remark} irreducible representations of $D^{\times}$ are trivial on $1+\idm_{D}$, so we need something new if we want a correspondence between mod $p$ representations of $G = (1+\idm_{D})^{\Nrd = 1}$ and $I \subseteq \SL_{2}(\Q_{p})$.

  As a continuation to the question in the $p \equiv 3 \pmod{4}$ case, we note that $\lie{g}_{D} = \F_{D} \oplus \F_{D}^{\Tr = 0}$ sitting in degree $1$ and $2$ has Lie bracket given by $[\bar{x},\bar{y}] = \bar{x}\bar{y}^{p} - \bar{y}\bar{x}^{p}$ for any $\bar{x},\bar{y} \in \F_{D}$ in degree $1$ (and $0$ otherwise) by \cite[(6.6)]{Sor}. So, with $\F_{D} = \F_{p}[\alpha]$, we note that
  \[
    [1,\alpha] = 1\cdot\alpha^{p} - \alpha\cdot1^{p} = -2\alpha,
  \]
  since $p \equiv 3 \pmod{4}$ and $\alpha^{2} = -1$. Thus we have an obvious relation between $\lie{g}$ and $\lie{g}_{D}$ given by $\xi_{1},\xi_{3} \leftrightarrow 1,\alpha$ in degree $1$ and $\xi_{2} \leftrightarrow 2\alpha$ in degree $2$. The question is whether we can lift this to a relation between (the representations of) $I$ and $G$. Which we will explore in more detail in \Cref{subsec:quat-algs}.
\end{remark}

\begin{remark}
  In the end of \Cref{rem:quaternion}, we can see that $\lie{g} \iso \lie{g}_{D}$, which could lead one to ask whether a similar fact is true in general for central division algebras $D$ and $I \subseteq \SL_{n}(\Q_{p})$? We check this explicitly for $D$ and $I \subseteq \SL_{n}(\Q_{p})$ with $n=3$ and $p=5$ in \cite{new-code}, and there is no isomorphism between $\lie{g} = \F_{5} \otimes_{\F_{5}[\pi]} \gr I$ and $\lie{g}_{D} = \F_{5} \otimes_{\F_{5}[\pi]} \gr (1+\idm_{D})^{\Nrd=1}$ in this case. This check is done as follows:

  Assume $f \colon \lie{g} \to \lie{g}_{D}$ is a Lie algebra homomorphism, where $x_{1},\dotsc,x_{9}$ is a basis of $\lie{g}$ and $y_{1},\dotsc,y_{9}$ is a basis of $\lie{g}_{D}$, and write $(a_{ij})$ for the matrix representation of $f$ in these bases so that $f(x_{i}) = \sum_{k} a_{ik}y_{k}$. Also write $c_{ijk}^{\lie{g}}$ for the structure constants of $\lie{g}$ and $c_{ijk}^{\lie{g}_{D}}$ for the structure constants of $\lie{g}_{D}$. Then $\bigl[ f(x_{i}),f(x_{j}) \bigr] = f\bigl( [x_{i},x_{j}] \bigr)$ for each $i,j$ implies that \[ \sum_{m,n} a_{im}a_{jn}c_{mnk}^{\lie{g}_{D}} - \sum_{\ell}c_{ij\ell}^{\lie{g}}a_{\ell k} = 0 \] for each $i,j,k$. Writing $e_{ijk}$ for each of the left hand terms above, we can find for $x\in\set{1,2,3,4}$ the ideals $I_{x}$ generated by $\det (a_{ij})_{ij} - x$ and all $e_{ijk}$ (where the $a_{ij}$'s are the variables) and check that $I_{x} = \F_{p}( a_{ij} \mid 1 \leq i,j \leq 9 )$ for each $x=1,2,3,4$. I.e., there is no $(a_{ij})$ satisfying both $c_{ijk} = 0$ for all $i,j,k$ and $\det(a_{ij})_{ij} \neq 0$ in $\F_{5}$, since such $(a_{ij})$ would imply that $\F_{p}( a_{ij} \mid 1 \leq i,j \leq 9 )/I_{x} \neq 0$ for some $x \in \set{ 1,2,3,4 }$, and thus $f$ cannot be an isomorphism. This calculation is all done in \cite{new-code}.
\end{remark}




\section{\texorpdfstring{$H^{*}(I,\F_{p})$}{H*(I,Fp)} for the pro-\texorpdfstring{$p$}{p} Iwahori \texorpdfstring{$I \subseteq \SL_{3}(\Z_{p})$}{I in SL3(Zp)}}%
\label{sec:Iwa-SL3}

In this section we will describe the continuous group cohomology of the pro-$p$ Iwahori subgroup $I$ of $\SL_{3}(\Q_{p})$.

When $I$ is the pro-$p$ Iwahori subgroup in $\SL_{3}(\Q_{p})$, we know by \Cref{subsec:setup-iwa} that we can take it to be of the form
\begin{equation*}
  I = \pmat{1+p\Z_{p} & \Z_{p} & \Z_{p} \\ p\Z_{p} & 1+p\Z_{p} & \Z_{p} \\ p\Z_{p} & p\Z_{p} & 1+p\Z_{p}}^{\!\!\det = 1} \subseteq \SL_{3}(\Z_{p}),
\end{equation*}
and, by \Cref{subsec:setup-iwa}, we have an ordered basis
\begin{equation}
  \label{eq:gis-SL3}
  \begin{gathered}
    g_{1} = \pmat{ 1 \\ & 1 \\ p && 1 }, \quad g_{2} = \pmat{ 1 \\ p & 1 \\ && 1 }, \quad g_{3} = \pmat{ 1 \\ & 1 \\ & p & 1 }, \\
    g_{4} = \pmat{ \exp(p) \\ & \exp(-p) \\ && 1 }, \quad g_{5} = \pmat{ 1 \\ & \exp(p) \\ && \exp(-p) }, \\
    g_{6} = \pmat{ 1 \\ & 1 & 1 \\ && 1 }, \quad g_{7} = \pmat{ 1 & 1 \\ & 1 \\ && 1 }, \quad g_{8} = \pmat{ 1 && 1 \\ & 1 \\ && 1 }.
  \end{gathered}
\end{equation}
Here we write any zeros as blank space in matrices, to make the notation easier to read for the bigger matrices.

By calculations similar to what we did in \Cref{subsec:non-id-xi_ij-SL2}, we get that the non-zero commutators $[\xi_{i},\xi_{j}]$ with $i<j$ are
\begin{equation}
  \label{eq:xi_ij-SL3-main}
  \begin{aligned}
    [\xi_{1},\xi_{6}] &= -\xi_{2}, & [\xi_{1},\xi_{7}] &= \xi_{3}, & [\xi_{1},\xi_{8}] &= -(\xi_{4}+\xi_{5}), \\
    [\xi_{2},\xi_{7}] &= -\xi_{4}, & [\xi_{3},\xi_{6}] &= -\xi_{5}, & [\xi_{6},\xi_{7}] &= -\xi_{8}.
  \end{aligned}
\end{equation}
For the details of these calculations, we refer to \Cref{sec:calc}.

Looking at \eqref{eq:Iwa-p-val-basis-SLn} (with $e=1$ and $h=3$), we see that
\begin{align*}
  \omega(g_{1}) &= 1-\frac{2}{3} = \frac{1}{3}, & \omega(g_{2}) &= 1-\frac{1}{3} = \frac{2}{3}, & \omega(g_{3}) &= 1-\frac{1}{3} = \frac{2}{3}, \\
  \omega(g_{4}) &= 1, & \omega(g_{5}) &= 1, & \omega(g_{6}) &= \frac{1}{3}, \\
  \omega(g_{7}) &= \frac{1}{3}, & \omega(g_{8}) &= \frac{2}{3}.
\end{align*}
Hence
\begin{equation*}
  \lie{g} = \F_{p} \otimes_{\F_{p}[\pi]} \gr I = \Span_{\F_{p}}(\xi_{1},\dotsc,\xi_{8}) = \lie{g}^{1} \oplus \lie{g}^{2} \oplus \lie{g}^{3},
\end{equation*}
where
\begin{align*}
  \lie{g}^{1} &= \lie{g}_{\frac{1}{3}} = \Span_{\F_{p}}(\xi_{1},\xi_{6},\xi_{7}), \\
  \lie{g}^{2} &= \lie{g}_{\frac{2}{3}} = \Span_{\F_{p}}(\xi_{2},\xi_{3},\xi_{8}), \\
  \lie{g}^{3} &= \lie{g}_{1} = \Span_{\F_{p}}(\xi_{4},\xi_{5}).
\end{align*}
See \Cref{rem:g-Z-grading} for more details.

This is all the information we need to calculate the cohomology spaces of $\lie{g}$ (and through that $I$), and using these we can also describe the cup product in the basis we are working with. This time the calculuations are so long that we have moved to doing them by computer (to be precise we are using SageMath). We refer to \cite{new-code} for the details of our implementation, and in \cite{thesis} more details (using a slightly different method) are shown.

We get that
\begin{equation}
  \label{eq:Hn-to-Hst-SL3}
  \begin{aligned}
    H^{0} &= H^{0,0}, \\
    H^{1} &= H^{-1,2}, \\
    H^{2} &= H^{-3,5} \oplus H^{-4,6}, \\
    H^{3} &= H^{-4,7} \oplus H^{-5,8} \oplus H^{-6,9}, \\
    H^{4} &= H^{-7,11} \oplus H^{-8,12}, \\
    H^{5} &= H^{-9,14} \oplus H^{-10,15} \oplus H^{-11,16}, \\
    H^{6} &= H^{-11,17} \oplus H^{-12,18}, \\
    H^{7} &= H^{-14,21}, \\
    H^{8} &= H^{-15,23}
  \end{aligned}
\end{equation}
with dimension as described in \Cref{tab:graded-coh-dims-SL3}.

\begin{table}[ht]
  \centering
  \caption{Dimensions of \texorpdfstring{$E_{1}^{s,t} = H^{s,t} = \gr^{s} H^{s+t}(\lie{g},\F_{p})$}{E1(s,t) = H(s,t)} for the \texorpdfstring{$I \subseteq \SL_{3}(\Z_{p})$}{I in SL3(Zp)} case.}
  \label{tab:graded-coh-dims-SL3}
  \renewcommand{\arraystretch}{1.7}
  \scalebox{0.7}{%
    $\begin{NiceArray}{*{17}{c}}[hvlines, columns-width=auto]
      \diagbox{t}{s} & 0 & -1 & -2 & -3 & -4 & -5 & -6 & -7 & -8 & -9 & -10 & -11 & -12 & -13 & -14 & -15 \\
      0 & 1\\
      1 \\
      2 && 3 \\
      3 \\
      4 \\
      5 &&&& 6\\
      6 &&&&& 3 \\
      7 &&&&& 3\\
      8 &&&&&& 6\\
      9 &&&&&&& 7\\
      10 \\
      11 &&&&&&&& 9 \\
      12 &&&&&&&&& 9 \\
      13 \\
      14 &&&&&&&&&& 7 \\
      15 &&&&&&&&&&& 6 \\
      16 &&&&&&&&&&&& 3 \\
      17 &&&&&&&&&&&& 3 \\
      18 &&&&&&&&&&&&& 6 \\
      19 \\
      20 \\
      21 &&&&&&&&&&&&&&& 3 \\
      22 \\
      23 &&&&&&&&&&&&&&&& 1
    \end{NiceArray}$%
  }
  \renewcommand{\arraystretch}{1}
\end{table}

\subsection{Describing the group cohomology, \texorpdfstring{$H^{n}(I,k)$}{Hn(I,k)}}%
\label{subsec:group-coh-SL3}

We note that all differentials $d_{r}^{s,t} \colon E_{r}^{s,t} \to E_{r}^{s+r,t+1-r}$ in \Cref{tab:graded-coh-dims-SL3} have bidegree $(r,1-r)$, i.e., they are all below the $(r,-r)$ arrow going $r$ to the left and $r$ up in the table, where $r \geq 1$. Looking at \Cref{tab:graded-coh-dims-SL3}, this clearly means that all differentials for $r \geq 1$ are trivial, and thus the spectral sequence collapses on the first page. Hence $H^{s,t}(\lie{g},\F_{p}) = E_{1}^{s,t} \iso E_{\infty}^{s,t} = \gr^{s} H^{s+t}(I,\F_{p})$, and by \eqref{eq:Hn-to-Hst-SL3} and \Cref{tab:graded-coh-dims-SL3} we get that
\begin{equation}
  \label{eq:dim-HnI-SL3}
  \dim H^{n}(I,\F_{p}) =
  \begin{dcases}
    1 & n=0, \\
    3 & n=1, \\
    9 & n=2, \\
    16 & n=3, \\
    18 & n=4, \\
    16 & n=5, \\
    9 & n=6, \\
    3 & n=7, \\
    1 & n=8.
  \end{dcases}
\end{equation}

Recalling that the spectral sequence is multiplicative, we also note, by \Cref{tab:graded-coh-dims-SL3}, that $H^{s,t} \cup H^{s',t'} \subseteq H^{s+s',t+t'}$ implies the we have cup products
\begin{equation*}
  \gr^{s} H^{n}(I,\F_{p}) \otimes \gr^{s'} H^{n'}(I,\F_{p}) \to \gr^{s+s'} H^{n+n'}(I,\F_{p})
\end{equation*}
for $H^{*}(I, \F_{p})$. And, since the spectral sequence collapses on the first page, we can transfer our cup product calculations from $H^{*}(\lie{g},\F_{p})$ to $H^{*}(I,\F_{p})$. We list all the cup products of $H^{s,t}(\lie{g},\F_{p}) \iso \gr^{s} H^{s+t}(I,\F_{p})$ in \Cref{subsec:cup-products}.

\begin{remark}
  We note that \cite{thesis} has a mistake in the part corresponding to the above. In \cite{thesis}, it says that the cup product is trivial, where it should say that the cup product on $H^{*}(I,\F_{p})$ is straightforward to calculate by calculating the cup product on $H^{*}(\lie{g},\F_{p})$ (but it is not trivial).
\end{remark}


\section{Future work}%
\label{sec:future}

In this section we will briefly discuss some interesting future directions of research that were also discussed in \cite{thesis}.

\subsection{Quaternion algebras}%
\label{subsec:quat-algs}

In this subsection, we will further assume that $p>5$ is a prime of the form $p \equiv 3 \pmod{4}$, so that $\Q_{p^{2}} = \Q_{p}(i)$ is the unique unramified quadratic extension of $\Q_{p}$, and $\F_{p^{2}} = \F_{p}[i]$ is the unique quadratic extension of $\F_{p}$.

Let $D$ be the division quaternion algebra over $\Q_{p}$ and let $\tilde{G} = 1+\idm_{D}$ and $G = (1+\idm_{D})^{\Nrd = 1}$, where $\Nrd = \Nrd_{D/\Q_{p}}$ is the norm form. By \cite[Thm.~12.1.5]{Voight} we can assume that $i^{2} = -1$ and $j^{2} = p$ (i.e., we have a tower $D / \Q_{p}(i) / \Q_{p}$), and that $\sO_{D}=\Z_{p}[i,j,k]$ (where $k=ij$) and $\idm_{D} = j\sO_{D} = \sO_{D}j$ (i.e., $\varpi_{D} = j$), which has $\Z_{p}$-basis $p,pi,j,k$, by \cite[Thm.13.1.6]{Voight}.

Now let $\sigma \colon \Q_{p}(i) \to \Q_{p}(i)$ be the complex conjugate and note that $\gen{\sigma} = \Gal\bigl( \Q_{p}(i)/\Q_{p} \bigr)$, so
\begin{equation*}
  D \iso \set[\bigg]{\pmat{ a+bi & c+di \\ p(c-di) & a-bi } \given a,b,c,d\in\Q_{p} } \subseteq M_{2}\bigl( \Q_{p}(i) \bigr)
\end{equation*}
by \cite[Cor.~13.3.14]{Voight}. Hence we have an embedding
\begin{align*}
  D &\hookrightarrow M_{2}\bigl( \Q_{p}(i) \bigr) \\
  a + bi + cj + dk &\mapsto \pmat{ a+bi & c+di \\ p(c-di) & a-bi }
\end{align*}
with
\begin{equation*}
  \Nrd(a+bi+cj+dk) = a^{2} + b^{2} - pc^{2} - pd^{2} = \det\Bigl( \pmat{ a+bi & c+di \\ p(c-di) & a-bi } \Bigr).
\end{equation*}
We note furthermore that $\sO_{D} = \Z_{p}[i] \oplus \Z_{p}[i]j$ and $\idm_{D} = \sO_{D}j = p\Z_{p}[i] \oplus \Z_{p}[i]j$ gives us
\begin{equation*}
  \idm_{D} \iso \set[\bigg]{\pmat{ p(a+bi) & c+di \\ p(c-di) & p(a-bi) } \given a,b,c,d\in\Q_{p} },
\end{equation*}
so $1 + \idm_{D} \subseteq I_{\GL_{2}(\Q_{p}(i))}$, where we denote by $I_{G}$ the (standard choice of) pro-$p$ Iwahori subgroup of $G$ (cf.\ \Cref{subsec:setup-iwa}). Altogether, we get a commutative diagram
\begin{equation}\label{eq:quat-diagram}
  \begin{tikzcd}
    (1+\idm_{D})^{\Nrd = 1} \ar[d, hook] \ar[r, hook] & I_{\SL_{2}(\Q_{p}(i))} \ar[d, hook] & \ar[l, hook'] I_{\SL_{2}(\Q_{p})} \ar[d, hook] \\
    (\sO_{D}^{\times})^{\Nrd = 1} \ar[d, hook] \ar[r, hook] & \SL_{2}\bigl( \Z_{p}[i] \bigr) \ar[d, hook] & \ar[l, hook'] \SL_{2}(\Z_{p}) \ar[d, hook] \\
    (D^{\times})^{\Nrd = 1} \ar[d, hook] \ar[r, hook] & \SL_{2}\bigl( \Q_{p}(i) \bigr) \ar[d, hook] & \ar[l, hook'] \SL_{2}(\Q_{p}) \ar[d, hook] \\
    D^{\times} \ar[r, hook] & \GL_{2}\bigl( \Q_{p}(i) \bigr) & \ar[l, hook'] \GL_{2}(\Q_{p}) \\
    \sO_{D}^{\times} \ar[u, hook] \ar[r, hook] & \GL_{2}\bigl( \Z_{p}[i] \bigr) \ar[u, hook] & \ar[l, hook'] \GL_{2}(\Z_{p}) \ar[u, hook] \\
    1+\idm_{D} \ar[u, hook] \ar[r, hook] & I_{\GL_{2}(\Q_{p}(i))} \ar[u, hook] & \ar[l, hook'] I_{\GL_{2}(\Q_{p})}. \ar[u, hook]
  \end{tikzcd}
\end{equation}

We saw in \Cref{rem:quaternion} that $H^{*}(G,\F_{p}) \iso H^{*}(I_{\SL_{2}(\Q_{p})},\F_{p})$, and in \Cref{prop:GLn-connection} we noted that $H^{*}(I_{\SL_{2}(\Q_{p})},\F_{p}) \otimes_{F_{p}} \F_{p}[\varepsilon] \iso H^{*}(I_{\GL_{2}(\Q_{p})},\F_{p})$ (where $\varepsilon^{2} = 0$) and $H^{*}(\tilde{G},\F_{p}) \iso H^{*}(G,\F_{p}) \otimes_{\F_{p}} \F_{p}[\varepsilon]$, so $H^{*}(\tilde{G},\F_{p}) \iso H^{*}(I_{\GL_{2}(\Q_{p})},\F_{p})$. (Recall that $G = (1+\idm_{D})^{\Nrd = 1}$ and $\tilde{G} = 1+\idm_{D}$.) Furthermore, \cite[Sect.~6.3]{Sor} argues that $H^{*}(\sO_{D}^{\times},\F_{p}) \iso H^{*}(\tilde{G},\F_{p})^{\F_{D}^{\times}}$, using that we can factor $\sO_{D}^{\times}$ as a semi-direct product $\tilde{G} \rtimes \F_{D}^{\times}$. Here the $\F_{D}^{\times}$-action on $H^{*}(\tilde{G},\F_{p})$ is understood and non-trivial, cf.\ \cite[Prop.~7~(b)]{Henn}. An interesting question is, if the comparison between cohomology of the left side and right side of \eqref{eq:quat-diagram} can be somehow continued?

\begin{remark}
  To see that $H^{*}(\tilde{G},\F_{p}) \iso H^{*}(I_{\GL_{2}(\Q_{p})},\F_{p})$ for $p\geq5$ in general, and not just for $p \equiv 3 \pmod{4}$, one can compare the basis and structure of $H^{*}(\tilde{G},\F_{p})$ described in \cite[Thm.~3.2]{Ravenel} with the basis and structure described in \cite[Sect.~3.4.3]{thesis}.
\end{remark}

Another interesting direction of research is to note that we already have bijections between \emph{certain} mod $p$ representations of $D^{\times}$ and $\GL_{2}(\Q_{p})$ from the Jacquet-Langlands correspondence for $\GL_{2}$ (cf.\ \cite{JL}), and we can ask whether there are similar relations in between the left and right side of the other rows of \eqref{eq:quat-diagram}. Here we note that by \cite[Rem.~4.5]{JL-remark} irreducible representations of $D^{\times}$ are trivial on $1+\idm_{D}$, so we need something new if we want a correspondence between certain mod $p$ representations of $G = (1+\idm_{D})^{\Nrd = 1}$ and $I_{\SL_{2}(\Q_{p})}$ or between certain mod $p$ representations of $\tilde{G} = 1+\idm_{D}$ and $I_{\GL_{2}(\Q_{p})}$.

Finally, although we already have isomorphisms $H^{*}(G,\F_{p}) \iso H^{*}(I_{\SL_{2}(\Q_{p})},\F_{p})$ and $H^{*}(\tilde{G},\F_{p}) \iso H^{*}(I_{\GL_{2}(\Q_{p})},\F_{p})$, we note that these were obtained by concrete calculations, and we would really prefer to have canonical isomorphisms (possibly obtained by working with the corresponding row of \eqref{eq:quat-diagram}).

In pursuit of the canonical isomorphisms mentioned above, we note that one can show by explicit calculations (with bases) that the inclusions of \eqref{eq:quat-diagram} give inclusions
\begin{equation*}
  \begin{tikzcd}
    \gr (1+\idm_{D})^{\Nrd = 1} \ar[r, hook] & \gr I_{\SL_{2}(\Q_{p}(i))} & \ar[l, hook'] \gr I_{\SL_{2}(\Q_{p})}, \\
    \gr (1+\idm_{D}) \ar[r, hook] & \gr I_{\GL_{2}(\Q_{p}(i))} & \ar[l, hook'] \gr I_{\GL_{2}(\Q_{p})},
  \end{tikzcd}
\end{equation*}
where the pro-$p$ Iwahori subgroups are graded as usual (start with $\gr I = \bigoplus_{\nu>0} \gr_{\nu} I$ where $\gr_{\nu} I = I_{\nu}/I_{\nu+}$ and translate to $\gr^{i} I$), $1+\idm_{D}$ is graded by $\gr^{i}(1+\idm_{D}) = (1+\idm_{D}^{i})/(1+\idm_{D}^{i+1})$, and $(1+\idm_{D})^{\Nrd = 1}$ is graded by $\gr^{i}(1+\idm_{D})^{\Nrd = 1} = (1+\idm_{D}^{i})^{\Nrd = 1}/(1+\idm_{D}^{i+1})^{\Nrd = 1}$. These inclusions further translate to inclusions
\begin{equation*}
  \begin{tikzcd}
    \lie{g}_{(1+\idm_{D})^{\Nrd = 1}} \ar[r, hook] & \lie{g}_{I_{\SL_{2}(\Q_{p}(i))}} & \ar[l, hook'] \lie{g}_{I_{\SL_{2}(\Q_{p})}}, \\
    \lie{g}_{(1+\idm_{D})} \ar[r, hook] & \lie{g}_{I_{\GL_{2}(\Q_{p}(i))}} & \ar[l, hook'] \lie{g}_{I_{\GL_{2}(\Q_{p})}},
  \end{tikzcd}
\end{equation*}
where $\lie{g}_{G} = \F_{p} \otimes_{\F_{p}[\pi]} \gr G$ is the Lazard Lie algebra of $G$. We note that these inclusions do not have the same images, but we noted in \Cref{rem:quaternion} that $\lie{g}_{(1+\idm_{D})^{\Nrd = 1}} \iso \lie{g}_{I_{\SL_{2}(\Q_{p}(i))}}$, so we might be able to come up with a canonical isomorphism through these somehow.


\subsection{Central division algebras}%
\label{subsec:central-div-algs}

Let $D$ be the central division algebra over $\Q_{p}$ of dimension $n^{2}$ and invariant $\frac{1}{n}$. Recall the following setup from \cite[Sect.~6.3]{Sor}: The valuation $v_{p}$ on $\Q_{p}$ extends uniquely to a valuation $\tilde{v} \colon D^{\times} \to \frac{1}{n}\Z$ by the formula $\tilde{v}(x) = \frac{1}{n}v\bigl(\Nrd_{D/\Q_{p}}(x)\bigr)$, and the valuation ring $\sO_{D} = \set{ x : \tilde{v}(x) >0 }$ is the maximal compact subring of $D$. It is local with maximal ideal $\idm_{D} = \set{ x : \tilde{v}(x) > 0 }$ and residue field $\F_{D} \iso \F_{p^{n}}$. Furthermore, we can pick $\varpi_{D}$ such that $\tilde{v}(\varpi_{D}) = \frac{1}{n}$, $\idm_{D} = \varpi_{D}\sO_{D} = \sO_{D}\varpi_{D}$ and $p = \varpi_{D}^{n}$.

When $p > n+1$, we also saw in \cite[Sect.~6.3]{Sor} that $\tilde{G} = 1 + \idm_{D}$ has Lazard Lie algebra $\tilde{\lie{g}} = \F_{D} \oplus \dotsb \oplus \F_{D}$ concentrated in degrees $1,2,\dotsc,n$ with Lie bracket given by the formula
\begin{equation}\label{eq:central-div-alg-com}
  [x,y] = xy^{p^{i}} - yx^{p^{j}}
\end{equation}
for $x \in \tilde{\lie{g}}^{i} \iso \F_{D}$ and $y \in \tilde{\lie{g}}^{j}$. Furthermore $G = (1 + \idm_{D})^{\Nrd = 1}$ has Lazard Lie algebra $\lie{g} = \F_{D} \oplus \dotsb \oplus \F_{D} \oplus \F_{D}^{\Tr = 0}$ concentrated in degrees $1,2,\dotsc,n$ with Lie bracket given by \eqref{eq:central-div-alg-com}. (Note that one can easily check that $[x,y]$ has trace zero when $i+j = n$.) Here $\F_{D}^{\Tr = 0}$ is the kernel of the trace $\Tr_{\F_{D}/\F_{p}}$ and $\lie{g} \subseteq \tilde{\lie{g}}$ is a codimension one Lie subalgebra.

In the previous subsection we focused on the case $n = 2$ (and $p \equiv 3 \pmod{4}$), but one can ask if some of the ideas work in more general cases. For the remainder of this subsection we will focus on the case $n = 3$ and $p = 5$.

We note that $x^{3} + 3x + 3$ is an irreducible polynomial in $\F_{5}[x]$, so $\F_{D} \iso \F_{5^{3}} \iso \F_{5}[\alpha]$ where $\alpha^{3} = -3\alpha-3 = 2\alpha + 2$. Now let $\xi_{1} = 1, \xi_{2} = \alpha, \xi_{3} = \alpha^{2}$ be the basis of $\tilde{\lie{g}}^{1} \iso \F_{D}$, let $\xi_{4} = 1, \xi_{5} = \alpha, \xi_{6} = \alpha^{2}$ be the basis of $\tilde{\lie{g}}^{2} \iso \F_{D}$, and let $\xi_{7} = 1, \xi_{8} = \alpha, \xi_{9} = \alpha^{2}$ be the basis of $\tilde{\lie{g}}^{3} \iso \F_{D}$. Using \eqref{eq:central-div-alg-com}, we see that
\begin{equation}\label{eq:central-div-alg-first-com}
  \begin{aligned}
    [\xi_{1},\xi_{2}] &= 4\xi_{4} + 3\xi_{5} + 2\xi_{6}, & [\xi_{1},\xi_{3}] &= 3\xi_{4} + 2\xi_{5} + 4\xi_{6}, \\
    [\xi_{1},\xi_{5}] &= 4\xi_{7} + 3\xi_{8} + 2\xi_{9}, & [\xi_{1},\xi_{6}] &= 3\xi_{7} + 2\xi_{8} + 4\xi_{9}, \\
    [\xi_{2},\xi_{3}] &= 2\xi_{4} + \xi_{5} + 4\xi_{6}, & [\xi_{2},\xi_{4}] &= 4\xi_{7} + \xi_{8} + 2\xi_{9}, \\
    [\xi_{2},\xi_{5}] &= 3\xi_{7} + \xi_{8} + 4\xi_{9}, & [\xi_{2},\xi_{6}] &= 2\xi_{8}, \\
    [\xi_{3},\xi_{4}] &= 4\xi_{7} + 2\xi_{8} + 2\xi_{9}, & [\xi_{3},\xi_{5}] &= 3\xi_{8}, \\
    [\xi_{3},\xi_{6}] &= 3\xi_{7} + 4\xi_{9}.
  \end{aligned}
\end{equation}
Here
\begin{equation*}
  \tilde{\lie{g}}^{1} = \Span_{\F_{5}}(\xi_{1},\xi_{2},\xi_{3}), \quad \tilde{\lie{g}}^{2} = \Span_{\F_{5}}(\xi_{4},\xi_{5},\xi_{6}), \quad \tilde{\lie{g}}^{3} = \Span_{\F_{5}}(\xi_{7},\xi_{8},\xi_{9}),
\end{equation*}
so we order the basis by the index of the $\xi_{i}$'s.

Continuing, we can recall that $\Tr_{\F_{D}/\F_{5}}(x) = x + x^{5} + x^{5^{2}}$ for $x \in \F_{D} \iso \F_{5^{3}}$, so
\begin{align*}
  \Tr_{\F_{D}/\F_{5}}(1) &= 3, & \Tr_{\F_{D}/\F_{5}}(\alpha) &= 0, & \Tr_{\F_{D}/\F_{5}}(\alpha^{2}) &= 4,
\end{align*}
since $\alpha^{3} = 2\alpha + 2$. Thus $\F_{D}^{\Tr = 0}$ has basis $\alpha, 4+2\alpha^{2}$. Now let $\xi_{1}' = 1, \xi_{2}' = \alpha, \xi_{3}' = \alpha^{2}$ be the basis of $\lie{g}^{1} \iso \F_{D}$, let $\xi_{4}' = 1, \xi_{5}' = \alpha, \xi_{6}' = \alpha^{2}$ be the basis of $\lie{g}^{2} \iso \F_{D}$, and let $\xi_{7}' = \alpha, \xi_{8}' = 4+2\alpha$ be the basis of $\lie{g}^{3} \iso \F_{D}^{\Tr = 0}$. Using \eqref{eq:central-div-alg-com}, we see that
\begin{equation}\label{eq:central-div-alg-second-com}
  \begin{aligned}
    [\xi_{1}',\xi_{2}'] &= 4\xi_{4}' + 3\xi_{5}' + 2\xi_{6}', & [\xi_{1}',\xi_{3}'] &= 3\xi_{4}' + 2\xi_{5}' + 4\xi_{6}', \\
    [\xi_{1}',\xi_{5}'] &= 3\xi_{7}' + \xi_{8}', & [\xi_{1}',\xi_{6}'] &= 2\xi_{7}' + 2\xi_{8}', \\
    [\xi_{2}',\xi_{3}'] &= 2\xi_{4}' + \xi_{5}' + 4\xi_{6}', & [\xi_{2}',\xi_{4}'] &= \xi_{7}' + \xi_{8}', \\
    [\xi_{2}',\xi_{5}'] &= \xi_{7}' + 2\xi_{8}', & [\xi_{2}',\xi_{6}'] &= 2\xi_{7}', \\
    [\xi_{3}',\xi_{4}'] &= 2\xi_{7}' + \xi_{8}', & [\xi_{3}',\xi_{5}'] &= 3\xi_{7}', \\
    [\xi_{3}',\xi_{6}'] &= 2\xi_{8}'.
  \end{aligned}
\end{equation}
Here
\begin{equation*}
  \lie{g}^{1} = \Span_{\F_{5}}(\xi_{1}',\xi_{2}',\xi_{3}'), \quad \lie{g}^{2} = \Span_{\F_{5}}(\xi_{4}',\xi_{5}',\xi_{6}'), \quad \lie{g}^{3} = \Span_{\F_{5}}(\xi_{7}',\xi_{8}'),
\end{equation*}
so we order the basis by the index of the $\xi_{i}'$'s. Calculating the cohomology as in the previous sections with this information, we get \Cref{tab:graded-coh-dims-central-div-alg-3-prime}

\begin{remark}
  Note that when calculating the cohomology here, we need to do all calculations modulo $5$ since \eqref{eq:central-div-alg-second-com} do not lift to a Lie algebra over $\Z$ with these Chevalley constants. See \cite{code} for the details.
\end{remark}

\begin{table}[ht]
  \centering
  \caption{Dimensions of \texorpdfstring{$E_{1}^{s,t} = H^{s,t} = \gr^{s} H^{s+t}(\lie{g},\F_{5})$}{E1(s,t) = H(s,t)} for \texorpdfstring{$G = (1+\idm_{D})^{\Nrd = 1}$}{G = (1+mD) with Nrd = 1} in the \texorpdfstring{$n=3$}{n = 3} and \texorpdfstring{$p=5$}{p = 5} case.}
  \label{tab:graded-coh-dims-central-div-alg-3-prime}
  \renewcommand{\arraystretch}{1.7}
  \scalebox{0.7}{%
    $\begin{NiceArray}{*{17}{c}}[hvlines, columns-width=auto]
      \diagbox{t}{s} & 0 & -1 & -2 & -3 & -4 & -5 & -6 & -7 & -8 & -9 & -10 & -11 & -12 & -13 & -14 & -15 \\
      0 & 1\\
      1 \\
      2 && 3 \\
      3 \\
      4 \\
      5 &&&& 6\\
      6 &&&&& 3 \\
      7 &&&&& 3\\
      8 &&&&&& 6\\
      9 &&&&&&& 7\\
      10 \\
      11 &&&&&&&& 9 \\
      12 &&&&&&&&& 9 \\
      13 \\
      14 &&&&&&&&&& 7 \\
      15 &&&&&&&&&&& 6 \\
      16 &&&&&&&&&&&& 3 \\
      17 &&&&&&&&&&&& 3 \\
      18 &&&&&&&&&&&&& 6 \\
      19 \\
      20 \\
      21 &&&&&&&&&&&&&&& 3 \\
      22 \\
      23 &&&&&&&&&&&&&&&& 1
    \end{NiceArray}$%
  }
  \renewcommand{\arraystretch}{1}
\end{table}

Comparing \Cref{tab:graded-coh-dims-SL3} and \Cref{tab:graded-coh-dims-central-div-alg-3-prime}, we see that $H^{*}(I,\F_{5})$ for $I \subseteq \SL_{3}(\Z_{5})$ and $H^{*}\bigl((1+\idm_{D})^{\Nrd = 1},\F_{5}\bigr)$ have the same graded cohomology dimensions, and it would be interesting to investigate whether $H^{*}(I,\F_{5}) \iso H^{*}\bigl((1+\idm_{D})^{\Nrd = 1},\F_{5}\bigr)$ as graded algebras. More generally, is $H^{*}(I,\F_{p}) \iso H^{*}\bigl((1+\idm_{D})^{\Nrd = 1},\F_{p}\bigr)$ as graded algebras for $p \geq 5$?

As in the $I \subseteq \SL_{3}(\F_{p})$ case, we can do all calculations by computer and find all the (graded) cup products of $H^{*}\bigl( (1+\idm_{D})^{\Nrd=1},\F_{5} )$. We have chosen not to include these in this paper, but they can be found in \cite{new-code}, and we note that the cup products are so complicated that it is hard to tell whether we can have an isomorphism between $H^{*}\bigl( I,\F_{5} )$ and $H^{*}\bigl( (1+\idm_{D})^{\Nrd=1},\F_{5} )$.

Altogether, the above seems to hint at the following conjecture:

\begin{conjecture}
  Let $D$ be the central division algebra over $\Q_{p}$ of dimension $n^{2}$ and invariant $\frac{1}{n}$. Let $\sO_{D}$ be the maximal compact (local) subring of $D$ with maximal ideal $\idm_{D}$ and residue field $\F_{D} \iso \F_{p^{n}}$. If $p > n+1$ then
  \begin{enumerate}[$\bullet$]
    \item $H^{*}\bigl( I_{\GL_{n}(\Q_{p})}, \F_{p} \bigr) \iso H^{*}\bigl( 1+\idm_{D}, \F_{p} \bigr)$ as graded algebras, and
    \item $H^{*}\bigl( I_{\SL_{n}(\Q_{p})}, \F_{p} \bigr) \iso H^{*}\bigl( (1+\idm_{D})^{\Nrd = 1}, \F_{p} \bigr)$ as graded algebras.
  \end{enumerate}
\end{conjecture}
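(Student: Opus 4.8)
The plan is to reduce the conjecture, via the tools already set up, to a purely Lie-algebraic comparison, and then to attack that comparison head on. First, by \Cref{prop:GLn-connection} the two bullets are equivalent: since $H^{*}(I_{\GL_{n}(\Q_{p})},\F_{p}) \iso H^{*}(I_{\SL_{n}(\Q_{p})},\F_{p}) \otimes \F_{p}[\varepsilon]$ and $H^{*}(1+\idm_{D},\F_{p}) \iso H^{*}((1+\idm_{D})^{\Nrd=1},\F_{p}) \otimes \F_{p}[\varepsilon]$ as graded algebras, a graded-algebra isomorphism for the norm-one/$\SL_{n}$ pair tensors up to one for the $\GL_{n}$ pair, so I would only prove the second bullet. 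Next I would replace each group cohomology by the cohomology of its Lazard Lie algebra. Sørensen's spectral sequence $E_{1}^{s,t}=H^{s,t}(\lie{g},\F_{p}) \Longrightarrow H^{s+t}(G,\F_{p})$ is multiplicative, so if it collapses at $E_{1}$ for both $G=I_{\SL_{n}(\Q_{p})}$ and $G=(1+\idm_{D})^{\Nrd=1}$, then $\gr H^{*}(G,\F_{p}) \iso H^{*}(\lie{g},\F_{p})$ as bigraded algebras, and the conjecture reduces to a bigraded-algebra isomorphism $H^{*}(\lie{g}_{I},\F_{p}) \iso H^{*}(\lie{g}_{D},\F_{p})$, together with a check that the filtration on each $H^{n}(G,\F_{p})$ carries no hidden multiplicative extension.

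The structural input that makes this believable is that the two Lazard Lie algebras, although different, have the same shape. From \eqref{eq:Iwa-p-val-basis-SLn} (with $e=1$, $h=n$) the Iwahori algebra $\lie{g}_{I}$ is a positively graded unitary nilpotent $\F_{p}$-Lie algebra of dimension $n^{2}-1$ whose graded piece in degree $m$ has dimension $n$ for $1\le m\le n-1$ and dimension $n-1$ in degree $n$; from \eqref{eq:central-div-alg-com} the algebra $\lie{g}_{D}=\F_{D}^{\oplus(n-1)} \oplus \F_{D}^{\Tr=0}$ carries exactly the same graded dimension vector $(n,\dots,n,n-1)$ in degrees $1,\dots,n$. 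Hence both sides satisfy Poincaré duality in total degree $n^{2}-1$ with a one-dimensional top class, matching the symmetry visible in \Cref{tab:graded-coh-dims-SL3} and \Cref{tab:graded-coh-dims-central-div-alg-3-prime}. A first milestone would be to prove, uniformly in $n$, that the full bigraded Poincaré series of $H^{*}(\lie{g}_{I})$ and $H^{*}(\lie{g}_{D})$ coincide: for $\lie{g}_{D}$ this should follow from identifying it with the degree $\le n$ truncation of the commutator Lie algebra of the twisted polynomial ring $\F_{p^{n}}\{t\}$ (with $tx=x^{p}t$), which realises the bracket \eqref{eq:central-div-alg-com}, and invoking Ravenel's computation \cite{Ravenel}, while for $\lie{g}_{I}$ one would develop the analogous closed form by realising it as a truncation of the positive part of the loop algebra of $\lie{sl}_{n}$.

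Because the Remark after \Cref{rem:quaternion} shows $\lie{g}_{I} \not\iso \lie{g}_{D}$ already for $n=3$, the $n=2$ strategy of inducing the cohomology isomorphism from a Lie-algebra isomorphism is unavailable, and the ring isomorphism must be produced by other means. I would pursue two routes in parallel. The direct route is to compute generators and relations of both graded cohomology rings using the explicit Chevalley--Eilenberg description and the cup-product formula \eqref{eq:cup-prod-basis}, and then exhibit an explicit ring map; Ravenel's work gives a clean generators-and-relations answer on the division side, and the hope is that the root-combinatorial classes on the Iwahori side assemble into the same pattern. The bridge route is to find a single filtration on each of $\lie{g}_{I}$ and $\lie{g}_{D}$, refining the weight grading, whose associated graded Lie algebra is one common model $\lie{g}_{0}$ with the given dimension vector, and to show that both resulting Lie-algebra cohomology spectral sequences collapse, yielding $H^{*}(\lie{g}_{I}) \iso H^{*}(\lie{g}_{0}) \iso H^{*}(\lie{g}_{D})$ compatibly with cup products.

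The hard part will be exactly the cup products. As already noted for $n=3$, $p=5$, the products on $H^{*}((1+\idm_{D})^{\Nrd=1},\F_{5})$ are complicated enough that matching them against those on $H^{*}(I,\F_{5})$ is not transparent, so the core difficulty is to find a uniform-in-$n$ (and, for $p>n+1$, uniform-in-$p$) description of both multiplicative structures and a ring isomorphism between them; a degeneration argument by itself only bounds dimensions and will not settle the algebra structure. Two further technical obstacles must be cleared: proving that Sørensen's spectral sequence collapses at $E_{1}$ for all $n$, since the bidegree argument used for $n\le3$ fails for $n\ge4$ and a replacement using multiplicativity, the one-dimensional top class, and Poincaré duality would be needed; and checking that each $H^{n}(G,\F_{p})$, which for $n\ge3$ already spreads over several filtration steps, has split multiplicative filtration, so that the bigraded isomorphism upgrades to an honest graded-algebra isomorphism.
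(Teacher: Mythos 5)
The statement you are trying to prove is a \emph{conjecture} in the paper: the author offers no proof, only evidence, namely the fully worked $n=2$ case, the matching graded dimension tables (\Cref{tab:graded-coh-dims-SL3} and \Cref{tab:graded-coh-dims-central-div-alg-3-prime}) for $n=3$, $p=5$, and the explicit remark that the computed cup products in that case are too complicated to decide whether a ring isomorphism exists. So there is no paper proof to measure you against, and your text should be judged as a research program. As such it is sound and well aligned with the paper's framework: your reduction of the first bullet to the second via \Cref{prop:GLn-connection} and Künneth is exactly the one deduction the paper does make; your graded dimension count for $\lie{g}_{I}$ (the vector $(n,\dotsc,n,n-1)$ in degrees $1,\dotsc,n$, total $n^{2}-1$) is correct and matches $\lie{g}_{D} = \F_{D}^{\oplus(n-1)} \oplus \F_{D}^{\Tr=0}$; and you correctly absorb the paper's negative result that $\lie{g}_{I} \not\iso \lie{g}_{D}$ already for $n=3$, $p=5$, which rules out the $n=2$ mechanism of inducing the isomorphism from a Lie algebra isomorphism.

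But be clear that nothing in your proposal closes the conjecture; every load-bearing step is left open, and two deserve emphasis. First, the collapse of Sørensen's spectral sequence at $E_{1}$ for both groups and all $n$ is not merely unverified: the paper (citing \cite{thesis}) states that the bidegree argument used for $n\leq3$ already fails for the pro-$p$ Iwahori of $\SL_{4}(\Q_{p})$, so your proposed replacement via multiplicativity and Poincaré duality is a genuinely new theorem you would have to prove, not a routine check. Second, even granting collapse, you only obtain $\gr H^{*}(G,\F_{p})$ as a bigraded algebra; since for $n\geq3$ each $H^{m}$ spreads over several filtration steps (e.g.\ $H^{2} = H^{-3,5}\oplus H^{-4,6}$ in the $\SL_{3}$ case), the multiplicative extension problem you flag at the end is substantive, and the paper's own $n=3$ results describe only the graded cup products for this reason. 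Your appeal to \cite{Ravenel} for a ``clean generators-and-relations answer on the division side'' uniformly in $n$ is also optimistic: the paper invokes Ravenel only for the quaternion case $n=2$, and no such closed form for general $n$ (even with $p>n+1$) is supplied or cited. In short: your plan reproduces the paper's one proved reduction, correctly identifies the same obstacles the paper identifies, and proposes plausible attacks on them, but as a proof it has gaps at the collapse step, the dimension comparison beyond computed cases, the ring-isomorphism step, and the filtration-splitting step --- which is to say, precisely at every point where the statement remains a conjecture.
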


We note that the first part of the conjecture will follow from the second part by \Cref{prop:GLn-connection}.

\subsection{Serre spectral sequence}%
\label{subsec:Serre-spec-seq}

Another interesting research direction is to try to work with the Serre spectral sequence in the following way.

Assume we have the \enquote{standard} setup with $\gs{G} = \SL_{n}$, $\gs{U}$ unipotent upper triangular matrices and $\gs{T}$ diagonal matrices with determinant $1$. Let also $I \subseteq \SL_{n}(\Z_{p})$ be the pro-$p$ Iwahori subgroup of $\SL_{n}(\Q_{p})$ which is upper triangular and unipotent modulo $p$, and let
\begin{equation*}
  K \coloneqq \kernel\bigl( \red \colon \gs{G}(\Z_{p}) \to \gs{G}(\F_{p}) \bigr),
\end{equation*}
where $\red \colon \gs{G}(\Z_{p}) \to \gs{G}(\F_{p})$ is the reduction map. (Note that $I = \set{g \in \gs{G}(\Z_{p}) : \red(g) \in \gs{U}(\F_{p})}$ in this case, cf.\ \cite{Generators}.) Then
\begin{equation*}
  I/K \iso \gs{U}(\F_{p}),
\end{equation*}
and thus we get the Serre spectral sequence
\begin{equation*}
  E_{2}^{i,j} = H^{i}\bigl( \gs{U}(\F_{p}), H^{j}(K,\F_{p}) \bigr) \Longrightarrow H^{i+j}(I,\F_{p}),
\end{equation*}
which is also a multiplicative spectral sequence. Since $K$ is a uniformly powerful group (cf.\ \cite[Prop.~7.6]{SchOll-modular}), we know by \cite[p.~183]{Laz} that
\begin{equation*}
  H^{j}(K,\F_{p}) \iso \bigwedge^{j} \Hom_{\F_{p}}(K,\F_{p}).
\end{equation*}
Now we can let $\SL_{n}(\Z_{p})$ act by
\begin{equation*}
  (g \act f)(x) = f(g^{-1}xg)
\end{equation*}
for $g \in \SL_{n}(\Z_{p})$, $f \colon K \to \F_{p}$ and $x \in K$, and hope to split $\bigwedge^{j} \Hom_{\F_{p}}(K,\F_{p})$ into a direct sum of Verma modules $\bigoplus_{\lambda} V(\lambda)$ for $p$-restricted $\lambda$ ($\lambda$ with $0 \leq \inner{\lambda}{\alpha^{\vee}} \leq p-1$), similarly to what is done in \cite{PT}. This description might be easier to compare with a similar spectral sequence for $(1+\idm_{D})^{\Nrd=1}$, but it is harder to get started with since the spectral sequence is more complicated. One can hope that the difference in the spectral sequence might make it so that it will always collapse on the second page (the starting page in this case). We note that the author has recently been attempting to use this approach, but the fact that even $H^{*}(\gs{U}(\F_{p}), \F_{p})$ is not very well understood for $n>3$ makes this approach hard to generalize. The hope is that we can connect it directly with a spectral sequence for $(1+\idm_{D})^{\Nrd=1}$ without discussing where the spectral sequence collapses.


\printbibliography

\newpage

\appendix
\renewcommand{\theHsection}{A\arabic{section}}
\markboth{\scshape APPENDIX}{\scshape APPENDIX}

\section{Calculations for the \texorpdfstring{$I \subseteq \SL_{3}(\Q_{p})$}{Sl3(Qp)} case}%
\label{sec:calc}

In this appendix we will show some of the calculations for finding the cohomology of the pro-$p$ Iwahori subgroup $I$ of $\SL_{3}(\Q_{p})$.

\subsection{Finding the commutators \texorpdfstring{$[\xi_{i},\xi_{j}]$}{[xi-i,xi-j]}}%
\label{subsec:non-id-xi_ij-SL3}

Using the basis of \eqref{eq:gis-SL3} from \Cref{sec:Iwa-SL3}, we note that
\begin{equation*}
    g_{1}^{x_{1}}g_{2}^{x_{2}}g_{3}^{x_{3}}g_{4}^{x_{4}}g_{5}^{x_{5}}g_{6}^{x_{6}}g_{7}^{x_{7}}g_{8}^{x_{8}} = \pmat{ a_{11} & a_{12} & a_{13} \\ a_{21} & a_{22} & a_{23} \\ a_{31} & a_{32} & a_{33}},
\end{equation*}
where
\begin{equation}
  \label{eq:gixi-SL3}
  \begin{aligned}
    a_{11} &= \exp(px_{4}), \\
    a_{12} &= x_{7}\exp(px_{4}), \\
    a_{13} &= x_{8}\exp(px_{4}), \\
    a_{21} &= px_{2}\exp(px_{4}), \\
    a_{22} &= px_{2}x_{7}\exp(px_{4}) + \exp\bigl( p(x_{5}-x_{4}) \bigr), \\
    a_{23} &= px_{2}x_{8}\exp(px_{4}) + x_{6}\exp\bigl( p(x_{5}-x_{4}) \bigr), \\
    a_{31} &= px_{1}\exp(px_{4}), \\
    a_{32} &= px_{1}x_{7}\exp(px_{4}) + px_{3}\exp\bigl( p(x_{5}-x_{4}) \bigr), \\
    a_{33} &= px_{1}x_{8}\exp(px_{4}) + px_{3}x_{6}\exp\bigl( p(x_{5}-x_{4}) \bigr) + \exp(-px_{5}).
  \end{aligned}
\end{equation}

Writing $g_{ij} = [g_{i},g_{j}]$ and $\xi_{ij} = [\xi_{i},\xi_{j}]$, we are now ready to find $x_{1},\dotsc,x_{8}$ such that $g_{ij} = g_{1}^{x_{1}} \dotsb g_{8}^{x_{8}}$ for different $i<j$. (In the following we use that $\frac{1}{p-1} = 1 + p + p^{2} + \dotsb$ and $\log(1-p) = -p - \frac{p^{2}}{2} - \frac{p^{3}}{3} - \dotsb$.) Also, except in the first case, we will note that $x_{k} \in p\Z_{p}$ implies that the coefficient on $\xi_{k}$ in $\xi_{ij}$ is zero.

We now list all non-identity commutators $g_{ij} = [g_{i},g_{j}]$ and find $\xi_{ij} = [\xi_{i},\xi_{j}]$ based on these. (For $g_{ij} = 1_{3}$ it is clear that $x_{1} = \cdots = x_{8} = 0$, and thus $\xi_{ij} = 0$.)

\begin{description}
  \item[$g_{14} = \pmat{1 \\ & 1 \\ p\bigl( 1-\exp(-p) \bigr) && 1}$] Comparing $g_{14}$ with \eqref{eq:gixi-SL3}, we see that $x_{2} = x_{4} = x_{7} = x_{8} = 0$, and thus also $x_{3} = x_{5} = x_{6} = 0$. This leaves $a_{31} = px_{1} = p\bigl( 1-\exp(-p) \bigr) = p^{2} + O(p^{3})$, which implies that $x_{1} = p + O(p^{2})$. Hence $\sigma(g_{14}) = \pi \act \sigma(g_{1})$, which implies that $\xi_{14} = 0$.

  \item[$g_{15} = \pmat{1 \\ & 1 \\ p\bigl( 1-\exp(-p) \bigr) && 1}$] Since $g_{15} = g_{14}$, the above calculation shows that $\xi_{15} = 0$.

  \item[$g_{16} = \pmat{1 \\ -p & 1 \\ && 1}$] Comparing $g_{16}$ with \eqref{eq:gixi-SL3}, we see that $x_{1} = x_{4} = x_{7} = x_{8} = 0$, and thus also $x_{3} = x_{5} = x_{6} = 0$. This leaves $a_{21} = px_{2} = -p$, which implies that $x_{2} = -1$. Hence $\sigma(g_{16}) = -\sigma(g_{2})$, which implies that $\xi_{16} = -\xi_{2}$.

  \item[$g_{17} = \pmat{1 \\ & 1 \\ & p & 1}$] Comparing $g_{17}$ with \eqref{eq:gixi-SL3}, we see that $x_{1} = x_{2} = x_{4} = x_{7} = x_{8} = 0$, and thus also $x_{5} = x_{6} = 0$. This leaves $a_{32} = px_{3} = p$, which implies that $x_{3} = 1$. Hence $\sigma(g_{17}) = \sigma(g_{3})$, which implies that $\xi_{17} = \xi_{3}$.

  \item[$g_{18} = \pmat{1-p && p \\ & 1 \\ -p^{2} && 1+p+p^{2}}$] Comparing $g_{18}$ with \eqref{eq:gixi-SL3}, we see that $x_{2} = x_{7} = 0$, and thus also $x_{3} = x_{6} = 0$ and $x_{4} = x_{5}$. Using
        \begin{align*}
          a_{11} &= \exp(px_{4}) = 1-p, \\
          a_{13} &= x_{8}\exp(px_{4}) = x_{8}(1-p) = p, \\
          a_{31} &= px_{1}\exp(px_{4}) = px_{1}(1-p) = -p^{2},
        \end{align*}
        we get that
        \begin{align*}
          x_{4} &= \dfrac{1}{p}\log(1-p) = \dfrac{1}{p}\bigl( (-p) + O(p^{2}) \bigr) = -1 + O(p), \\
          x_{8} &= \dfrac{p}{1-p} = p + O(p^{2}), \\
          x_{1} &= \dfrac{-p^{2}}{p(1-p)} = -p + O(p^{2}).
        \end{align*}
        Hence $\sigma(g_{18}) = -\pi \act \sigma(g_{1}) - \sigma(g_{4}) - \sigma(g_{5}) + \pi \act \sigma(g_{8})$, which implies that $\xi_{18} = -(\xi_{4}+\xi_{5})$.

  \item[$g_{23} = \pmat{1 \\ & 1 \\ -p^{2} && 1}$] Comparing $g_{23}$ with \eqref{eq:gixi-SL3}, we see that $x_{2} = x_{4} = x_{7} = x_{8} = 0$, and thus also $x_{3} = x_{5} = x_{6} = 0$. This leaves $a_{31} = px_{1} = -p^{2}$, which implies that $x_{1} = -p$. Hence $\sigma(g_{23}) = -\pi \act \sigma(g_{1})$, which implies that $\xi_{23} = 0$.

  \item[$g_{24} = \pmat{1 \\ p\bigl( 1-\exp(-2p) \bigr) & 1 \\ && 1}$] Comparing $g_{24}$ with \eqref{eq:gixi-SL3}, we see that $x_{1} = x_{4} = x_{7} = x_{8} = 0$, and thus also $x_{3} = x_{5} = x_{6} = 0$. This leaves $a_{21} = px_{2} = p\bigl( 1-\exp(-2p) \bigr) = p\bigl( 1-\bigl( 1+(-2p)+O(p^{2}) \bigr) \bigr) = 2p^{2} + O(p^{3})$, which implies that $x_{2} = 2p + O(p^{2})$. Hence $\sigma(g_{24}) = 2\pi \act \sigma(g_{1})$, which implies that $\xi_{24} = 0$.

  \item[$g_{25} = \pmat{1 \\ p\bigl( 1-\exp(p) \bigr) & 1 \\ && 1}$] Except a factor $-2$ in the exponential, which clearly does not change the final result, we have the same calculation as for $g_{24}$. Thus $\xi_{25} = 0$.

  \item[$g_{27} = \pmat{ 1-p & p \\ -p^{2} & 1+p+p^{2} \\ && 1}$] Comparing $g_{27}$ with \eqref{eq:gixi-SL3}, we see that $x_{1} = x_{8} = 0$, and thus also $x_{3} = x_{6} = 0$, so $x_{5} = 0$. Using
        \begin{align*}
          a_{11} &= \exp(px_{4}) = 1-p, \\
          a_{12} &= x_{7}\exp(px_{4}) = x_{8}(1-p) = p, \\
          a_{21} &= px_{2}\exp(px_{4}) = px_{2}(1-p) = -p^{2},
        \end{align*}
        we get that
        \begin{align*}
          x_{4} &= \dfrac{1}{p}\log(1-p) = \dfrac{1}{p}\bigl( (-p) + O(p^{2}) \bigr) = -1 + O(p), \\
          x_{7} &= \dfrac{p}{1-p} = p + O(p^{2}), \\
          x_{2} &= \dfrac{-p^{2}}{p(1-p)} = -p + O(p^{2}).
        \end{align*}
        Hence $\sigma(g_{27}) = -\pi \act \sigma(g_{2}) - \sigma(g_{4}) + \pi \act \sigma(g_{7})$, which implies that $\xi_{27} = -\xi_{4}$.

  \item[$g_{28} = \pmat{ 1 \\ & 1 & p \\ && 1}$] Comparing $g_{28}$ with \eqref{eq:gixi-SL3}, we see that $x_{1} = x_{2} = x_{4} = x_{7} = x_{8} = 0$, and thus also $x_{3} = x_{5} = 0$. This leaves $a_{23} = x_{6} = p$. Hence $\sigma(g_{28}) = \pi \act \sigma(g_{6})$, which implies that $\xi_{28} = 0$.

  \item[$g_{34} = \pmat{ 1 \\ & 1 \\ & p\bigl( 1-\exp(p) \bigr) & 1}$] Comparing $g_{34}$ with \eqref{eq:gixi-SL3}, we see that $x_{1} = x_{2} = x_{4} = x_{7} = x_{8} = 0$, and thus also $x_{5} = x_{6} = 0$. This leaves $a_{32} = px_{3} = p\bigl( 1-\exp(p) \bigr) = p\bigl( 1-\bigl( 1+p+O(p^{2}) \bigr) \bigr) = -p^{2} + O(p^{3})$, which implies that $x_{3} = -p + O(p^{2})$. Hence $\sigma(g_{34}) = -\pi \act \sigma(g_{3})$, which implies that $\xi_{34} = 0$.

  \item[$g_{35} = \pmat{ 1 \\ & 1 \\ & p\bigl( 1-\exp(-2p) \bigr) & 1}$] Except a factor $-2$ in the exponential, which clearly does not change the final result, we have the same calculation as for $g_{34}$. Thus $\xi_{35} = 0$.

  \item[$g_{36} = \pmat{ 1 \\ & 1-p & p \\ & -p^{2} & 1+p+p^{2}}$] Comparing $g_{36}$ with \eqref{eq:gixi-SL3}, we see that $x_{1} = x_{2} = x_{4} = x_{7} = x_{8} = 0$. Using
        \begin{align*}
          a_{22} &= \exp(px_{5}) = 1-p, \\
          a_{23} &= x_{6}\exp(px_{5}) = x_{6}(1-p) = p, \\
          a_{32} &= px_{3}\exp(px_{5}) = px_{3}(1-p) = -p^{2},
        \end{align*}
        we get that
        \begin{align*}
          x_{5} &= \dfrac{1}{p}\log(1-p) = \dfrac{1}{p}\bigl( (-p) + O(p^{2}) \bigr) = -1 + O(p), \\
          x_{6} &= \dfrac{p}{1-p} = p + O(p^{2}), \\
          x_{3} &= \dfrac{-p^{2}}{p(1-p)} = -p + O(p^{2}).
        \end{align*}
        Hence $\sigma(g_{36}) = -\pi \act \sigma(g_{3}) - \sigma(g_{5}) + \pi \act \sigma(g_{6})$, which implies that $\xi_{36} = -\xi_{5}$.

  \item[$g_{38} = \pmat{ 1 & -p \\ & 1 \\ && 1}$] Comparing $g_{38}$ with \eqref{eq:gixi-SL3}, we see that $x_{1} = x_{2} = x_{4} = x_{8} = 0$, and thus also $x_{3} = x_{5} = x_{6} = 0$. This leaves $a_{12} = x_{7} = -p$. Hence $\sigma(g_{38}) = -\pi \act \sigma(g_{3})$, which implies that $\xi_{38} = 0$.

  \item[$g_{46} = \pmat{ 1 \\ & 1 & \exp(-p)-1 \\ && 1}$] Comparing $g_{46}$ with \eqref{eq:gixi-SL3}, we see that $x_{1} = x_{2} = x_{4} = x_{7} = x_{8} = 0$, and thus also $x_{3} = x_{5} = 0$. This leaves $a_{23} = x_{6} = \exp(-p) - 1 = -p + O(p^{2})$. Hence $\sigma(g_{46}) = -\pi \act \sigma(g_{6})$, which implies that $\xi_{46} = 0$.

  \item[$g_{47} = \pmat{ 1 & \exp(2p)-1 \\ & 1 \\ && 1}$] Comparing $g_{47}$ with \eqref{eq:gixi-SL3}, we see that $x_{1} = x_{2} = x_{4} = x_{8} = 0$, and thus also $x_{3} = x_{5} = x_{6} = 0$. This leaves $a_{12} = x_{7} = \exp(2p) - 1 = 2p + O(p^{2})$. Hence $\sigma(g_{47}) = 2\pi \act \sigma(g_{7})$, which implies that $\xi_{47} = 0$.

  \item[$g_{48} = \pmat{ 1 && \exp(p)-1 \\ & 1 \\ && 1}$] Comparing $g_{48}$ with \eqref{eq:gixi-SL3}, we see that $x_{1} = x_{2} = x_{4} = x_{7} = 0$, and thus also $x_{3} = x_{5} = x_{6} = 0$. This leaves $a_{13} = x_{8} = \exp(p) - 1 = p + O(p^{2})$. Hence $\sigma(g_{48}) = \pi \act \sigma(g_{8})$, which implies that $\xi_{48} = 0$.

  \item[$g_{56} = \pmat{ 1 \\ & 1 & \exp(2p)-1 \\ && 1}$] Except a factor $-2$ in the exponential, which clearly does not change the final result, we have the same calculation as for $g_{46}$. Thus $\xi_{56} = 0$.

  \item[$g_{57} = \pmat{ 1 & \exp(-p)-1 \\ & 1 \\ && 1}$] Except a factor $-2$ in the exponential, which clearly does not change the final result, we have the same calculation as for $g_{47}$. Thus $\xi_{57} = 0$.

  \item[$g_{58} = \pmat{ 1 && \exp(p)-1 \\ & 1 \\ && 1}$] Since $g_{58} = g_{48}$, the above calculation shows that $\xi_{58} = 0$.

  \item[$g_{67} = \pmat{ 1 && -1 \\ & 1 \\ && 1}$] Comparing $g_{67}$ with \eqref{eq:gixi-SL3}, we see that $x_{1} = x_{2} = x_{4} = x_{7} = 0$, and thus also $x_{3} = x_{5} = x_{6} = 0$. This leaves $a_{13} = x_{8} = -1$. Hence $\sigma(g_{67}) = -\sigma(g_{8})$, which implies that $\xi_{67} = -\xi_{8}$.
\end{description}

Thus the non-zero commutators $[\xi_{i},\xi_{j}]$ with $i<j$ are:
\begin{equation}
  \label{eq:xi_ij-SL3}
  \begin{aligned}
    [\xi_{1},\xi_{6}] &= -\xi_{2}, & [\xi_{1},\xi_{7}] &= \xi_{3}, & [\xi_{1},\xi_{8}] &= -(\xi_{4}+\xi_{5}), \\
    [\xi_{2},\xi_{7}] &= -\xi_{4}, & [\xi_{3},\xi_{6}] &= -\xi_{5}, & [\xi_{6},\xi_{7}] &= -\xi_{8}.
  \end{aligned}
\end{equation}

Looking at \eqref{eq:Iwa-p-val-basis-SLn} (with $e=1$ and $h=3$), we see that
\begin{align*}
  \omega(g_{1}) &= 1-\frac{2}{3} = \frac{1}{3}, & \omega(g_{2}) &= 1-\frac{1}{3} = \frac{2}{3}, & \omega(g_{3}) &= 1-\frac{1}{3} = \frac{2}{3}, \\
  \omega(g_{4}) &= 1, & \omega(g_{5}) &= 1, & \omega(g_{6}) &= \frac{1}{3}, \\
  \omega(g_{7}) &= \frac{1}{3}, & \omega(g_{8}) &= \frac{2}{3}.
\end{align*}
Hence
\begin{equation*}
  \lie{g} = \F_{p} \otimes_{\F_{p}[\pi]} \gr I = \Span_{\F_{p}}(\xi_{1},\dotsc,\xi_{8}) = \lie{g}^{1} \oplus \lie{g}^{2} \oplus \lie{g}^{3},
\end{equation*}
where
\begin{align*}
  \lie{g}^{1} &= \lie{g}_{\frac{1}{3}} = \Span_{\F_{p}}(\xi_{1},\xi_{6},\xi_{7}), \\
  \lie{g}^{2} &= \lie{g}_{\frac{2}{3}} = \Span_{\F_{p}}(\xi_{2},\xi_{3},\xi_{8}), \\
  \lie{g}^{3} &= \lie{g}_{1} = \Span_{\F_{p}}(\xi_{4},\xi_{5}).
\end{align*}
See \Cref{rem:g-Z-grading} for more details.

\subsection{Cup products}%
\label{subsec:cup-products}

For each non-zero $H^{s,t} = H^{s,t}(\lie{g},\F_{p})$, write $(v_{1}^{(s,t)},\dotsc,v_{m}^{(s,t)})$ for the basis of $H^{s,t}$ (with $m = \dim_{\F_{p}} H^{s,t}$). Then our calculations in \cite{new-code} show that we can describe all non-zero cup products by the following formulas.

\vspace*{1em}

\begin{multicols}{2}

\begin{center}
  \scalebox{1.2}{$H^{-1,2} \cup H^{-3,5} \subseteq H^{-4,7}$}
\end{center}

\begin{align*}
v_{1}^{(-1,2)} \cup v_{3}^{(-3,5)} &= v_{1}^{(-4,7)} \\
v_{1}^{(-1,2)} \cup v_{5}^{(-3,5)} &= v_{2}^{(-4,7)} \\
v_{2}^{(-1,2)} \cup v_{1}^{(-3,5)} &= -v_{1}^{(-4,7)} \\
v_{2}^{(-1,2)} \cup v_{6}^{(-3,5)} &= v_{3}^{(-4,7)} \\
v_{3}^{(-1,2)} \cup v_{2}^{(-3,5)} &= -v_{2}^{(-4,7)} \\
v_{3}^{(-1,2)} \cup v_{4}^{(-3,5)} &= -v_{3}^{(-4,7)}
\end{align*}
\vspace*{1em}

\begin{center}
  \scalebox{1.2}{$H^{-1,2} \cup H^{-4,6} \subseteq H^{-5,8}$}
\end{center}

\begin{align*}
v_{1}^{(-1,2)} \cup v_{1}^{(-4,6)} &= -v_{4}^{(-5,8)} \\
v_{1}^{(-1,2)} \cup v_{2}^{(-4,6)} &= -2v_{1}^{(-5,8)} \\
v_{1}^{(-1,2)} \cup v_{3}^{(-4,6)} &= 2v_{2}^{(-5,8)} \\
v_{2}^{(-1,2)} \cup v_{1}^{(-4,6)} &= -2v_{1}^{(-5,8)} \\
v_{2}^{(-1,2)} \cup v_{2}^{(-4,6)} &= v_{5}^{(-5,8)} \\
v_{2}^{(-1,2)} \cup v_{3}^{(-4,6)} &= -2v_{3}^{(-5,8)} \\
v_{3}^{(-1,2)} \cup v_{1}^{(-4,6)} &= -2v_{2}^{(-5,8)} \\
v_{3}^{(-1,2)} \cup v_{2}^{(-4,6)} &= 2v_{3}^{(-5,8)} \\
v_{3}^{(-1,2)} \cup v_{3}^{(-4,6)} &= v_{6}^{(-5,8)}
\end{align*}
\vspace*{1em}

\begin{center}
  \scalebox{1.2}{$H^{-1,2} \cup H^{-6,9} \subseteq H^{-7,11}$}
\end{center}

\begin{align*}
v_{1}^{(-1,2)} \cup v_{3}^{(-6,9)} &= 3v_{3}^{(-7,11)} \\
v_{1}^{(-1,2)} \cup v_{4}^{(-6,9)} &= v_{1}^{(-7,11)} - v_{2}^{(-7,11)} \\
v_{1}^{(-1,2)} \cup v_{5}^{(-6,9)} &= -v_{4}^{(-7,11)} \\
v_{1}^{(-1,2)} \cup v_{6}^{(-6,9)} &= v_{5}^{(-7,11)} - v_{6}^{(-7,11)} \\
v_{1}^{(-1,2)} \cup v_{7}^{(-6,9)} &= v_{7}^{(-7,11)} \\
v_{2}^{(-1,2)} \cup v_{1}^{(-6,9)} &= -v_{1}^{(-7,11)} \\
v_{2}^{(-1,2)} \cup v_{2}^{(-6,9)} &= v_{3}^{(-7,11)} \\
v_{2}^{(-1,2)} \cup v_{3}^{(-6,9)} &= 3v_{4}^{(-7,11)} \\
v_{2}^{(-1,2)} \cup v_{6}^{(-6,9)} &= v_{7}^{(-7,11)} \\
v_{2}^{(-1,2)} \cup v_{7}^{(-6,9)} &= v_{8}^{(-7,11)} \\
v_{3}^{(-1,2)} \cup v_{1}^{(-6,9)} &= v_{3}^{(-7,11)} \\
v_{3}^{(-1,2)} \cup v_{2}^{(-6,9)} &= -v_{6}^{(-7,11)} \\
v_{3}^{(-1,2)} \cup v_{3}^{(-6,9)} &= -3v_{7}^{(-7,11)} \\
v_{3}^{(-1,2)} \cup v_{4}^{(-6,9)} &= v_{4}^{(-7,11)} \\
v_{3}^{(-1,2)} \cup v_{5}^{(-6,9)} &= -v_{9}^{(-7,11)}
\end{align*}
\vspace*{1em}

\begin{center}
  \scalebox{1.2}{$H^{-1,2} \cup H^{-8,12} \subseteq H^{-9,14}$}
\end{center}

\begin{align*}
v_{1}^{(-1,2)} \cup v_{1}^{(-8,12)} &= v_{2}^{(-9,14)} \\
v_{1}^{(-1,2)} \cup v_{2}^{(-8,12)} &= -v_{4}^{(-9,14)} \\
v_{1}^{(-1,2)} \cup v_{3}^{(-8,12)} &= 3v_{1}^{(-9,14)} \\
v_{1}^{(-1,2)} \cup v_{6}^{(-8,12)} &= v_{3}^{(-9,14)} \\
v_{1}^{(-1,2)} \cup v_{7}^{(-8,12)} &= v_{3}^{(-9,14)} \\
v_{1}^{(-1,2)} \cup v_{8}^{(-8,12)} &= v_{5}^{(-9,14)} \\
v_{1}^{(-1,2)} \cup v_{9}^{(-8,12)} &= v_{5}^{(-9,14)} \\
v_{2}^{(-1,2)} \cup v_{1}^{(-8,12)} &= -v_{3}^{(-9,14)} \\
v_{2}^{(-1,2)} \cup v_{2}^{(-8,12)} &= -3v_{1}^{(-9,14)} \\
v_{2}^{(-1,2)} \cup v_{3}^{(-8,12)} &= -v_{6}^{(-9,14)} \\
v_{2}^{(-1,2)} \cup v_{5}^{(-8,12)} &= -v_{2}^{(-9,14)} \\
v_{2}^{(-1,2)} \cup v_{9}^{(-8,12)} &= v_{7}^{(-9,14)} \\
v_{3}^{(-1,2)} \cup v_{1}^{(-8,12)} &= 3v_{1}^{(-9,14)} \\
v_{3}^{(-1,2)} \cup v_{2}^{(-8,12)} &= -v_{5}^{(-9,14)} \\
v_{3}^{(-1,2)} \cup v_{3}^{(-8,12)} &= -v_{7}^{(-9,14)} \\
v_{3}^{(-1,2)} \cup v_{4}^{(-8,12)} &= -v_{4}^{(-9,14)} \\
v_{3}^{(-1,2)} \cup v_{6}^{(-8,12)} &= -v_{6}^{(-9,14)}
\end{align*}
\vspace*{1em}

\begin{center}
  \scalebox{1.2}{$H^{-1,2} \cup H^{-10,15} \subseteq H^{-11,17}$}
\end{center}

\begin{align*}
v_{1}^{(-1,2)} \cup v_{3}^{(-10,15)} &= 2v_{1}^{(-11,17)} \\
v_{1}^{(-1,2)} \cup v_{5}^{(-10,15)} &= -2v_{2}^{(-11,17)} \\
v_{1}^{(-1,2)} \cup v_{6}^{(-10,15)} &= v_{3}^{(-11,17)} \\
v_{2}^{(-1,2)} \cup v_{2}^{(-10,15)} &= 2v_{1}^{(-11,17)} \\
v_{2}^{(-1,2)} \cup v_{4}^{(-10,15)} &= -v_{2}^{(-11,17)} \\
v_{2}^{(-1,2)} \cup v_{5}^{(-10,15)} &= -2v_{3}^{(-11,17)} \\
v_{3}^{(-1,2)} \cup v_{1}^{(-10,15)} &= v_{1}^{(-11,17)} \\
v_{3}^{(-1,2)} \cup v_{2}^{(-10,15)} &= 2v_{2}^{(-11,17)} \\
v_{3}^{(-1,2)} \cup v_{3}^{(-10,15)} &= 2v_{3}^{(-11,17)}
\end{align*}
\vspace*{1em}

\begin{center}
  \scalebox{1.2}{$H^{-1,2} \cup H^{-11,16} \subseteq H^{-12,18}$}
\end{center}

\begin{align*}
v_{1}^{(-1,2)} \cup v_{2}^{(-11,16)} &= v_{2}^{(-12,18)} \\
v_{1}^{(-1,2)} \cup v_{3}^{(-11,16)} &= v_{4}^{(-12,18)} \\
v_{2}^{(-1,2)} \cup v_{1}^{(-11,16)} &= -v_{1}^{(-12,18)} \\
v_{2}^{(-1,2)} \cup v_{3}^{(-11,16)} &= v_{6}^{(-12,18)} \\
v_{3}^{(-1,2)} \cup v_{1}^{(-11,16)} &= -v_{3}^{(-12,18)} \\
v_{3}^{(-1,2)} \cup v_{2}^{(-11,16)} &= -v_{5}^{(-12,18)}
\end{align*}
\vspace*{1em}

\begin{center}
  \scalebox{1.2}{$H^{-1,2} \cup H^{-14,21} \subseteq H^{-15,23}$}
\end{center}

\begin{align*}
v_{1}^{(-1,2)} \cup v_{3}^{(-14,21)} &= v_{1}^{(-15,23)} \\
v_{2}^{(-1,2)} \cup v_{2}^{(-14,21)} &= -v_{1}^{(-15,23)} \\
v_{3}^{(-1,2)} \cup v_{1}^{(-14,21)} &= v_{1}^{(-15,23)}
\end{align*}
\vspace*{1em}

\begin{center}
  \scalebox{1.2}{$H^{-3,5} \cup H^{-4,6} \subseteq H^{-7,11}$}
\end{center}

\begin{align*}
v_{1}^{(-3,5)} \cup v_{2}^{(-4,6)} &= -v_{1}^{(-7,11)} + 2v_{2}^{(-7,11)} \\
v_{1}^{(-3,5)} \cup v_{3}^{(-4,6)} &= 3v_{3}^{(-7,11)} \\
v_{2}^{(-3,5)} \cup v_{2}^{(-4,6)} &= -3v_{3}^{(-7,11)} \\
v_{2}^{(-3,5)} \cup v_{3}^{(-4,6)} &= -2v_{5}^{(-7,11)} + v_{6}^{(-7,11)} \\
v_{3}^{(-3,5)} \cup v_{1}^{(-4,6)} &= v_{1}^{(-7,11)} + v_{2}^{(-7,11)} \\
v_{3}^{(-3,5)} \cup v_{3}^{(-4,6)} &= 3v_{4}^{(-7,11)} \\
v_{4}^{(-3,5)} \cup v_{1}^{(-4,6)} &= -3v_{4}^{(-7,11)} \\
v_{4}^{(-3,5)} \cup v_{3}^{(-4,6)} &= -2v_{8}^{(-7,11)} + v_{9}^{(-7,11)} \\
v_{5}^{(-3,5)} \cup v_{1}^{(-4,6)} &= v_{5}^{(-7,11)} + v_{6}^{(-7,11)} \\
v_{5}^{(-3,5)} \cup v_{2}^{(-4,6)} &= 3v_{7}^{(-7,11)} \\
v_{6}^{(-3,5)} \cup v_{1}^{(-4,6)} &= 3v_{7}^{(-7,11)} \\
v_{6}^{(-3,5)} \cup v_{2}^{(-4,6)} &= v_{8}^{(-7,11)} - 2v_{9}^{(-7,11)}
\end{align*}
\vspace*{1em}

\begin{center}
  \scalebox{1.2}{$H^{-3,5} \cup H^{-6,9} \subseteq H^{-9,14}$}
\end{center}

\begin{align*}
v_{1}^{(-3,5)} \cup v_{3}^{(-6,9)} &= v_{2}^{(-9,14)} \\
v_{1}^{(-3,5)} \cup v_{5}^{(-6,9)} &= -v_{3}^{(-9,14)} \\
v_{1}^{(-3,5)} \cup v_{6}^{(-6,9)} &= -v_{4}^{(-9,14)} \\
v_{1}^{(-3,5)} \cup v_{7}^{(-6,9)} &= v_{1}^{(-9,14)} \\
v_{2}^{(-3,5)} \cup v_{3}^{(-6,9)} &= -v_{4}^{(-9,14)} \\
v_{2}^{(-3,5)} \cup v_{4}^{(-6,9)} &= -v_{2}^{(-9,14)} \\
v_{2}^{(-3,5)} \cup v_{5}^{(-6,9)} &= -v_{1}^{(-9,14)} \\
v_{2}^{(-3,5)} \cup v_{7}^{(-6,9)} &= -v_{5}^{(-9,14)} \\
v_{3}^{(-3,5)} \cup v_{2}^{(-6,9)} &= v_{2}^{(-9,14)} \\
v_{3}^{(-3,5)} \cup v_{3}^{(-6,9)} &= v_{3}^{(-9,14)} \\
v_{3}^{(-3,5)} \cup v_{6}^{(-6,9)} &= -v_{1}^{(-9,14)} \\
v_{3}^{(-3,5)} \cup v_{7}^{(-6,9)} &= -v_{6}^{(-9,14)} \\
v_{4}^{(-3,5)} \cup v_{1}^{(-6,9)} &= -v_{3}^{(-9,14)} \\
v_{4}^{(-3,5)} \cup v_{2}^{(-6,9)} &= -v_{1}^{(-9,14)} \\
v_{4}^{(-3,5)} \cup v_{3}^{(-6,9)} &= -v_{6}^{(-9,14)} \\
v_{4}^{(-3,5)} \cup v_{6}^{(-6,9)} &= -v_{7}^{(-9,14)} \\
v_{5}^{(-3,5)} \cup v_{1}^{(-6,9)} &= -v_{4}^{(-9,14)} \\
v_{5}^{(-3,5)} \cup v_{3}^{(-6,9)} &= v_{5}^{(-9,14)} \\
v_{5}^{(-3,5)} \cup v_{4}^{(-6,9)} &= -v_{1}^{(-9,14)} \\
v_{5}^{(-3,5)} \cup v_{5}^{(-6,9)} &= v_{7}^{(-9,14)} \\
v_{6}^{(-3,5)} \cup v_{1}^{(-6,9)} &= v_{1}^{(-9,14)} \\
v_{6}^{(-3,5)} \cup v_{2}^{(-6,9)} &= -v_{5}^{(-9,14)} \\
v_{6}^{(-3,5)} \cup v_{3}^{(-6,9)} &= v_{7}^{(-9,14)} \\
v_{6}^{(-3,5)} \cup v_{4}^{(-6,9)} &= -v_{6}^{(-9,14)}
\end{align*}
\vspace*{1em}

\begin{center}
  \scalebox{1.2}{$H^{-3,5} \cup H^{-8,12} \subseteq H^{-11,17}$}
\end{center}

\begin{align*}
v_{1}^{(-3,5)} \cup v_{3}^{(-8,12)} &= 3v_{1}^{(-11,17)} \\
v_{1}^{(-3,5)} \cup v_{8}^{(-8,12)} &= 2v_{2}^{(-11,17)} \\
v_{1}^{(-3,5)} \cup v_{9}^{(-8,12)} &= v_{2}^{(-11,17)} \\
v_{2}^{(-3,5)} \cup v_{3}^{(-8,12)} &= 3v_{2}^{(-11,17)} \\
v_{2}^{(-3,5)} \cup v_{6}^{(-8,12)} &= -v_{1}^{(-11,17)} \\
v_{2}^{(-3,5)} \cup v_{7}^{(-8,12)} &= -2v_{1}^{(-11,17)} \\
v_{3}^{(-3,5)} \cup v_{2}^{(-8,12)} &= -3v_{1}^{(-11,17)} \\
v_{3}^{(-3,5)} \cup v_{8}^{(-8,12)} &= v_{3}^{(-11,17)} \\
v_{3}^{(-3,5)} \cup v_{9}^{(-8,12)} &= -v_{3}^{(-11,17)} \\
v_{4}^{(-3,5)} \cup v_{2}^{(-8,12)} &= -3v_{3}^{(-11,17)} \\
v_{4}^{(-3,5)} \cup v_{4}^{(-8,12)} &= -v_{1}^{(-11,17)} \\
v_{4}^{(-3,5)} \cup v_{5}^{(-8,12)} &= -2v_{1}^{(-11,17)} \\
v_{5}^{(-3,5)} \cup v_{1}^{(-8,12)} &= 3v_{2}^{(-11,17)} \\
v_{5}^{(-3,5)} \cup v_{6}^{(-8,12)} &= v_{3}^{(-11,17)} \\
v_{5}^{(-3,5)} \cup v_{7}^{(-8,12)} &= -v_{3}^{(-11,17)} \\
v_{6}^{(-3,5)} \cup v_{1}^{(-8,12)} &= 3v_{3}^{(-11,17)} \\
v_{6}^{(-3,5)} \cup v_{4}^{(-8,12)} &= -2v_{2}^{(-11,17)} \\
v_{6}^{(-3,5)} \cup v_{5}^{(-8,12)} &= -v_{2}^{(-11,17)}
\end{align*}
\vspace*{1em}

\begin{center}
  \scalebox{1.2}{$H^{-3,5} \cup H^{-11,16} \subseteq H^{-14,21}$}
\end{center}

\begin{align*}
v_{1}^{(-3,5)} \cup v_{3}^{(-11,16)} &= -v_{2}^{(-14,21)} \\
v_{2}^{(-3,5)} \cup v_{2}^{(-11,16)} &= v_{1}^{(-14,21)} \\
v_{3}^{(-3,5)} \cup v_{3}^{(-11,16)} &= -v_{3}^{(-14,21)} \\
v_{4}^{(-3,5)} \cup v_{1}^{(-11,16)} &= v_{1}^{(-14,21)} \\
v_{5}^{(-3,5)} \cup v_{2}^{(-11,16)} &= -v_{3}^{(-14,21)} \\
v_{6}^{(-3,5)} \cup v_{1}^{(-11,16)} &= v_{2}^{(-14,21)}
\end{align*}
\vspace*{1em}

\begin{center}
  \scalebox{1.2}{$H^{-3,5} \cup H^{-12,18} \subseteq H^{-15,23}$}
\end{center}

\begin{align*}
v_{1}^{(-3,5)} \cup v_{6}^{(-12,18)} &= v_{1}^{(-15,23)} \\
v_{2}^{(-3,5)} \cup v_{5}^{(-12,18)} &= -v_{1}^{(-15,23)} \\
v_{3}^{(-3,5)} \cup v_{4}^{(-12,18)} &= -v_{1}^{(-15,23)} \\
v_{4}^{(-3,5)} \cup v_{3}^{(-12,18)} &= -v_{1}^{(-15,23)} \\
v_{5}^{(-3,5)} \cup v_{2}^{(-12,18)} &= -v_{1}^{(-15,23)} \\
v_{6}^{(-3,5)} \cup v_{1}^{(-12,18)} &= v_{1}^{(-15,23)}
\end{align*}
\vspace*{1em}

\begin{center}
  \scalebox{1.2}{$H^{-4,6} \cup H^{-4,6} \subseteq H^{-8,12}$}
\end{center}

\begin{align*}
v_{1}^{(-4,6)} \cup v_{1}^{(-4,6)} &= -2v_{4}^{(-8,12)} - 2v_{5}^{(-8,12)} \\
v_{1}^{(-4,6)} \cup v_{2}^{(-4,6)} &= 2v_{1}^{(-8,12)} \\
v_{1}^{(-4,6)} \cup v_{3}^{(-4,6)} &= 2v_{2}^{(-8,12)} \\
v_{2}^{(-4,6)} \cup v_{1}^{(-4,6)} &= 2v_{1}^{(-8,12)} \\
v_{2}^{(-4,6)} \cup v_{2}^{(-4,6)} &= 2v_{6}^{(-8,12)} - 4v_{7}^{(-8,12)} \\
v_{2}^{(-4,6)} \cup v_{3}^{(-4,6)} &= -2v_{3}^{(-8,12)} \\
v_{3}^{(-4,6)} \cup v_{1}^{(-4,6)} &= 2v_{2}^{(-8,12)} \\
v_{3}^{(-4,6)} \cup v_{2}^{(-4,6)} &= -2v_{3}^{(-8,12)} \\
v_{3}^{(-4,6)} \cup v_{3}^{(-4,6)} &= 4v_{8}^{(-8,12)} - 2v_{9}^{(-8,12)}
\end{align*}
\vspace*{1em}

\begin{center}
  \scalebox{1.2}{$H^{-4,6} \cup H^{-5,8} \subseteq H^{-9,14}$}
\end{center}

\begin{align*}
v_{1}^{(-4,6)} \cup v_{1}^{(-5,8)} &= -v_{2}^{(-9,14)} \\
v_{1}^{(-4,6)} \cup v_{2}^{(-5,8)} &= -v_{4}^{(-9,14)} \\
v_{1}^{(-4,6)} \cup v_{3}^{(-5,8)} &= 3v_{1}^{(-9,14)} \\
v_{1}^{(-4,6)} \cup v_{5}^{(-5,8)} &= -2v_{3}^{(-9,14)} \\
v_{1}^{(-4,6)} \cup v_{6}^{(-5,8)} &= -2v_{5}^{(-9,14)} \\
v_{2}^{(-4,6)} \cup v_{1}^{(-5,8)} &= v_{3}^{(-9,14)} \\
v_{2}^{(-4,6)} \cup v_{2}^{(-5,8)} &= -3v_{1}^{(-9,14)} \\
v_{2}^{(-4,6)} \cup v_{3}^{(-5,8)} &= -v_{6}^{(-9,14)} \\
v_{2}^{(-4,6)} \cup v_{4}^{(-5,8)} &= -2v_{2}^{(-9,14)} \\
v_{2}^{(-4,6)} \cup v_{6}^{(-5,8)} &= 2v_{7}^{(-9,14)} \\
v_{3}^{(-4,6)} \cup v_{1}^{(-5,8)} &= 3v_{1}^{(-9,14)} \\
v_{3}^{(-4,6)} \cup v_{2}^{(-5,8)} &= v_{5}^{(-9,14)} \\
v_{3}^{(-4,6)} \cup v_{3}^{(-5,8)} &= v_{7}^{(-9,14)} \\
v_{3}^{(-4,6)} \cup v_{4}^{(-5,8)} &= 2v_{4}^{(-9,14)} \\
v_{3}^{(-4,6)} \cup v_{5}^{(-5,8)} &= 2v_{6}^{(-9,14)}
\end{align*}
\vspace*{1em}

\begin{center}
  \scalebox{1.2}{$H^{-4,6} \cup H^{-6,9} \subseteq H^{-10,15}$}
\end{center}

\begin{align*}
v_{1}^{(-4,6)} \cup v_{3}^{(-6,9)} &= -3v_{2}^{(-10,15)} \\
v_{1}^{(-4,6)} \cup v_{4}^{(-6,9)} &= -2v_{1}^{(-10,15)} \\
v_{1}^{(-4,6)} \cup v_{5}^{(-6,9)} &= v_{3}^{(-10,15)} \\
v_{1}^{(-4,6)} \cup v_{6}^{(-6,9)} &= -2v_{4}^{(-10,15)} \\
v_{1}^{(-4,6)} \cup v_{7}^{(-6,9)} &= -v_{5}^{(-10,15)} \\
v_{2}^{(-4,6)} \cup v_{1}^{(-6,9)} &= -2v_{1}^{(-10,15)} \\
v_{2}^{(-4,6)} \cup v_{2}^{(-6,9)} &= -v_{2}^{(-10,15)} \\
v_{2}^{(-4,6)} \cup v_{3}^{(-6,9)} &= -3v_{3}^{(-10,15)} \\
v_{2}^{(-4,6)} \cup v_{6}^{(-6,9)} &= -v_{5}^{(-10,15)} \\
v_{2}^{(-4,6)} \cup v_{7}^{(-6,9)} &= 2v_{6}^{(-10,15)} \\
v_{3}^{(-4,6)} \cup v_{1}^{(-6,9)} &= v_{2}^{(-10,15)} \\
v_{3}^{(-4,6)} \cup v_{2}^{(-6,9)} &= -2v_{4}^{(-10,15)} \\
v_{3}^{(-4,6)} \cup v_{3}^{(-6,9)} &= -3v_{5}^{(-10,15)} \\
v_{3}^{(-4,6)} \cup v_{4}^{(-6,9)} &= v_{3}^{(-10,15)} \\
v_{3}^{(-4,6)} \cup v_{5}^{(-6,9)} &= -2v_{6}^{(-10,15)}
\end{align*}
\vspace*{1em}

\begin{center}
  \scalebox{1.2}{$H^{-4,6} \cup H^{-7,11} \subseteq H^{-11,17}$}
\end{center}

\begin{align*}
v_{1}^{(-4,6)} \cup v_{4}^{(-7,11)} &= -2v_{1}^{(-11,17)} \\
v_{1}^{(-4,6)} \cup v_{7}^{(-7,11)} &= 2v_{2}^{(-11,17)} \\
v_{1}^{(-4,6)} \cup v_{8}^{(-7,11)} &= 2v_{3}^{(-11,17)} \\
v_{1}^{(-4,6)} \cup v_{9}^{(-7,11)} &= -2v_{3}^{(-11,17)} \\
v_{2}^{(-4,6)} \cup v_{3}^{(-7,11)} &= -2v_{1}^{(-11,17)} \\
v_{2}^{(-4,6)} \cup v_{5}^{(-7,11)} &= 4v_{2}^{(-11,17)} \\
v_{2}^{(-4,6)} \cup v_{6}^{(-7,11)} &= 2v_{2}^{(-11,17)} \\
v_{2}^{(-4,6)} \cup v_{7}^{(-7,11)} &= 2v_{3}^{(-11,17)} \\
v_{3}^{(-4,6)} \cup v_{1}^{(-7,11)} &= -2v_{1}^{(-11,17)} \\
v_{3}^{(-4,6)} \cup v_{2}^{(-7,11)} &= -4v_{1}^{(-11,17)} \\
v_{3}^{(-4,6)} \cup v_{3}^{(-7,11)} &= 2v_{2}^{(-11,17)} \\
v_{3}^{(-4,6)} \cup v_{4}^{(-7,11)} &= 2v_{3}^{(-11,17)}
\end{align*}
\vspace*{1em}

\begin{center}
  \scalebox{1.2}{$H^{-4,6} \cup H^{-8,12} \subseteq H^{-12,18}$}
\end{center}

\begin{align*}
v_{1}^{(-4,6)} \cup v_{1}^{(-8,12)} &= -3v_{1}^{(-12,18)} \\
v_{1}^{(-4,6)} \cup v_{2}^{(-8,12)} &= -3v_{3}^{(-12,18)} \\
v_{1}^{(-4,6)} \cup v_{6}^{(-8,12)} &= v_{2}^{(-12,18)} \\
v_{1}^{(-4,6)} \cup v_{7}^{(-8,12)} &= -v_{2}^{(-12,18)} \\
v_{1}^{(-4,6)} \cup v_{8}^{(-8,12)} &= v_{4}^{(-12,18)} \\
v_{1}^{(-4,6)} \cup v_{9}^{(-8,12)} &= -v_{4}^{(-12,18)} \\
v_{2}^{(-4,6)} \cup v_{1}^{(-8,12)} &= 3v_{2}^{(-12,18)} \\
v_{2}^{(-4,6)} \cup v_{3}^{(-8,12)} &= 3v_{5}^{(-12,18)} \\
v_{2}^{(-4,6)} \cup v_{4}^{(-8,12)} &= 2v_{1}^{(-12,18)} \\
v_{2}^{(-4,6)} \cup v_{5}^{(-8,12)} &= v_{1}^{(-12,18)} \\
v_{2}^{(-4,6)} \cup v_{8}^{(-8,12)} &= -2v_{6}^{(-12,18)} \\
v_{2}^{(-4,6)} \cup v_{9}^{(-8,12)} &= -v_{6}^{(-12,18)} \\
v_{3}^{(-4,6)} \cup v_{2}^{(-8,12)} &= 3v_{4}^{(-12,18)} \\
v_{3}^{(-4,6)} \cup v_{3}^{(-8,12)} &= 3v_{6}^{(-12,18)} \\
v_{3}^{(-4,6)} \cup v_{4}^{(-8,12)} &= v_{3}^{(-12,18)} \\
v_{3}^{(-4,6)} \cup v_{5}^{(-8,12)} &= 2v_{3}^{(-12,18)} \\
v_{3}^{(-4,6)} \cup v_{6}^{(-8,12)} &= v_{5}^{(-12,18)} \\
v_{3}^{(-4,6)} \cup v_{7}^{(-8,12)} &= 2v_{5}^{(-12,18)}
\end{align*}
\vspace*{1em}

\begin{center}
  \scalebox{1.2}{$H^{-4,6} \cup H^{-10,15} \subseteq H^{-15,23}$}
\end{center}

\begin{align*}
v_{1}^{(-4,6)} \cup v_{3}^{(-10,15)} &= -2v_{1}^{(-14,21)} \\
v_{1}^{(-4,6)} \cup v_{5}^{(-10,15)} &= -2v_{2}^{(-14,21)} \\
v_{1}^{(-4,6)} \cup v_{6}^{(-10,15)} &= -v_{3}^{(-14,21)} \\
v_{2}^{(-4,6)} \cup v_{2}^{(-10,15)} &= -2v_{1}^{(-14,21)} \\
v_{2}^{(-4,6)} \cup v_{4}^{(-10,15)} &= -v_{2}^{(-14,21)} \\
v_{2}^{(-4,6)} \cup v_{5}^{(-10,15)} &= 2v_{3}^{(-14,21)} \\
v_{3}^{(-4,6)} \cup v_{1}^{(-10,15)} &= v_{1}^{(-14,21)} \\
v_{3}^{(-4,6)} \cup v_{2}^{(-10,15)} &= -2v_{2}^{(-14,21)} \\
v_{3}^{(-4,6)} \cup v_{3}^{(-10,15)} &= 2v_{3}^{(-14,21)}
\end{align*}
\vspace*{1em}

\begin{center}
  \scalebox{1.2}{$H^{-4,6} \cup H^{-11,17} \subseteq H^{-15,23}$}
\end{center}

\begin{align*}
v_{1}^{(-4,6)} \cup v_{3}^{(-11,17)} &= -v_{1}^{(-15,23)} \\
v_{2}^{(-4,6)} \cup v_{2}^{(-11,17)} &= -v_{1}^{(-15,23)} \\
v_{3}^{(-4,6)} \cup v_{1}^{(-11,17)} &= v_{1}^{(-15,23)}
\end{align*}
\vspace*{1em}

\begin{center}
  \scalebox{1.2}{$H^{-4,7} \cup H^{-11,16} \subseteq H^{-15,23}$}
\end{center}

\begin{align*}
v_{1}^{(-4,7)} \cup v_{3}^{(-11,16)} &= -v_{1}^{(-15,23)} \\
v_{2}^{(-4,7)} \cup v_{2}^{(-11,16)} &= -v_{1}^{(-15,23)} \\
v_{3}^{(-4,7)} \cup v_{1}^{(-11,16)} &= -v_{1}^{(-15,23)}
\end{align*}
\vspace*{1em}

\begin{center}
  \scalebox{1.2}{$H^{-5,8} \cup H^{-6,9} \subseteq H^{-11,17}$}
\end{center}

\begin{align*}
v_{1}^{(-5,8)} \cup v_{3}^{(-6,9)} &= 3v_{1}^{(-11,17)} \\
v_{1}^{(-5,8)} \cup v_{6}^{(-6,9)} &= -v_{2}^{(-11,17)} \\
v_{1}^{(-5,8)} \cup v_{7}^{(-6,9)} &= -v_{3}^{(-11,17)} \\
v_{2}^{(-5,8)} \cup v_{3}^{(-6,9)} &= 3v_{2}^{(-11,17)} \\
v_{2}^{(-5,8)} \cup v_{4}^{(-6,9)} &= v_{1}^{(-11,17)} \\
v_{2}^{(-5,8)} \cup v_{5}^{(-6,9)} &= -v_{3}^{(-11,17)} \\
v_{3}^{(-5,8)} \cup v_{1}^{(-6,9)} &= -v_{1}^{(-11,17)} \\
v_{3}^{(-5,8)} \cup v_{2}^{(-6,9)} &= -v_{2}^{(-11,17)} \\
v_{3}^{(-5,8)} \cup v_{3}^{(-6,9)} &= -3v_{3}^{(-11,17)} \\
v_{4}^{(-5,8)} \cup v_{5}^{(-6,9)} &= -2v_{1}^{(-11,17)} \\
v_{4}^{(-5,8)} \cup v_{7}^{(-6,9)} &= -2v_{2}^{(-11,17)} \\
v_{5}^{(-5,8)} \cup v_{2}^{(-6,9)} &= -2v_{1}^{(-11,17)} \\
v_{5}^{(-5,8)} \cup v_{6}^{(-6,9)} &= 2v_{3}^{(-11,17)} \\
v_{6}^{(-5,8)} \cup v_{1}^{(-6,9)} &= 2v_{2}^{(-11,17)} \\
v_{6}^{(-5,8)} \cup v_{4}^{(-6,9)} &= 2v_{3}^{(-11,17)}
\end{align*}
\vspace*{1em}

\begin{center}
  \scalebox{1.2}{$H^{-5,8} \cup H^{-10,15} \subseteq H^{-15,23}$}
\end{center}

\begin{align*}
v_{1}^{(-5,8)} \cup v_{5}^{(-10,15)} &= -v_{1}^{(-15,23)} \\
v_{2}^{(-5,8)} \cup v_{3}^{(-10,15)} &= v_{1}^{(-15,23)} \\
v_{3}^{(-5,8)} \cup v_{2}^{(-10,15)} &= -v_{1}^{(-15,23)} \\
v_{4}^{(-5,8)} \cup v_{6}^{(-10,15)} &= v_{1}^{(-15,23)} \\
v_{5}^{(-5,8)} \cup v_{4}^{(-10,15)} &= v_{1}^{(-15,23)} \\
v_{6}^{(-5,8)} \cup v_{1}^{(-10,15)} &= v_{1}^{(-15,23)}
\end{align*}
\vspace*{1em}

\begin{center}
  \scalebox{1.2}{$H^{-6,9} \cup H^{-6,9} \subseteq H^{-12,18}$}
\end{center}

\begin{align*}
v_{1}^{(-6,9)} \cup v_{3}^{(-6,9)} &= v_{1}^{(-12,18)} \\
v_{1}^{(-6,9)} \cup v_{5}^{(-6,9)} &= -v_{2}^{(-12,18)} \\
v_{1}^{(-6,9)} \cup v_{6}^{(-6,9)} &= v_{3}^{(-12,18)} \\
v_{2}^{(-6,9)} \cup v_{3}^{(-6,9)} &= v_{3}^{(-12,18)} \\
v_{2}^{(-6,9)} \cup v_{4}^{(-6,9)} &= -v_{1}^{(-12,18)} \\
v_{2}^{(-6,9)} \cup v_{7}^{(-6,9)} &= v_{4}^{(-12,18)} \\
v_{3}^{(-6,9)} \cup v_{1}^{(-6,9)} &= -v_{1}^{(-12,18)} \\
v_{3}^{(-6,9)} \cup v_{2}^{(-6,9)} &= -v_{3}^{(-12,18)} \\
v_{3}^{(-6,9)} \cup v_{4}^{(-6,9)} &= -v_{2}^{(-12,18)} \\
v_{3}^{(-6,9)} \cup v_{5}^{(-6,9)} &= -v_{5}^{(-12,18)} \\
v_{3}^{(-6,9)} \cup v_{6}^{(-6,9)} &= -v_{4}^{(-12,18)} \\
v_{3}^{(-6,9)} \cup v_{7}^{(-6,9)} &= -v_{6}^{(-12,18)} \\
v_{4}^{(-6,9)} \cup v_{2}^{(-6,9)} &= v_{1}^{(-12,18)} \\
v_{4}^{(-6,9)} \cup v_{3}^{(-6,9)} &= v_{2}^{(-12,18)} \\
v_{4}^{(-6,9)} \cup v_{7}^{(-6,9)} &= -v_{5}^{(-12,18)} \\
v_{5}^{(-6,9)} \cup v_{1}^{(-6,9)} &= v_{2}^{(-12,18)} \\
v_{5}^{(-6,9)} \cup v_{3}^{(-6,9)} &= v_{5}^{(-12,18)} \\
v_{5}^{(-6,9)} \cup v_{6}^{(-6,9)} &= -v_{6}^{(-12,18)} \\
v_{6}^{(-6,9)} \cup v_{1}^{(-6,9)} &= -v_{3}^{(-12,18)} \\
v_{6}^{(-6,9)} \cup v_{3}^{(-6,9)} &= v_{4}^{(-12,18)} \\
v_{6}^{(-6,9)} \cup v_{5}^{(-6,9)} &= v_{6}^{(-12,18)} \\
v_{7}^{(-6,9)} \cup v_{2}^{(-6,9)} &= -v_{4}^{(-12,18)} \\
v_{7}^{(-6,9)} \cup v_{3}^{(-6,9)} &= v_{6}^{(-12,18)} \\
v_{7}^{(-6,9)} \cup v_{4}^{(-6,9)} &= v_{5}^{(-12,18)}
\end{align*}
\vspace*{1em}

\begin{center}
  \scalebox{1.2}{$H^{-6,9} \cup H^{-8,12} \subseteq H^{-14,21}$}
\end{center}

\begin{align*}
v_{1}^{(-6,9)} \cup v_{3}^{(-8,12)} &= v_{1}^{(-14,21)} \\
v_{1}^{(-6,9)} \cup v_{9}^{(-8,12)} &= v_{2}^{(-14,21)} \\
v_{2}^{(-6,9)} \cup v_{3}^{(-8,12)} &= -v_{2}^{(-14,21)} \\
v_{2}^{(-6,9)} \cup v_{6}^{(-8,12)} &= v_{1}^{(-14,21)} \\
v_{3}^{(-6,9)} \cup v_{1}^{(-8,12)} &= 3v_{1}^{(-14,21)} \\
v_{3}^{(-6,9)} \cup v_{2}^{(-8,12)} &= 3v_{2}^{(-14,21)} \\
v_{3}^{(-6,9)} \cup v_{3}^{(-8,12)} &= 3v_{3}^{(-14,21)} \\
v_{4}^{(-6,9)} \cup v_{2}^{(-8,12)} &= -v_{1}^{(-14,21)} \\
v_{4}^{(-6,9)} \cup v_{8}^{(-8,12)} &= v_{3}^{(-14,21)} \\
v_{4}^{(-6,9)} \cup v_{9}^{(-8,12)} &= v_{3}^{(-14,21)} \\
v_{5}^{(-6,9)} \cup v_{2}^{(-8,12)} &= v_{3}^{(-14,21)} \\
v_{5}^{(-6,9)} \cup v_{4}^{(-8,12)} &= v_{1}^{(-14,21)} \\
v_{6}^{(-6,9)} \cup v_{1}^{(-8,12)} &= v_{2}^{(-14,21)} \\
v_{6}^{(-6,9)} \cup v_{6}^{(-8,12)} &= v_{3}^{(-14,21)} \\
v_{6}^{(-6,9)} \cup v_{7}^{(-8,12)} &= v_{3}^{(-14,21)} \\
v_{7}^{(-6,9)} \cup v_{1}^{(-8,12)} &= -v_{3}^{(-14,21)} \\
v_{7}^{(-6,9)} \cup v_{5}^{(-8,12)} &= -v_{2}^{(-14,21)}
\end{align*}
\vspace*{1em}

\clearpage

\begin{center}
  \scalebox{1.2}{$H^{-6,9} \cup H^{-9,14} \subseteq H^{-15,23}$}
\end{center}

\begin{align*}
v_{1}^{(-6,9)} \cup v_{7}^{(-9,14)} &= v_{1}^{(-15,23)} \\
v_{2}^{(-6,9)} \cup v_{6}^{(-9,14)} &= v_{1}^{(-15,23)} \\
v_{3}^{(-6,9)} \cup v_{1}^{(-9,14)} &= -v_{1}^{(-15,23)} \\
v_{4}^{(-6,9)} \cup v_{5}^{(-9,14)} &= -v_{1}^{(-15,23)} \\
v_{5}^{(-6,9)} \cup v_{4}^{(-9,14)} &= v_{1}^{(-15,23)} \\
v_{6}^{(-6,9)} \cup v_{3}^{(-9,14)} &= -v_{1}^{(-15,23)} \\
v_{7}^{(-6,9)} \cup v_{2}^{(-9,14)} &= v_{1}^{(-15,23)}
\end{align*}
\vspace*{1em}

\begin{center}
  \scalebox{1.2}{$H^{-7,11} \cup H^{-8,12} \subseteq H^{-15,23}$}
\end{center}

\begin{align*}
v_{1}^{(-7,11)} \cup v_{9}^{(-8,12)} &= v_{1}^{(-15,23)} \\
v_{2}^{(-7,11)} \cup v_{8}^{(-8,12)} &= -v_{1}^{(-15,23)} \\
v_{3}^{(-7,11)} \cup v_{3}^{(-8,12)} &= v_{1}^{(-15,23)} \\
v_{4}^{(-7,11)} \cup v_{2}^{(-8,12)} &= -v_{1}^{(-15,23)} \\
v_{5}^{(-7,11)} \cup v_{7}^{(-8,12)} &= v_{1}^{(-15,23)} \\
v_{6}^{(-7,11)} \cup v_{6}^{(-8,12)} &= -v_{1}^{(-15,23)} \\
v_{7}^{(-7,11)} \cup v_{1}^{(-8,12)} &= -v_{1}^{(-15,23)} \\
v_{8}^{(-7,11)} \cup v_{5}^{(-8,12)} &= v_{1}^{(-15,23)} \\
v_{9}^{(-7,11)} \cup v_{4}^{(-8,12)} &= -v_{1}^{(-15,23)}
\end{align*}


\end{multicols}


\end{document}